\documentclass[amssymb,amscd,amsfonts,refcheck,11pt,graphicx,verbatim,righttag]{amsart}

\usepackage{amsmath,amscd, amsthm, amsfonts, amssymb,mathrsfs,multirow,graphicx,color}
\usepackage{bm}
\usepackage{amsfonts}
\usepackage{amssymb}
\usepackage[OT2,OT1]{fontenc}
\usepackage{caption}
\usepackage{longtable}
\usepackage[colorlinks,
            linkcolor=blue,
            anchorcolor=blue,
            citecolor=blue
            ]{hyperref}

\setlength{\textwidth}{15.0cm}
 \setlength{\textheight}{22.0cm}
 \hoffset=-1cm
 \errorcontextlines=0
 \allowdisplaybreaks[4]
\pagestyle{plain}
\parskip 1.0ex
\theoremstyle{plain}

\newcommand{\N}{{\mathbb N}}

\newcommand{\Z}{{\mathbb Z}}
\newcommand{\R}{{\mathbb R}}

\newcommand{\Q}{{\mathbb Q}}

\newcommand{\E}{{\mathcal E}}
\newcommand{\B}{{\mathcal B}}

\newcommand{\T}{{\mathbb T}}

\parindent=1em
\allowdisplaybreaks

\newtheorem{thm}{Theorem}[section]
\newtheorem{cor}[thm]{Corollary}
\newtheorem{prop}[thm]{Proposition}
\newtheorem{rem}[thm]{Remark}
\newtheorem{lem}[thm]{Lemma}
\newtheorem{defi}[thm]{Definition}
\newcommand{\1}{\mathbf{1}}
\newcommand{\2}{\mathbf{2}}

\title{Cantor spectrum for multidimensional quasi-periodic Schr\"odinger operators}
\author{Bernard Helffer}
\address{Laboratoire
de Math\'ematiques Jean Leray,  Nantes Universit\'e  and CNRS, 44 000
Nantes Cedex (France).}
\email{Bernard.Helffer@univ-nantes.fr}

\author{Qinghui LIU}
\address{
Department of Mathematics,
Beijing Institute of Technology,
Beijing 100081, P.R. China.}
\email{qhliu@bit.edu.cn}

\author{Yanhui QU}
\address{Department of Mathematical Science, Tsinghua University, Beijing 100084, P.R. China.}
\email{yhqu@tsinghua.edu.cn}

\author{Qi Zhou}
\address{
Chern Institute of Mathematics and LPMC, Nankai University, Tianjin 300071, P.R. China.
}

 \email{qizhou@nankai.edu.cn}

\begin{document}

\maketitle

\begin{abstract}
In this paper, we investigate the spectrum of a class of multidimensional quasi-periodic Schr\"odinger operators that exhibit a Cantor spectrum, which provides a resolution to a question posed by Damanik, Fillman, and Gorodetski \cite{DFG}. Additionally, we  prove  that for  a dense set of irrational frequencies with positive Hausdorff dimension, the Hausdorff (and upper box) dimension of the spectrum of the critical almost Mathieu operator is positive, yet can be made arbitrarily small.
\end{abstract}

\section{Introduction}

The focus of our research is the spectrum of discrete multidimensional quasi-periodic Schr\"odinger operators on $\ell^2(\Z^d)$, given by
\begin{align}\label{defH}
	(H_{ \lambda V}u)({\bm{n}})=\sum_{|\bm{m}-\bm{n}|=1}u({\bm{m}})+ \lambda  V(\vec{\theta}+  \bm{n} \vec\alpha)u({\bm{n}}),
\end{align}
where $|\bm{m}-\bm{n}|=\sum_{i=1}^d |m_i-n_i|$, $\lambda\in \R$ is the coupling constant.
Here, we assume that the potential $V: \T^d \rightarrow \R$ is continuous, and $\vec{\alpha}=(\alpha_1,\cdots,\alpha_d)$ with $(1,\vec{\alpha})$ rationally independent is the frequency, and denote  $\bm{n} \vec{ \alpha}=(n_1\alpha_1,\cdots,n_d\alpha_d).$
Our main interest lies in determining whether \eqref{defH} possesses Cantor spectrum.

\subsection{Cantor Spectrum for 1D Schr\"odinger operators}

When $ d=1 $, the most prominent example of the operator defined in \eqref{defH} with a Cantor spectrum is the Almost Mathieu Operator (AMO)  $H_{\lambda,\alpha,\theta}$, which is defined on $\ell^2(\mathbb{Z})$ as follows:
\begin{equation}\label{schro}
(H_{\lambda,\alpha,\theta} u)_n = u_{n+1} + u_{n-1} + 2\lambda \cos 2\pi  (n\alpha + \theta) u_n.
\end{equation}
The AMO was originally introduced by Peierls \cite{Pe} as a model for an electron on a two-dimensional lattice subjected to a homogeneous magnetic field \cite{Ha,R}. The famous ``Ten Martini Problem" \cite{Kac,simon} asserts that $ H_{\lambda,\alpha,\theta} $ has a Cantor spectrum for any $ \lambda \neq 0 $ and $ \alpha \in \mathbb{R} \setminus \mathbb{Q} $. This was ultimately proven by Avila and Jitomirskaya \cite{AJ}, with additional contributions from earlier studies \cite{AK06,BS1,CEY,HS3,last3,P}.
Building on methods developed to prove quantitative aspects of Avila’s global theory \cite{avila,GJYZ}, it was recently shown that any small analytic perturbation of AMO retains Cantor spectrum \cite{GJY1,GJY2}.
However, for AMO, it is still open whether ``Dry Ten Martini Problem" (all the spectral gaps are open) holds \cite{AYZ,Kac,simon}. Furthermore, whether ``Dry Ten Martini Problem"  is stable under perturbations, akin to the Cantor spectrum stability demonstrated in \cite{GJY1}, is still widely unresolved.

For more general potentials $ V $, Simon \cite{simon} conjectured that the operator defined in \eqref{defH} generically exhibits a Cantor spectrum. Historically, Eliasson \cite{eliasson:1992} proved that for any fixed Diophantine frequency $ \alpha \in \mathbb{R} \backslash \mathbb{Q} $ and for generic small analytic potentials, \eqref{defH} possesses a Cantor spectrum. Goldstein and Schlag \cite{GS11} established that for generic $ \alpha \in \mathbb{R} \setminus \mathbb{Q} $ in the region of positive Lyapunov exponents, the spectrum forms a Cantor set. Avila-Bochi-Damanik \cite{ABD} showed that for any fixed $ \alpha \in \mathbb{R} \setminus \mathbb{Q} $ and any generic $ V \in C^0(\mathbb{T},\mathbb{R}) $ in the $ C^0 $-topology, the spectrum is also Cantorian \cite{ABD}. In the $ C^k $-topology (for $ 1 \leq k \leq \infty $ or even under the analytic category), it has been proven that for generic $ \alpha \in \mathbb{R} \setminus \mathbb{Q} $ and generic $ V \in C^k(\mathbb{T},\mathbb{R}) $, the spectrum of \eqref{defH} is a Cantor set, for an outline of the proof, we refer to footnote 1 in \cite{WZJ}.

\subsection{ Cantor Spectrum for Multidimensional Schr\"odinger operator}

For $ d \geq 2 $, it is reasonable to anticipate that the Cantor spectrum is a phenomenon exclusive to one dimension. A related conjecture, known as the Bethe-Sommerfeld conjecture, posits that for $ d \geq 2 $ and any periodic function $ V: \mathbb{R}^d \rightarrow \mathbb{R} $, the spectrum of the continuous Schr\"odinger operator $ -\Delta + V $ contains only finitely many gaps; that is, there are no gaps at high energy levels. This conjecture has received extensive studies over the years and was proved  by Parnovski \cite{Pa}. For a more detailed discussion, please refer to \cite{K,Pa} and their references. Additionally, the discrete version of the Bethe-Sommerfeld conjecture has also been resolved recently \cite{EF,HJ}.
More recently, the resolution of the Bethe-Sommerfeld conjecture has been extended to quasiperiodic (not necessarily separable) multidimensional continuous Schr\"odinger operators for almost all frequencies \cite{KPS,KS}.

 In high-energy regimes, continuous Schr\"odinger operators can be treated as perturbations of the free Laplacian. Thus, in the discrete framework, it is plausible to expect that when $ \lambda $ is small, the spectrum of \eqref{defH} forms an interval. This was recently proved by Takase \cite{T} for cases where $ V(\vec\theta) $ is separable (i.e., $ V(\vec\theta) = V_1(\theta_1) + \cdots + V_d(\theta_d) $). These findings elucidate the behavior under conditions of small potential.

However, if $ \lambda $ is large (or if the potential is large), one would still expect the spectrum contains an interval. This expectation is reinforced by the groundbreaking findings of Goldstein-Schlag-Voda \cite{GSV}, who constructed a class of real-analytic functions $ W $ on $ \mathbb{T}^d $ (considered to represent a generic condition) in which, if the frequency is Diophantine and $ \lambda $ is sufficiently large, the spectrum of the one-dimensional operator
\begin{equation}
(H_W u)_n = u_{n+1} + u_{n-1} + \lambda W(\vec\theta+n\vec\alpha) u_n,
\end{equation}
comprises a single interval. This indicates that in high-dimensional cases, if $ V(\vec\theta) $ is degenerate or separable and $ \lambda $ is large, then the spectrum of \eqref{defH} is likely an interval.

Based on these discussions, one might conclude that the Cantor spectrum is predominantly a one-dimensional phenomenon. However, Damanik-Fillman-Gorodetski \cite{DFG} constructed a class of multidimensional limit-periodic Schr\"odinger operators whose spectrum is a Cantor set, spurring interest in identifying a quasiperiodic example. This task is not trivial; the spectral behavior of limit-periodic Schr\"odinger operators diverges significantly from that of quasiperiodic operators \cite{DF}, making the direct application of \cite{DFG}'s methods to quasiperiodic settings challenging. Nonetheless, through Aubry duality \cite{AA80} and Eliasson's results \cite{eliasson:1992}, one can readily construct the long-range operator
\begin{equation}\label{longgeneral}
(Hu)_{\bm{n}} = (Lu)_{\bm{n}} + (Vu)_{\bm{n}} = \sum_{\bm{k} \in \mathbb{Z}^d} V_{\bm{k}} u_{\bm{n} - \bm{k}} + 2\lambda \cos 2\pi(\theta + \langle \bm{n}, \vec\alpha \rangle) u_{\bm{n}},
\end{equation}
which possesses a Cantor spectrum when $ V_{\bm{k}} $ decays exponentially fast. However, the challenge remains in the Schr\"odinger case. Indeed, as expressed by Damanik, Fillman, and Gorodetski \cite{DFG}:

\begin{quote}
``It would be interesting to construct examples of this type having zero-measure Cantor spectrum in which \( L \) is the \( d \)-dimensional discrete Laplacian and \( V \) is multiplication by a suitable quasiperiodic function.''
\end{quote}

In this paper, we aim to address this question by analyzing the multidimensional Almost Mathieu Operator (AMO): \begin{align}\label{defH1} (M_{\lambda \cos, \vec{\alpha}}\, u)({\bm{n}}) = \sum_{|\bm{m}-\bm{n}|=1} u({\bm{m}}) + 2\lambda \left( \sum_{j=1}^{d} \cos 2\pi (\theta_j + n_j \alpha_j) \right) u({\bm{n}}), \end{align} as studied by Bourgain \cite{B}. This operator is also referred to as the multidimensional Aubry-Andr\'e model in the physical literature, and it has attracted  significant interest due to its localization properties, which are found to be substantially more complex compared to the one-dimensional case \cite{BLH,JOD,SS}. Below, we present our main result:

\begin{thm}\label{main}
	For a dense and positive Hausdorff dimension set of $\vec{\alpha}=(\alpha_1,\cdots, \alpha_d) \in \T^d$, the spectrum of $M_{\cos,\vec{\alpha}}$ is a Cantor set of zero Lebesgue measure.
\end{thm}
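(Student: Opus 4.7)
The plan is to reduce the multidimensional problem to the one-dimensional critical almost Mathieu operator via the separability of both the Laplacian and the potential. Under the identification $\ell^2(\Z^d)\cong\bigotimes_{j=1}^d\ell^2(\Z)$, the discrete Laplacian splits as $\sum_{j} I\otimes\cdots\otimes\Delta_1\otimes\cdots\otimes I$, while the potential $V(\vec\theta)=\sum_{j}\cos 2\pi\theta_j$ is also separable. Hence $M_{\cos,\vec\alpha}$ is unitarily equivalent to
\[
\sum_{j=1}^{d} I\otimes\cdots\otimes H_{\alpha_j}\otimes\cdots\otimes I,
\]
where $H_{\alpha_j}$ is the one-dimensional critical AMO from \eqref{schro} (with $\lambda=1$) at frequency $\alpha_j$. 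Because these summands commute and act on independent tensor factors, their joint spectrum is the full Cartesian product, and so
\[
\sigma(M_{\cos,\vec\alpha})=\sigma(H_{\alpha_1})+\cdots+\sigma(H_{\alpha_d})
\]
as a Minkowski sum (independent of $\vec\theta$ by minimality of the $\vec\alpha$-translation on $\T^d$).

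The central ingredient is then the one-dimensional dimensional estimate announced in the abstract: there is a dense set $\mathcal A\subset\R\setminus\Q$ of positive Hausdorff dimension such that, for any prescribed $\epsilon>0$, there exists $\alpha\in\mathcal A$ with
\[
0<\dim_H\sigma(H_\alpha)\leq\overline{\dim}_B\sigma(H_\alpha)<\epsilon.
\]
The natural strategy is a periodic-approximation/continued-fraction construction: for the convergents $p_n/q_n\to\alpha$, the critical AMO at the rational frequency $p_n/q_n$ is a finite-band operator whose total bandwidth obeys the sharp Last-type upper bound tending to $0$ with $q_n$. Combined with the H\"older continuity of $\sigma(H_\alpha)$ in $\alpha$, this yields efficient covers of $\sigma(H_\alpha)$ at each scale $q_n$. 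A Cantor-type construction in frequency space, taking sufficiently large partial quotients at each stage, then produces a dense positive-Hausdorff-dimension set $\mathcal A$ on which $\overline{\dim}_B\sigma(H_\alpha)$ can be driven arbitrarily small.

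With this input in hand, fix $\epsilon<1/d$ and pick $\alpha_1,\ldots,\alpha_d\in\mathcal A$ with $\overline{\dim}_B\sigma(H_{\alpha_j})<\epsilon$. Since the Minkowski sum is a Lipschitz image of the Cartesian product,
\[
\dim_H\sigma(M_{\cos,\vec\alpha})\leq\overline{\dim}_B\Bigl(\prod_{j=1}^{d}\sigma(H_{\alpha_j})\Bigr)\leq\sum_{j=1}^{d}\overline{\dim}_B\sigma(H_{\alpha_j})<1,
\]
which forces $|\sigma(M_{\cos,\vec\alpha})|=0$. Compactness together with measure zero yields empty interior, hence nowhere-density. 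Each $\sigma(H_{\alpha_j})$ is perfect (being Cantor by the Ten Martini theorem), and Minkowski sums of compact perfect subsets of $\R$ remain perfect, so $\sigma(M_{\cos,\vec\alpha})$ is a nonempty compact perfect nowhere-dense subset of $\R$, i.e., a Cantor set of zero Lebesgue measure. Finally, the admissible $\vec\alpha$'s contain $\mathcal A^d\subset\T^d$, a dense set of Hausdorff dimension at least $d\cdot\dim_H\mathcal A>0$.

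The principal obstacle is the one-dimensional estimate in the second paragraph. Zero Lebesgue measure of $\sigma(H_\alpha)$ at critical coupling is classical, but simultaneously making $\overline{\dim}_B\sigma(H_\alpha)$ arbitrarily small \emph{while} retaining a \emph{dense} admissible set of \emph{positive} Hausdorff dimension is genuinely delicate: pushing the dimension down requires very fast Liouvillean growth of $(q_n)$, which tends to thin out the arithmetic flexibility needed to keep the frequency set large. Balancing these competing demands via a multiscale construction tuned to the sharp bandwidth asymptotics of the rational critical AMO is where the technical heart of the argument lies.
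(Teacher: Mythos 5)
Your reduction of the $d$-dimensional problem to the critical one-dimensional AMO matches the paper's own route: the Minkowski-sum decomposition $\sigma(M_{\cos,\vec\alpha})=\Sigma_{\alpha_1}+\cdots+\Sigma_{\alpha_d}$ (the paper cites \cite{DG}, you derive it from the tensor-product structure, but this is the same fact), the bound $\overline{\dim}_B(A_1+\cdots+A_d)\le\sum_j\overline{\dim}_B A_j$ via a Lipschitz image of the Cartesian product, and the lower bound $\dim_H(\mathscr A^d)\ge d\,\dim_H\mathscr A$ for the admissible frequency set. Your Cantor-set argument (perfectness of a Minkowski sum of perfect compacta, plus measure zero implying empty interior) is slightly more explicit than the paper's but correct. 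This part of the proposal is fine and is essentially the paper's Section~2.

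The genuine gap is in your sketch of the one-dimensional input (the paper's Theorem \ref{main-3}). You propose rational approximation: Last's $O(1/q_n)$ bandwidth bound at $p_n/q_n$ plus H\"older continuity of $\alpha\mapsto\Sigma_\alpha$, driven by ``very fast Liouvillean growth'' of $(q_n)$. That is exactly the Last--Shamis/Avila--Last--Shamis--Zhou mechanism, and it does control the \emph{Hausdorff} dimension (for $\beta(\alpha)>0$ it gives $\dim_H\Sigma_\alpha=0$). But it \emph{cannot} give the upper box dimension bound you actually need: Jitomirskaya--Zhang \cite{JZ} prove that $\beta(\alpha)>0$ forces $\overline{\dim}_B\Sigma_\alpha=1$. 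And your Minkowski-sum step genuinely requires box dimension --- the inequality $\dim_H(A+B)\le\dim_H A+\dim_H B$ is false in general; at most one factor of the product estimate can be traded for Hausdorff dimension. So the tension you flag at the end is misdiagnosed: the obstruction is not that Liouvillean growth ``thins out'' the frequency set, it is that Liouvillean growth drives $\overline{\dim}_B\Sigma_\alpha$ up to $1$, destroying the estimate.

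The paper's actual route to Theorem \ref{main-3} is orthogonal to periodic approximation. It uses the Helffer--Sj\"ostrand semiclassical renormalization of the critical AMO to build a nested covering $\{\tilde{\mathcal B}_n\}$ with multi-scale metrical control of band lengths at every level, valid precisely when the partial quotients $a_n$ are \emph{large but uniformly bounded} (so $\beta(\alpha)=0$). Boundedness of the $a_n$ is what keeps the minimal band-to-parent ratio bounded below and makes the covering-number estimate, hence the upper box dimension bound, go through; compare Proposition \ref{basic-H-B}, where part (1) (Hausdorff) needs only $a_n\ge L$ while part (2) (upper box) needs $L\le a_n\le\gamma L$. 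So the correct choice of frequencies is nearly opposite to the one in your sketch, and the required covering analysis comes from \cite{HS1,HS2,HS3}, not from bandwidth bounds and H\"older continuity.
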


\begin{rem} Before delving into the  idea of proofs, we provide some remarks regarding Theorem \ref{main}.
	\begin{enumerate}
\item To the best knowledge of the authors, Theorem \ref{main} presents the first example of multidimensional discrete quasiperiodic  Schr\"odinger operators exhibiting Cantor spectra.
	\item If $\lambda \neq 1$, then the spectrum of the operator $H_{\lambda, \alpha,\theta}$ has positive Lebesgue measure \cite{AMS}. Consequently, Steinhaus's Theorem asserts that the spectrum of $M_{\lambda \cos, \vec{\alpha}}$ possesses a dense interior. However, Theorem \ref{main} indicates that this dense interior disappears simultaneously as $\lambda \rightarrow 1$.
\item  Bourgain \cite{B} proved the existence of a positive measure set $\vec{\alpha} \in \mathbb{T}^d$ such that for any $\lambda \neq 0$, the operator $M_{\lambda\cos, \vec{\alpha}}$ possesses spectral gaps. Theorem \ref{main} implies that for a dense subset of $\vec{\alpha} \in \mathbb{T}^d$, the spectral gaps are also dense, if $\lambda=1$.
\item  Takase \cite{T} established that if $\lambda$ is sufficiently small, and $\vec{\alpha}$ is Diophantine,  then the spectrum of $M_{\lambda \cos, \vec{\alpha}}$ constitutes a single interval. This leads to an intriguing question: does there exist a value $\lambda_*$ such that for $\lambda < \lambda_*$, the spectrum forms an interval, while for $\lambda_* < \lambda < 1$, the spectrum becomes a Cantorval? Here, a Cantorval is defined as a nonempty compact subset $\Sigma$ of $\mathbb{R}$ that lacks isolated connected components and contains a dense interior.
		\end{enumerate}
	
\end{rem}

\subsection{Fractal Dimensions of the spectrum of the Critical AMO}

Now we consider the critical AMO:
 \begin{equation}\label{schro-1}
(H_{1,\alpha,\theta} u)_n = u_{n+1} + u_{n-1} + 2 \cos  2\pi(n\alpha + \theta) u_n,
\end{equation}
which is also known as Harper or Azbel-Hofstadter model in physical literature \cite{AOS,Ha}. When $\alpha$ is irrational, the spectrum does not depend on $\theta$ and we denote the spectrum as $\Sigma_{\alpha}$. The spectrum is  beautifully described
via the Hofstadter butterfly \cite{HO}.

Regarding the finer fractal properties of $\Sigma_{\alpha}$, it was widely believed until the mid-1990s that the box dimension $\dim_B\Sigma_{\alpha}$ is equal to $\frac{1}{2}$ for almost all $\alpha$. Numerical and heuristic evidence supporting this conjecture can be found in \cite{conjhalf2, conjhalf3, conjhalf1}. The upper (resp. lower) box dimension, denoted $\overline{\dim}_{B}S$ (resp. $\underline{\dim}_ BS$),
 of a bounded set $S \subseteq \mathbb{R}$ is defined as follows: $$
\overline{\dim}_{B}S = \limsup_{r \downarrow 0} \frac{\log N_r( S)}{\log(r^{-1})}, \qquad \underline{\dim}_{ B} S = \liminf_{r \downarrow 0} \frac{\log N_r(S)}{\log(r^{-1})},
$$ where $N_r(S)$ represents the minimum number of intervals of length $r$ required to cover $S$. When $\overline{\dim}_{ B}S=\underline{\dim}_{ B}S$, we call the common value the box dimension of $S$ and denote by $\dim_B S.$
In 1994, Wilkinson and Austin \cite{wilkinson_austin} provided numerical evidence that $\dim_B\Sigma_{\alpha} = 0.498$ for $\alpha = \frac{\sqrt{5}-1}{2}$, thereby conjecturing that $\overline{\dim}_B\Sigma_{\alpha} < \frac{1}{2}$ for every irrational $\alpha$. However, Jitomirskaya and Zhang \cite{JZ} disproved this conjecture by demonstrating that if $\beta(\alpha) > 0$, then $\overline{\dim}_B\Sigma_{\alpha} = 1$. Here, $\beta(\alpha)$ measures the Liouvillean property of $\alpha$ and is defined as follows:
\begin{equation}\label{defbeta}
\beta(\alpha) := \limsup_{n \rightarrow \infty} \frac{\log q_{n+1}(\alpha)}{q_n(\alpha)},
\end{equation} where $q_n(\alpha)$ denotes the denominator of the $n$-th convergent of $\alpha$.

Considering these results, a natural question arises: \\\textit{If $\beta(\alpha) = 0$, what is the box dimension of the spectrum of the critical AMO?}

In this paper, we will answer  this question as follows:

\begin{thm}\label{main-2}
	For any $\delta\in (0,1)$, there exists a dense and positive Hausdorff dimension set of $\alpha \in \R\backslash \Q$ with $\beta(\alpha)=0$, such that the spectrum $\Sigma_\alpha$ of $H_{1,\alpha,\theta}$ satisfies
	$$0< \dim_H\Sigma_{\alpha}\leq \overline \dim_B \, \Sigma_{\alpha}  \le\delta.$$
\end{thm}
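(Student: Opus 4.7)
The plan is to construct $\alpha$ via its continued fraction expansion $\alpha=[a_1,a_2,\ldots]$ with convergent denominators $q_n$, tuning the growth of $a_n$ so that the critical AMO spectrum $\Sigma_\alpha$ has upper box dimension at most $\delta$ while retaining positive Hausdorff dimension, and $\beta(\alpha)=0$. Three classical inputs at $\lambda=1$ will drive the argument: (a) the Avron--van Mouche--Simon H\"older-$\tfrac12$ estimate $d_H(\Sigma_\alpha,\Sigma_{\alpha'})\le C|\alpha-\alpha'|^{1/2}$; (b) Last's measure bound $|\Sigma_{p/q}|\le C_0/q$ for $p/q$ in lowest terms; (c) the band decomposition $\Sigma_{p/q}=\bigcup_{k=1}^{q}I_k$ into $q$ disjoint closed intervals, together with effective lower bounds on a definite proportion of the intervening gaps (from gap labelling and the rational Ten Martini theorem).

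For the upper bound on $\overline\dim_B\Sigma_\alpha$: given a scale $r>0$, choose $n=n(r)$ so that $\eps_n:=C/\sqrt{q_n q_{n+1}}\le r$. By (a), $\Sigma_\alpha$ lies in the $\eps_n$-neighborhood of $\Sigma_{p_n/q_n}$, and covering each of the $q_n$ bands inflated by $\eps_n$ with length-$r$ intervals yields the baseline bound
\begin{equation*}
N_r(\Sigma_\alpha)\ \le\ C\Bigl(q_n+\tfrac{1}{q_n\,r}\Bigr).
\end{equation*}
Iterating this across several consecutive levels of the continued fraction tower and optimizing the choice of $n(r)$ scale by scale will sharpen this to $\log N_r/\log r^{-1}\le O(1/\tau_n)$ at the natural scales $r\asymp q_n^{-(1+\tau_n)}$, where $\tau_n:=\log q_{n+1}/\log q_n-1$; Lipschitz-in-$\log r$ control of $N_r$ then covers intermediate scales. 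Prescribing $a_{n+1}\approx q_n^{T}$ with $T=T(\delta)$ large enough forces $\tau_n\ge T-1$, hence $\overline\dim_B\Sigma_\alpha\le\delta$.

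For the lower bound on $\dim_H\Sigma_\alpha$ and on the frequency set: by (c), $\Sigma_{p_n/q_n}$ contains at least $\gamma q_n$ bands separated by gaps of diameter $\ge\eta q_n^{-A}$ for absolute constants $\gamma,\eta,A>0$, and by (a) these separations persist in $\Sigma_\alpha$. Iterating yields a nested Cantor family $K_n\subset\Sigma_\alpha$ consisting of $\ge\gamma q_n$ disjoint intervals of diameter $\le q_n^{-(1+\tau_n-o(1))}$, each carrying $\ge\gamma q_{n+1}/q_n$ children; the mass distribution principle applied to $K=\bigcap_n K_n$ delivers $\dim_H K\ge c(\delta)>0$. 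Density of admissible $\alpha$ is immediate by prescribing arbitrary initial blocks $[a_1,\ldots,a_N]$. Letting $a_{n+1}$ range freely over $[\lfloor q_n^{T}\rfloor,2\lfloor q_n^{T}\rfloor]$ at each step, a standard Jarn\'ik-type computation gives the resulting set of $\alpha$ positive Hausdorff dimension in $\R$. Since $\log a_{n+1}/q_n=O(T\log q_n/q_n)\to 0$, one has $\beta(\alpha)=0$.

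The main obstacle is pushing the naive single-scale bound $\overline\dim_B\Sigma_\alpha\le 1$ (which is sharp in the Liouville regime by Jitomirskaya--Zhang) down to an arbitrarily small $\delta$. This requires iterating the H\"older continuity estimate (a) across many consecutive levels of the continued fraction tower, and using genuinely critical-coupling information on the individual band widths of $\Sigma_{p_n/q_n}$ rather than merely their total measure. Furthermore, the multi-scale upper bound has to remain compatible with the lower bound $\dim_H K>0$, while the constraint $\beta(\alpha)=0$ prohibits overly fast growth of $a_{n+1}$; the tight combinatorial tuning of $\tau_n$ that meets all three requirements simultaneously is the principal technical challenge.
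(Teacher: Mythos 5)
Your plan is sketched in a different style from the paper, but it contains a genuine gap that I don't see how to close with only the tools you invoke.

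The central problem is with the upper bound on $\overline{\dim}_B\Sigma_\alpha$. Your baseline estimate $N_r(\Sigma_\alpha)\lesssim q_n+\frac{1}{q_n r}$, derived from H\"older-$\tfrac12$ continuity and the total-measure bound $|\Sigma_{p_n/q_n}|\le C_0/q_n$, can be optimized over $n$ (subject to the constraint $1/\sqrt{q_nq_{n+1}}\lesssim r$), and the best one can extract from it is $N_r\lesssim r^{-1/2}$, hence $\overline{\dim}_B\Sigma_\alpha\le\tfrac12$. This is essentially the Jitomirskaya--Krasovsky $\tfrac12$-bound, and it saturates at $\tau_n\approx 2$: taking $\tau_n$ larger does not help (your stated natural scale $r\asymp q_n^{-(1+\tau_n)}\asymp q_{n+1}^{-1}$ is below $\epsilon_n$, so level $n$ cannot be used there and level $n+1$ only yields $N_r\lesssim q_{n+1}\asymp r^{-1}$). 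To push below $\tfrac12$ one needs to know, band by band, how $\Sigma_\alpha\cap I_k$ itself decomposes at the next renormalization level --- that is, one needs the widths and positions of the sub-bands inside each band $I_k$, not merely that $\Sigma_\alpha$ sits in an $\epsilon_n$-neighborhood of $\Sigma_{p_n/q_n}$ and that the total measure is small. H\"older continuity is a single global estimate and does not localize; ``iterating across consecutive levels'' in the way you describe is not a well-defined operation with these inputs. The paper's proof goes through precisely because the Helffer--Sj\"ostrand semiclassical renormalization ([HS1, HS2, HS3]) supplies exactly this missing datum: a nested covering structure (encoded here as the language $\Omega_*$ and configurations of Section 3) with explicit, multi-scale control of $|I_{we}|/|I_w|$ at every level. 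Theorem~\ref{abstract-dim-bd} and Lemma~\ref{pre-dim-delta}(3) then show that the sum $\sum_{J}(|J|/|I|)^\delta$ can be made $\le 1$ at each renormalization step, which is what forces the dimension below an arbitrary $\delta$.

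A second, related mismatch: you propose $a_{n+1}\approx q_n^T$, so that $a_n$ grows super-exponentially (while $\beta(\alpha)=0$ is still correctly verified). The paper instead keeps $a_n$ in a \emph{bounded} window $[L,\gamma L]$ (Proposition~\ref{basic-H-B}(2)); this is essential, because the box-dimension estimate needs a uniform lower bound on $h_n(\alpha)$ (equivalently, a lower bound on the ratio between the shortest sub-band and its parent) across all levels --- see Theorem~\ref{abstract-dim-bd}(2). With $a_n$ unbounded, the smallest sub-band ratios degenerate and the covering count $N_r$ is no longer controlled at all scales $r$; the Hausdorff dimension bound would survive, but the box dimension bound would not. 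Finally, your lower-bound sketch (mass distribution on a Cantor subfamily with $\ge\gamma q_{n+1}/q_n$ children per band) ignores the extreme inhomogeneity of band sizes inside a single $\Sigma_{p_n/q_n}$ (lengths range from $\sim h$ down to $e^{-c/h}$); the paper instead invokes the proof of positive Hausdorff dimension from [HLQZ], which was built exactly to handle this inhomogeneity and uses the same semiclassical covering structure. And for $\beta(\alpha)=0$, the paper gets it for free from [JZ]: $\overline{\dim}_B\Sigma_\alpha\le\delta<1$ already implies $\beta(\alpha)=0$; your direct verification is also fine.

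In short: your route replaces the heart of the proof --- the Helffer--Sj\"ostrand renormalization and the quantitative multi-scale band estimates of Section 3 and Appendix B --- with soft continuity and measure bounds that cap the achievable upper box dimension at $\tfrac12$ and cannot be iterated to go lower. The approach, as written, cannot prove $\overline{\dim}_B\Sigma_\alpha\le\delta$ for $\delta<\tfrac12$.
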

Here the Hausdorff dimension of a set $S \subset \mathbb{R}$ is defined as follows: $$
\dim_H S = \inf \left\{ t \in \mathbb{R^{+}} \, \big{|} \, \lim_{\epsilon \rightarrow 0} \inf_{\epsilon \text{-covers}} \sum_{n} |U_n|^t < \infty \right\},
$$ where a $\epsilon$-cover of $S$ is a family $(U_n)_n$ such that $S \subset \cup_{n=1}^{\infty} U_n$, and each $U_n$ is an interval of length smaller than $\epsilon$; $|A|$ denotes the Lebesgue measure of $A\subset\R$.
Indeed, in recent years, there has been an increasing interest in determining the Hausdorff dimension of $\Sigma_{\alpha}$.
Around 1995, J. Bellissard\footnote{Private conversation with Y. Last.} conjectured that there exists some $\kappa \in (0, 1/2]$ such that $\dim_H \Sigma_{\alpha} = \kappa$ for almost every $\alpha$. Recently, B.~Simon included the problem of determining the Hausdorff dimension of the spectrum of the critical almost Mathieu operator in his new list of significant unsolved problems \cite{BS}.

Let us  review the advancements related to these conjectures and provide comments on Theorem \ref{main-2}. We introduce  the set
$$
\mathcal{L} := \{\alpha \in \mathbb{R} \setminus \mathbb{Q} : \beta(\alpha) > 0\}.
$$
In 1994, Last proved that for a dense subset containing $\mathcal{L}$, the Hausdorff dimension of $\Sigma_{\alpha}$ satisfies $\dim_H \Sigma_{\alpha} \leq \frac{1}{2}$. This conclusion was recently strengthened by Jitomirskaya and Krasovsky \cite{JK} to include all $\alpha$.  Last and Shamis \cite{LS} showed that there exists a dense subset of $\mathcal{L}$ for which $\dim_H \Sigma_{\alpha} = 0$. The results in \cite{LS} were further enhanced by Avila, Last, Shamis, and Zhou \cite{ALSZ}, who established that for any $\alpha \in \mathcal{L}$, $\dim_H \Sigma_{\alpha} = 0$.
However, the box (resp. Hausdorff) dimension may depend sensitively on the arithmetic properties of $\alpha$.
To illustrate this, let us fix any $\delta \in (0,1)$ and define two sets of frequencies as follows:
\begin{equation*}
\mathscr{F}_H(\delta) := \{\alpha \in \mathbb{R} \setminus \mathbb{Q}: \dim_H \Sigma_{\alpha} \leq \delta\}; \quad
\mathscr{F}_B(\delta) := \{\alpha \in \mathbb{R} \setminus \mathbb{Q} : \overline{\dim}_B \Sigma_{\alpha} \leq \delta\}.
\end{equation*}
It is evident that $\mathscr{F}_B(\delta) \subset \mathscr{F}_H(\delta)$, since for any set $A \subset [0,1]$, it holds that $\dim_H A \leq \overline{\dim}_B A$. Moreover, we have $\mathscr{F}_B(\delta) \subsetneq \mathscr{F}_H(\delta)$, as \cite{ALSZ} implies that $\mathcal{L} \subset \mathscr{F}_H(\delta)$, while \cite{JZ} indicates that $\mathscr{F}_B(\delta) \cap \mathcal{L} = \emptyset$.
On the other hand, Helffer-Liu-Qu-Zhou \cite{HLQZ} illustrated the existence of dense frequencies (with positive Hausdorff dimension) in the set $\mathbb{R} \setminus \mathcal{L}$, such that $\dim_H \Sigma_{\alpha} > 0$. Theorem \ref{main-2} is a consequence of the following:
\begin{thm}\label{main-3}
For any $\delta > 0$, the sets $\mathscr{F}_B(\delta)$ and $\mathscr{F}_H(\delta)$ are dense in $\R$ and of positive Hausdorff dimensions.
\end{thm}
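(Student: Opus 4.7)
\textbf{Proof plan for Theorem \ref{main-3}.} Since $\dim_H \leq \overline{\dim}_B$, we have $\mathscr{F}_B(\delta)\subset\mathscr{F}_H(\delta)$, so it suffices to handle $\mathscr{F}_B(\delta)$. The plan is to construct, inside any prescribed open interval $I\subset\R$, a Cantor-type subset $\mathcal{K}\subset I\cap \mathscr{F}_B(\delta)$ of positive Hausdorff dimension by prescribing $\alpha$ through its continued fraction expansion $\alpha=[a_1,a_2,\ldots]$ with convergents $p_n/q_n$. Density of $\mathscr{F}_B(\delta)$ in $\R$ then follows by letting $I$ range over a countable base, and positivity of Hausdorff dimension follows from a single such $\mathcal{K}$.

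First I would assemble the needed analytic input for the critical AMO: Last's total-measure bound $|\Sigma_{p/q}|\leq 8\sqrt e/q$; Hausdorff continuity of the spectrum, giving $d_H(\Sigma_\alpha,\Sigma_{p_n/q_n})\leq C|q_n\alpha-p_n|^{1/2}\leq C(q_nq_{n+1})^{-1/2}$; and, crucially, upper bounds on the lengths of the \emph{individual} bands of $\Sigma_{p_n/q_n}$ coming from the Chambers-type trace-polynomial structure together with the quantitative global theory of the critical operator, in the spirit of \cite{JK,ALSZ}. My target is a cover estimate of the form
\[
N_r(\Sigma_\alpha)\leq q_n^{1+o(1)} \qquad \text{for } r\geq r_n:=C(q_nq_{n+1})^{-1/2}.
\]
Passing to the $(n{+}1)$-st convergent for intermediate scales $r\in[r_{n+1},r_n]$ then yields
\[
\frac{\log N_r(\Sigma_\alpha)}{\log(1/r)}\leq \frac{(1+o(1))\log q_{n+1}}{\tfrac12 \bigl(\log q_{n+1}+\log q_{n+2}\bigr)}.
\]

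Next I would engineer the continued fraction so this ratio is eventually $\leq\delta$. A sufficient condition is $\log q_{n+2}\geq \tfrac{2-\delta}{\delta}\log q_{n+1}$ for all large $n$, which I would enforce by selecting each partial quotient $a_{n+2}$ freely from a dyadic window $[A_{n+1},2A_{n+1}]$ with $A_n\gtrsim q_n^{(2-2\delta)/\delta}$; this forces $\overline{\dim}_B\Sigma_\alpha\leq\delta$. Leaving the first finitely many partial quotients arbitrary places the construction inside any prescribed $I$, giving density. Since at the $n$-th level the construction branches into $\asymp A_n$ cylinders of diameter $\asymp (q_nq_{n+1})^{-1}$, placing the uniform product measure on the branching tree and applying Frostman's lemma yields $\dim_H\mathcal{K}>0$ for every $\delta\in(0,1)$ (analogously to \cite{HLQZ}); the case $\delta\geq 1$ is trivial since $\overline{\dim}_B$ of any subset of $\R$ is at most $1$.

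The hard part will be the cover bound $N_r(\Sigma_\alpha)\leq q_n^{1+o(1)}$. Last's total-measure estimate alone gives only $\overline{\dim}_B\Sigma_\alpha\leq 1$, because a priori a single band of $\Sigma_{p/q}$ could absorb almost all of the $8\sqrt e/q$ measure; ruling this out requires an estimate on the length of each individual band, equivalently a lower bound on the slope of the discriminant polynomial on its preimage of $[-2,2]$. This is the place where the deep arithmetic/dynamical features of the critical AMO enter the argument. Once such individual band-length control is in hand, the continued-fraction engineering and the Frostman-type lower bound on Hausdorff dimension are comparatively routine.
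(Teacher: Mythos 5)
Your high-level skeleton agrees with the paper's: choose $\alpha$ through its continued fraction with large partial quotients to force $\overline{\dim}_B\Sigma_\alpha\le\delta$, run a Frostman/Good argument on the resulting Cantor set of frequencies, and handle density by allowing arbitrary prefixes of partial quotients. The problem is that the core cover bound does not actually follow from the machinery you invoke, and the obstruction is not the one you flag (individual band lengths) but a more structural failure of the rational-approximation scheme.

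The Hausdorff continuity $d_H(\Sigma_\alpha,\Sigma_{p_n/q_n})\lesssim(q_nq_{n+1})^{-1/2}=:r_n$ only gives information at the single scale $r_n$. Consider $r$ slightly below $r_n$. You must then pass to the $(n+1)$-st convergent, giving $N_r(\Sigma_\alpha)\lesssim N_{r/2}(\Sigma_{p_{n+1}/q_{n+1}})$, and the best you can say without further input is $N_{r/2}(\Sigma_{p_{n+1}/q_{n+1}})\lesssim q_{n+1}$. But at such $r$ one still has $\log(1/r)\approx\log(1/r_n)=\tfrac12(\log q_n+\log q_{n+1})$, \emph{not} $\tfrac12(\log q_{n+1}+\log q_{n+2})$ as in your displayed formula (that denominator corresponds to $r\le r_{n+1}$, the opposite end of your window). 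So the ratio is bounded below by roughly
$$\frac{2\log q_{n+1}}{\log q_n+\log q_{n+1}}\,,$$
which tends to $2$, not to $\delta$, as the partial quotients grow. Making $a_n$ large makes this \emph{worse}, not better: the approximation error $r_n$ grows relative to $1/q_{n+1}$, so at scale $r_n$ you only see the crude $q_{n+1}$-band structure of the next rational approximant. Individual band-length bounds on $\Sigma_{p_n/q_n}$ do not repair this, because the missing ingredient is a \emph{nested} description of how the $q_{n+1}$ bands of level $n+1$ distribute inside each band of level $n$, valid at all intermediate scales; a single global Hausdorff-distance bound cannot supply that. The term $|\Sigma_{p_n/q_n}|/r_n\sim\sqrt{q_{n+1}/q_n}$ in the naive cover count is precisely the symptom of this loss.

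This is exactly the gap that the Helffer--Sj\"ostrand renormalization closes, and it is why the paper does not go through rational approximation at all. Instead it builds a hierarchy of coverings $\{\tilde{\mathcal B}_n\}$ of $\Sigma_\alpha$ itself, with explicit control of the number of sub-bands and of all length ratios $|J|/|I|$ at each level, including inside the central ``black box'' (Definition~\ref{config-standard}, Theorems~\ref{theorem3}--\ref{theoreme4}, Theorem~\ref{key-HS-new}). The dimension bound then reduces to a Moran-type computation, $\sum_{J}(|J|/|I|)^\delta\le 1$, carried out by splitting into inner/outer/middle regimes (Lemma~\ref{pre-dim-delta}(3)), and the box-dimension bound additionally requires the two-sided constraint $L\le a_n\le\gamma L$ (Proposition~\ref{basic-H-B}(2)) so that the scale $\mathcal J_{\min}/|I|$ at every node is bounded below uniformly; that two-sided constraint has no analogue in your construction and would need to be added. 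Your Frostman step for $\dim_H$ of the frequency set and the prefix trick for density are fine and match the paper's use of Good's theorem \cite{Go} and Lemma~\ref{odd}.
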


\subsection{Ideas of the proof}
The semiclassical analysis of $M_{\cos,\vec{\alpha}}$
was established in \cite{Ra}; however, the estimates provided there are too crude to yield precise spectral information. For multidimensional Schr\"odinger operators with separable potentials as defined in \eqref{defH1}, their spectra can be expressed as Minkowski sums of one-dimensional spectra. In the limit-periodic case, Damanik, Fillman, and Gorodetski \cite{DFG} constructed a Cantor spectrum by analyzing a one-dimensional limit-periodic Schr\"odinger operator with a spectrum of lower box dimension zero, then the  result follows from standard arguments about Minkowski sums of fractal sets. Their methodology, however, fails in our context due to the positivity of the Hausdorff dimension.
To address this, we explicitly demonstrate that the upper box dimension can be made arbitrarily small (Theorem \ref{main-3}), which forces the Minkowski sum to have Lebesgue measure zero (Theorem \ref{main}). The crux of the proof lies in Theorem~\ref{main-3}, whose analysis relies on the semiclassical analysis of the almost Mathieu operator (AMO) developed by Helffer and Sj\"ostrand  \cite{HS1,HS2,HS3}, which will be shortly recalled in the appendices.

\subsubsection{Structure of the spectrum} \label{sec-struc-spec}In our previous work \cite{HLQZ}, we established a lower bound for the Hausdorff dimension of the spectrum for frequencies $\alpha = [a_1, a_2, \ldots]$ where $a_n$ is eventually sufficiently large. Our work is based on  fine covering structure of the spectrum established in \cite{HS1,HS2}.  In \cite{HS1}, Helffer and Sj\"ostrand inductively constructed a family of nested coverings $\{\mathcal{B}_n: n \in \mathbb{N}\}$ of the spectrum. Let us fix a small $\epsilon > 0$. The covering $\mathcal{B}_0$ consists of a single interval (or band) $B_\emptyset$, which is approximately $[-4, 4]$. The covering $\mathcal{B}_1$ is a disjoint family of subintervals of $B_\emptyset$ defined as follows: $$\mathcal{B}_1 = \{B_{-p}, \ldots, B_{-1}, B_0, B_1, \ldots, B_q\},$$ where $B_0 = [-\epsilon, \epsilon]$ and \begin{equation}\label{num-ratio}
p, q \sim a_1; \quad \frac{|B_i|}{|B_\emptyset|} \sim e^{-a_1}, \quad i \neq 0.
\end{equation}
See Figure \ref{fig-black-box} for an illustration. Here, $B_0$ is considered a ``black box" in the sense that complete information about its internal structure is unavailable. The construction of $\mathcal{B}_2$ proceeds as follows: for each $i \neq 0$, there exists a disjoint family of subintervals of $B_i$ given by $$\mathcal{J}_i = \{B_{i(-p_i)}, \ldots, B_{i(-1)}, B_{i0}, B_{i1}, \ldots, B_{iq_i}\},$$ with $B_{i0}$ positioned in the ``middle" of $B_i$ and satisfying $|B_{i0}|/|B_i| \sim \epsilon$, such that \begin{equation}\label{num-ratio-1}
p_i, q_i \sim a_2; \quad \frac{|B_{i\ell}|}{|B_i|} \sim e^{-a_2}, \quad \ell \neq 0.
\end{equation} Again, $B_{i0}$ is treated as a black box. The second-order covering is then defined as $$\mathcal{B}_2 = \{B_0\} \cup \bigcup_{i \neq 0} \mathcal{J}_i.$$ For each interval in $\mathcal{B}_2$ that is not a black box, the above construction can be repeated. By aggregating all resulting intervals, we obtain $\mathcal{B}_3$, and so forth.

\begin{figure}
		\includegraphics[scale=0.7]{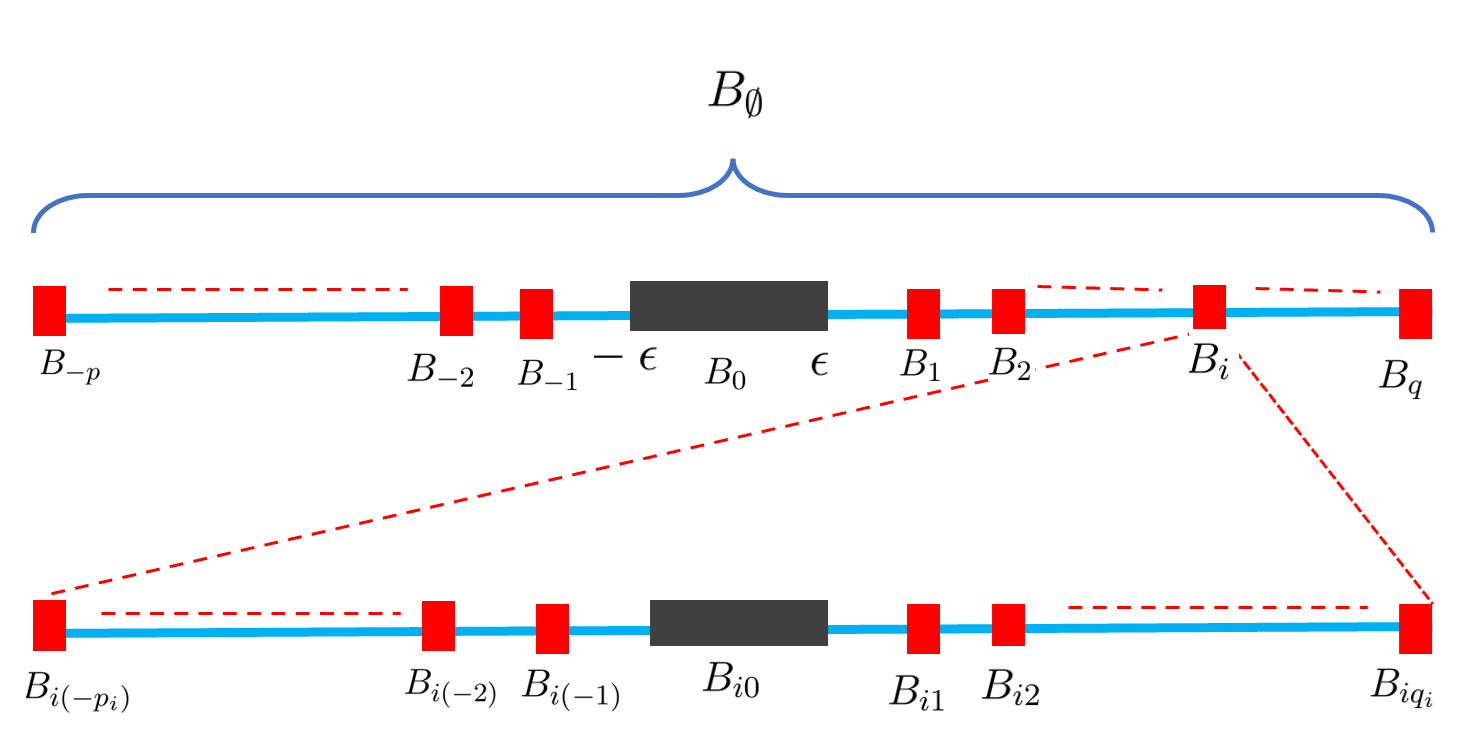}
		
		\caption{The covering $\B_1$ and zoom in of $B_i$}\label{fig-black-box}
 \end{figure}

For each $n$, if we disregard all black boxes in $\mathcal{B}_n$ and denote the new family as $\mathcal{B}_n^*$, we establish a nested covering structure $\{\mathcal{B}_n^*: n \geq 0\}$.
The limit set $X$ of this covering structure satisfies $$X := \bigcap_n \bigcup_{B \in \mathcal{B}_n^*} B \subset \Sigma_\alpha.$$ Using the precise information given by  \eqref{num-ratio} and \eqref{num-ratio-1},
we can estimate the lower bound of the Hausdorff dimension of the spectrum.

However, to derive an upper bound for the upper box dimension of the spectrum, partial information about the structure of the spectrum $\Sigma_\alpha$ is not sufficient; one must understand how the black boxes evolve
during each semiclassical approximation procedure. This is precisely what Helffer and Sj\"ostrand  accomplished in \cite{HS3}.
We will elucidate the situation concerning the first black box $B_0 = [-\epsilon, \epsilon]$, see Figure \ref{fig-B_0} for an illustration. In \cite{HS3}, Helffer and Sj\"ostrand  replaced the black box $B_0$
with a family of subintervals $\{B_0^{(i)}: i \in \mathscr{I}\}$, for which they maintained exact control over $\# \mathscr{I}$ and the ratios $|B_0^{(i)}|/|B_\emptyset|$, similar to \eqref{num-ratio} and \eqref{num-ratio-1}.
More specifically, we introduce  $ h \approx 1/a_1$  and  fix a  constant $M > 1$.   Define
$$\mathscr I_{in}:=\{i\in \mathscr I: B_0^{(i)} \cap [-Mh, Mh]\ne \emptyset\} \ \text{ and }\ \  \mathscr I_{mid}:=\mathscr I\setminus \mathscr I_{in}.$$
Then $0\in \mathscr I_{in}$ and  $B_0^{(0)}$ is  at the center of $B_0$,  of size $h$,  and is  the  largest subinterval. For $i\in \mathscr I_{in}\setminus \{0\}$,
\begin{equation}\label{estimate-in}
    |B_0^{(i)}|\sim -h/\log h \ \ \text{ and }\ \ \#\mathscr I_{in}\sim -\log h.
\end{equation}
  For the bands in $B_0 \setminus [-Mh, Mh]$, the lengths decrease progressively, ranging roughly from $-h/\log h$ to $e^{-1/h}$. The exact description is a bit complex,
  we refer the reader to Definition \ref{config-standard} (vi).  It is in this part, that the bands exhibit typical multi-scale nature.
Consequently, we obtain an upgraded covering
$$\tilde{\mathcal{B}}_1 := \{B_i: i \neq 0\} \cup \{B_0^{(i)}: i\in \mathscr I_{in}\} \cup \{B_0^{(i)}:  i\in \mathscr I_{mid}\}$$
such that for the pair $(B_\emptyset, \tilde{\mathcal{B}}_1)$, we possess complete information regarding the cardinality of $\tilde{\mathcal{B}}_1$ and the ratios $|B|/|B_\emptyset|$ for all $B \in \tilde{\mathcal{B}}_1$. This motivates our definition of a special type of configuration, which will be explicitly detailed in Section \ref{sec-config}.

\begin{figure}
		\includegraphics[scale=0.7]{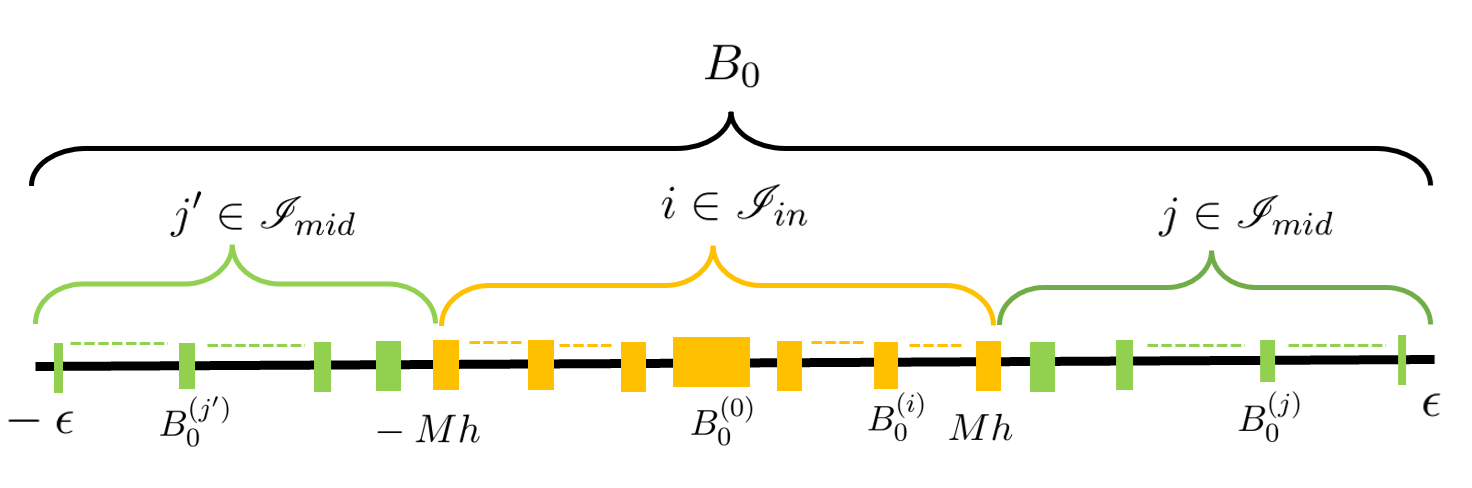}
		
		\caption{The black box $B_0$}\label{fig-B_0}
 \end{figure}

The strength of \cite{HS3} lies in the ability to iterate this process. The authors introduce two types. Specifically, starting from any band $B$ in $\tilde{\mathcal{B}}_1$ with a certain type, one can repeat the aforementioned construction to obtain another pair $(B, \tilde{\mathcal{B}}_B)$, which either behaves  essentially the same as $(B_\emptyset, \tilde{\mathcal{B}}_1)$ if $B$ has type $\1$,  or  can be decomposed into finitely many  (with uniform upper bound) pairs $\{(\hat{B}_j, \tilde{\mathcal{B}}_B^{(j)})\}$, each of which behaves similarly to $(B_\emptyset, \tilde{\mathcal{B}}_1)$ if $B$ has type $\2$ (see also Definitions \ref{defB2} and \ref{defB4}). In this manner, we can construct $\tilde{\B}_2$, ensuring a precise control over the cardinality of $\tilde{\B}_2$ and the ratios of lengths between any band and its sub-bands. This process can be continued to construct the entire family $\{\tilde{\mathcal{B}}_n: n \geq 1\}$. With this family established, we can ultimately estimate the upper bound of the upper box dimension of the spectrum.

\subsubsection{Estimate the fractal dimensions}
In \cite{HS2,HS3}, it was established that the spectrum $\Sigma_\alpha$ is a Cantor set. Furthermore, \cite{HS3} suggested that this result might enable a detailed analysis of the Hausdorff dimension of $\Sigma_\alpha$. The present work realizes this suggestion by providing a concrete study of the Hausdorff dimension, even its box dimension.

To estimate the fractal dimensions of $\Sigma_\alpha$, we analyze the nested covering structure $\{\tilde{\mathcal{B}}_n : n \geq 1\}$. While this structure shares similarities with the Moran sets studied in \cite{FWW}, it exhibits greater complexity. Specifically, the spectrum can be interpreted as a nonlinear, graph-directed, and highly non-homogeneous variant of a Moran set.

Estimating the upper bound for the Hausdorff dimension is relatively straightforward due to the natural covering of the spectrum provided by each $\tilde{\mathcal{B}}_n$. Given the precise metrical information encoded in these coverings, deriving this bound requires only standard arguments. In contrast, the upper box dimension poses significant challenges. For fixed $r > 0$, one has to estimate $N_r(\Sigma_\alpha)$, the minimal number of intervals of length $r$ needed to cover $\Sigma_\alpha$. The difficulty arises from the multi-scale structure of the bands in $\tilde{\mathcal{B}}_B$: the configuration $(B, \tilde{\mathcal{B}}_B)$ exhibits pronounced non-homogeneity in band lengths.

A critical insight resolves this issue: by imposing an upper bound on the coefficients $a_n$, we ensure a lower bound on the length ratio between the shortest  band in $\tilde \B_B$ and $B.$
 This constraint allows us to effectively control $N_r(\Sigma_\alpha)$, thereby bounding the upper box dimension.

Finally, let us explain why the upper box dimension of the spectrum can be made arbitrarily small.  Recall that for a self-similar set $S  $ with open set condition, the Hausdorff and box dimensions are equal and  the common value $s$ is determined through
$\sum_{i=1}^k c_i^s=1$, where $c_i$  is the $i$-th   contraction ratio.  By an analog with this, if for any small $\delta\in (0,1)$ and a typical  configuration like $(B_\emptyset, \tilde \B_1)$, we have
$$\sum_{B\in \tilde \B_1} r_B^\delta\le 1, \ \ \text{ where }\ \ r_B=\frac{|B|}{|B_\emptyset|},$$
then we can use similar argument to show that  the Hausdorff dimension of the spectrum is less than $\delta$. If in addition, $\{a_n:n\in \N\}$ is bounded from above, then the upper box dimension of the spectrum is also less than $\delta$. By the definition of $\tilde \B_1$, we can write
$$\sum_{B\in \tilde \B_1} r_B^\delta=\sum_{i\ne 0}\left(\frac{|B_i|}{|B_\emptyset|}\right)^\delta+\sum_{i\in \mathscr I_{in}}\left(\frac{|B_0^{(i)}|}{|B_\emptyset|}\right)^\delta+\sum_{i\in \mathscr I_{mid}}\left(\frac{|B_0^{(i)}|}{|B_\emptyset|}\right)^\delta=: S_{out}(\delta)+S_{in}(\delta)+S_{mid}(\delta).$$
By \eqref{num-ratio} and \eqref{estimate-in}, it is seen that for $a_1$ large enough (equivalent, for $h$ small enough), we have  $S_{out}(\delta)<1/3$ and $S_{in }(\delta)<1/3$.  But for $S_{mid}(\delta)$, there is no easy path to estimate. We need to decompose the sum according to the different scales and estimate them separately.  Although quite tricky, it turns out one can choose suitable $h$ such that $S_{mid}(\delta)<1/3.$ Consequently we get the desirable estimates.

The above argument assumes $a_n$ is large for all $n \geq 1$. To extend these fractal dimension estimates to a dense set of frequencies $\alpha$, we relax this condition: it suffices for them that there exists   $m \in \mathbb{N}$ such that $a_n$ is sufficiently large for all $n \geq m$. To achieve this, we need the full strength of  \cite{HS2} and to extend slightly \cite{HS3}.  For details, we refer the reader to   Appendix~\ref{appc}.


\subsection{Outline of the paper}

The rest of the paper is organized as follows. In Section \ref{sec-proof-main},  we  prove Theorem \ref{main}   by assuming Theorem \ref{main-3}. In Section \ref{sec-config}, we define certain  abstract configurations and study their basic properties.  In Section \ref{sec-nested-covering}, we define an abstract  nested covering structure  and obtain the upper bound estimates of the Hausdorff and upper box dimensions  of the limit set.  In Section \ref{sec-dim}, we apply the result established in Section \ref{sec-nested-covering} to critical AMO and prove  Theorem \ref{main-3} and Theorem \ref{main-2}.   In Appendices  \ref{sec-ap-a},  \ref{sec-ap-b} and \ref{appc}
 we extract from \cite{HS1,HS2,HS3} what we need  for the description of the nested covering structure.

\section{Proof of Theorem \ref{main}  }\label{sec-proof-main}

  In this section, we prove Theorem \ref{main}, which is a  direct consequence of  Theorem \ref{main-3}. The proof of Theorem \ref{main-3} is considerably more involved and will be given in Section \ref{sec-dim}.

Let us recall some known facts from fractal geometry.

\begin{lem}[\cite{Fal}]\label{fractal-basic}
  Assume $A_1,\cdots, A_d\subset \R$ are  nonempty. Then

  (1)  $\sum_{i=1}^{d}\dim_H A_i\le \dim_H \Big(\prod_{i=1}^{d}A_i\Big).$

  (2) If $A_i, 1\le i\le d$  are bounded, then
  $$\overline\dim_B (A_1+\cdots+A_d)\le \overline\dim_B \Big( \prod_{i=1}^{d}A_i\Big)\le \sum_{i=1}^{d}\overline\dim_B A_i.$$
\end{lem}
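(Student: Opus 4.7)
The plan is to treat parts (1) and (2) by standard tools from fractal geometry, which we only sketch since both are classical (and attributed here to Falconer). The main ingredients are Frostman's lemma (the mass distribution principle) for the lower bound on the Hausdorff dimension of a product, and the Lipschitz invariance plus subadditivity of box dimension under products for the two inequalities in (2).

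For part (1), I would first reduce to the case that each $A_i$ is compact by replacing $A_i$ with a compact subset of Hausdorff dimension arbitrarily close to $\dim_H A_i$ (such a subset exists by the definition of $\dim_H$ and inner regularity arguments). Then, for any $s_i<\dim_H A_i$, Frostman's lemma produces a Borel probability measure $\mu_i$ supported on $A_i$ with
\[
\mu_i(I)\le C_i\, |I|^{s_i}
\]
for every interval $I\subset\R$. The product measure $\mu:=\mu_1\otimes\cdots\otimes\mu_d$ is supported on $\prod_{i=1}^d A_i\subset \R^d$, and for any Euclidean ball $B(x,r)\subset\R^d$ we can cover $B(x,r)$ by $O(1)$ boxes of the form $I_1\times\cdots\times I_d$ with $|I_i|\le 2r$, giving
\[
\mu\bigl(B(x,r)\bigr)\le C\, r^{s_1+\cdots+s_d}.
\]
The mass distribution principle then yields $\dim_H\bigl(\prod_{i=1}^d A_i\bigr)\ge s_1+\cdots+s_d$, and letting each $s_i\uparrow \dim_H A_i$ gives (1).

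For part (2), the first inequality follows from the observation that the addition map
\[
F:\R^d\to\R,\qquad F(x_1,\ldots,x_d)=x_1+\cdots+x_d,
\]
is Lipschitz, and satisfies $F\bigl(\prod_{i=1}^d A_i\bigr)=A_1+\cdots+A_d$. Since Lipschitz images do not increase upper box dimension, we get $\overline\dim_B(A_1+\cdots+A_d)\le \overline\dim_B\bigl(\prod_{i=1}^d A_i\bigr)$. For the second inequality, I would use the covering characterization: if $\mathcal U_i$ is a cover of $A_i$ by intervals of length $r$ with $\#\mathcal U_i=N_r(A_i)$, then the family $\{U_1\times\cdots\times U_d:U_i\in\mathcal U_i\}$ covers $\prod A_i$ by $d$-dimensional cubes of side $r$, each of which is contained in $O(1)$ Euclidean balls of radius $r$. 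Hence
\[
N_r\Bigl(\prod_{i=1}^d A_i\Bigr)\le C\prod_{i=1}^d N_r(A_i),
\]
and taking $\log/\log(1/r)$ and $\limsup$ as $r\downarrow 0$ yields $\overline\dim_B\bigl(\prod_{i=1}^d A_i\bigr)\le \sum_{i=1}^d \overline\dim_B A_i$, using only that $\limsup$ is subadditive.

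There is no genuine obstacle here: the potential-theoretic step in (1) is routine once Frostman's lemma is available, and the two inequalities in (2) are essentially bookkeeping about covers. The only minor subtlety is the reduction to compact sets in (1), which is handled by standard inner approximation. Since Falconer's textbook already records both statements, I would simply refer to it rather than reproduce the full details.
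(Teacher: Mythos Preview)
Your proof is correct and follows the same route as the paper, which simply cites Falconer's Product formulas 7.2 and 7.5 together with the Lipschitz invariance of upper box dimension (Falconer, Sec.~3.2) applied to the addition map $P(x_1,\ldots,x_d)=\sum_i x_i$. You supply more detail than the paper does---the Frostman/mass-distribution argument for (1) and the explicit covering count for the second inequality in (2)---but the underlying approach is identical.
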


\begin{proof}
  (1) See \cite{Fal} Product formula 7.2 .

  (2)
  See \cite{Fal} Product formula 7.5 for the second inequality.  Define the map $P:\R^d\to \R$  as $P(x_1,\cdots,x_d):=\sum_{i=1}^{d}x_i$. Then $P$ is Lipschitz.  By  \cite{Fal} Sec. 3.2 iv), we have
  $$
  \overline\dim_B (A_1+\cdots+A_d)=\overline\dim_B \Big(P(\prod_{i=1}^{d}A_i)\Big)\le \overline\dim_B \Big( \prod_{i=1}^{d}A_i\Big).$$
  So (2) holds.
\end{proof}

 In the following discussion, when considering any self-adjoint operator $ H $ defined on a Hilbert space $ \mathcal{H} $ ($ \mathcal{H}$ usually denotes  $\ell^2(\mathbb{Z}^d) $ or $ L^2(\mathbb{R}) $), we denote the spectrum of $ H $ by $ \mathrm{Sp}(H) $ and continue to use $ \Sigma_{\alpha} $ to represent the spectrum of the critical  AMO.

\noindent {\bf Proof of Theorem \ref{main}.}
Fix $\delta=1/(d+1)$. Define
$$
\mathcal F:=\{\vec \alpha\in \R^d: |{\rm Sp}(M_{\cos,\vec \alpha})|=0\}.
$$

We claim that: $\mathscr F_B(\delta)^d\subset \mathcal F.$
Indeed, for any $\vec \alpha=(\alpha_1,\cdots,\alpha_d)\in \mathscr F_B(\delta)^d,$ by \cite{DG} Proposition 6.1 (a), we have
$$
{\rm Sp}(M_{\cos,\vec \alpha})=\Sigma_{\alpha_1}+\cdots+\Sigma_{\alpha_d}.
$$
Since $\alpha_i\in \mathscr F_B(\delta)$, by Lemma \ref{fractal-basic}(2),
$$
\overline\dim_B \Big({\rm Sp}(M_{\cos,\vec {\alpha}})\Big) =\overline\dim_B\big(\Sigma_{\alpha_1}+\cdots+\Sigma_{\alpha_d}\big)\le d\delta<1.
$$
Consequently $|{\rm Sp}(M_{\cos,\vec \alpha})|=0.$
So the claim holds.

By Theorem \ref{main-3},  $\mathscr F_B(\delta)$ is dense in $\R$ and $\dim_H \mathscr F_B(\delta)>0$.  So we conclude that $\mathscr F_B(\delta)^d$
is dense in $\R^d$.  By Lemma \ref{fractal-basic}(1), we have
$\dim_H \Big( \mathscr F_B(\delta)^d\Big) \ge d \dim_H\mathscr F_B(\delta)>0.$
Then by the claim, $\mathcal F$ is dense in $\R^d$ and has positive Hausdorff dimension. So $\mathcal F+\Z^d\subset \T^d$ is dense in $\T^d$ and has positive Hausdorff dimension.
\hfill $\Box$

\section{ Configurations and their  basic properties}\label{sec-config}

In this section, we define several configurations related to  an interval and a family of its subintervals.    They are  abstractions  from the covering structure of the spectrum via one step of semiclassical approximation, as we explained in Paragraph \ref{sec-struc-spec}. We also study its basic properties, which will be useful later for estimating the dimensions of the spectrum.

\subsection{Primitive  configuration}\

At first, we introduce some notation.

Given two intervals  $I=[a,b]$ and $J=[c,d]$, we write $I<J$ if $b<c.$
Assuming that  $\mathcal J$ is a finite family of compact intervals, we write:
  \begin{equation*}
    \mathcal J_{\min}:=\min\{|J|: J\in \mathcal J\}\ \ \text{ and }\ \ \mathcal J_{\max}:=\max\{|J|: J\in \mathcal J\}.
  \end{equation*}

\begin{defi}
  If  $I$ is a compact interval  and    $\mathcal J$  is   a  finite  class  of  disjoint  subintervals of $I$,  we call $(I,\mathcal J)$ a configuration.   If moreover,  $\#\mathcal J\ge 3$ and we index $\mathcal J$ as
$$ \mathcal J:=\{J_i: -r\le i\le s\}$$
  such that
$J_i< J_{i+1}$ for any $-r\le i<s$, then we call $(I,\mathcal J)$ a $[r,s]$-configuration.
\end{defi}

  Assume $(I,\mathcal J)$ is a $[r,s]$-configuration.  For any $1\le i\le s$, we denote the gap between $J_{i-1}$ and $J_i$ by $G_i$. For any $1\le i\le r$, we denote the gap between $J_{-i}$ and $J_{-i+1}$ by $G_{-i}$. See Figure \ref{fig-r-s-config}  for an illustration.

\begin{figure}
		\includegraphics[scale=0.7]{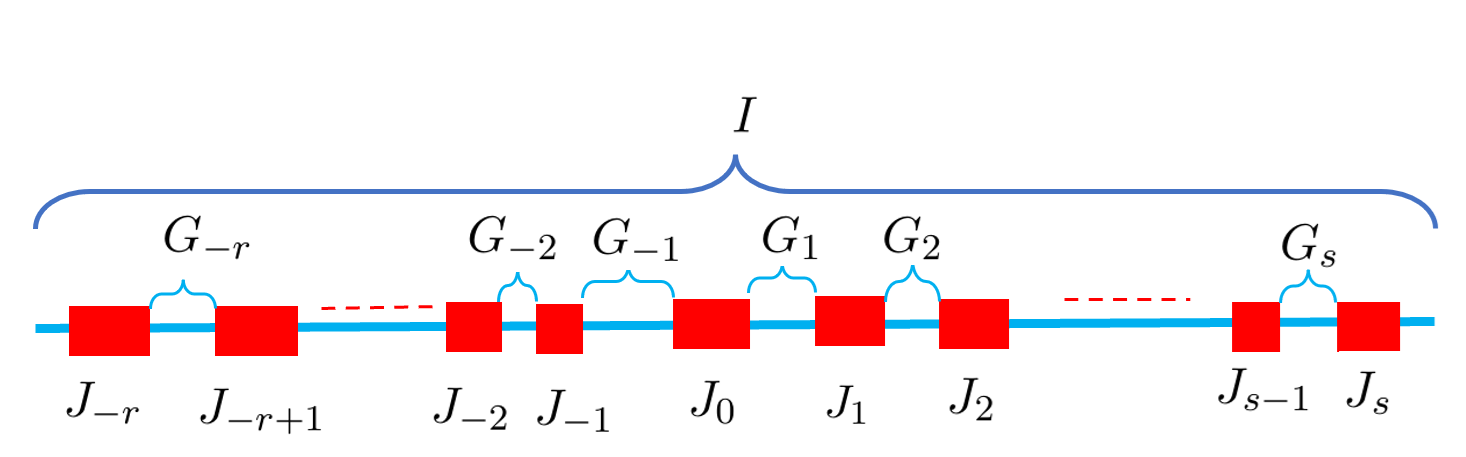}
		
		\caption{A $[r,s]$-configuration}\label{fig-r-s-config}
 \end{figure}

  \subsection{Configurations with metrical control} \

  The following definition is strongly motivated by the pair $(B_\emptyset, \tilde \B_1)$  mentioned  in  Paragraph \ref{sec-struc-spec}. See Figure \ref{fig-standard-config} for an illustration of the definition.

\begin{defi}\label{config-standard}
Given a $[r,s]$-configuration $(I,\mathcal J)$.  Write
 $\mathscr I:=\{i\in\Z:-r\le i\le s\}$ and
$$
I:=[\eta,\xi];\ \  J_i:=[\eta_i,\xi_i],\  (i\in \mathscr I).
$$
  Given the following parameters:
\begin{equation}\label{parameter}
 \varsigma\in (0,4), \ \epsilon\in (0,\varsigma/100),\  M> C>1,\ 0<h<(1/C)\wedge(\epsilon/ M)\wedge e^{-1/C}.
\end{equation}
$(I,\mathcal J)$ is called a standard $(\varsigma,\epsilon, M, C,h)$-configuration if the following hold:

(i)   $I$ satisfies
$$[-\varsigma,\varsigma]\subset I=[\eta,\xi]\subset [-4,4].$$

(ii)  The numbers $r,s$ satisfy
 $$C^{-1}h^{-1}\le r,s\le Ch^{-1}.$$

 (iii) The ``central", the most left and the most right subintervals satisfy
$$0\in J_0,\ \  C^{-1}h\le |J_0|\le Ch;\ \ \eta_{-r}=\eta;\ \ \xi_s=\xi.$$

(iv) Write $r_1:=-\min \mathscr I_{in}, s_1=\max \mathscr I_{in}$, where
$$
\mathscr I_{in}:=\{i: J_i\cap [-Mh,Mh]\ne\emptyset\}.
$$
we have
$$-C^{-1}\log h\le r_1,s_1\le -C\log h.$$
Moreover, for any $i\in \mathscr I_{in}\setminus \{0\}$, we have
$$
\frac{h}{-C\log h}\le|J_i|  \le \frac{Ch}{-\log h}\ \ \text{ and }\ \
\frac{h}{-C\log h}\le  |G_i| \le C h.
$$

(v)  Write  $\mathscr I_{out}:=\mathscr I_{out}^-\bigsqcup \mathscr I_{out}^+$, where
$$
\mathscr I_{out}^-:=\{i: J_i\cap [\eta,-\epsilon]\ne\emptyset\};\ \ \ \mathscr I_{out}^+:=\{i: J_i\cap [\epsilon,\xi]\ne\emptyset\}.
$$
Then for any $i\in \mathscr I_{out}$, we have
$$
e^{-\frac{C}{h}}\le |J_i|\le e^{-\frac{1}{Ch}};\ \ C^{-1}h\le |G_i|\le Ch.
$$

(vi) For any $i\in \mathscr I_{mid}:=\mathscr I\setminus(\mathscr I_{in}\cup
 \mathscr I_{out})$, we have
$$
 e^{-\frac{|c_i|C}{h}}\frac{h}{-C\log|c_i|}\le |J_i|\le  e^{-\frac{|c_i|}{Ch}}\frac{Ch}{-\log|c_i|};\ \  \frac{h}{-C\log |g_i|}\le |G_i|\le \frac{Ch}{-\log |g_i|},
$$
where $c_i$ and $g_i$ are the centers of $J_i$ and $G_i$, respectively.

\end{defi}
See Figures \ref{fig-inside}, \ref{fig-outside}, \ref{fig-middle} for geometric illustrations of  $(iv), (v), (vi),$ respectively.

\begin{figure}
		\includegraphics[scale=0.7]{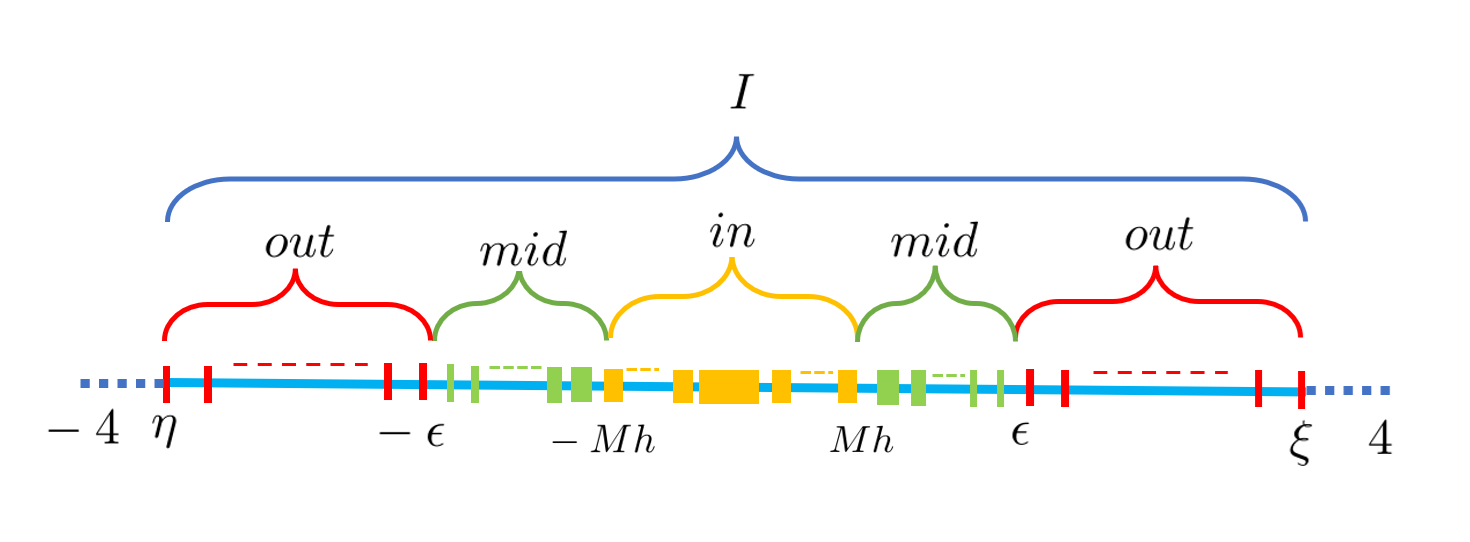}
		
		\caption{A  standard configuration from far away}\label{fig-standard-config}
 \end{figure}

\begin{figure}
		\includegraphics[scale=0.7]{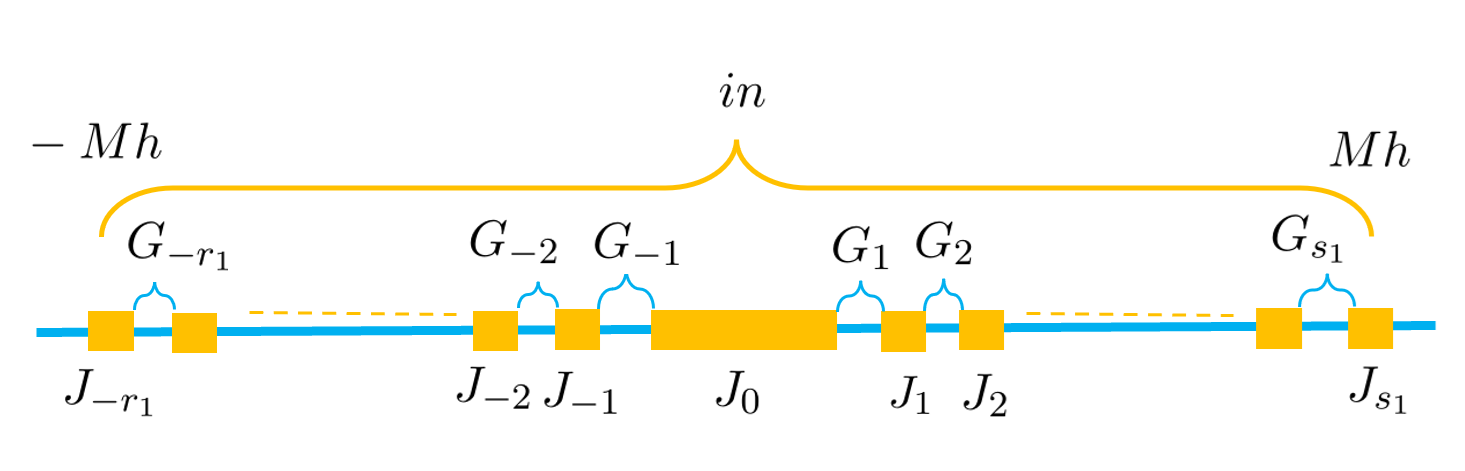}
		
		\caption{zoom in of the inside part}\label{fig-inside}
 \end{figure}

\begin{figure}
		\includegraphics[scale=0.8]{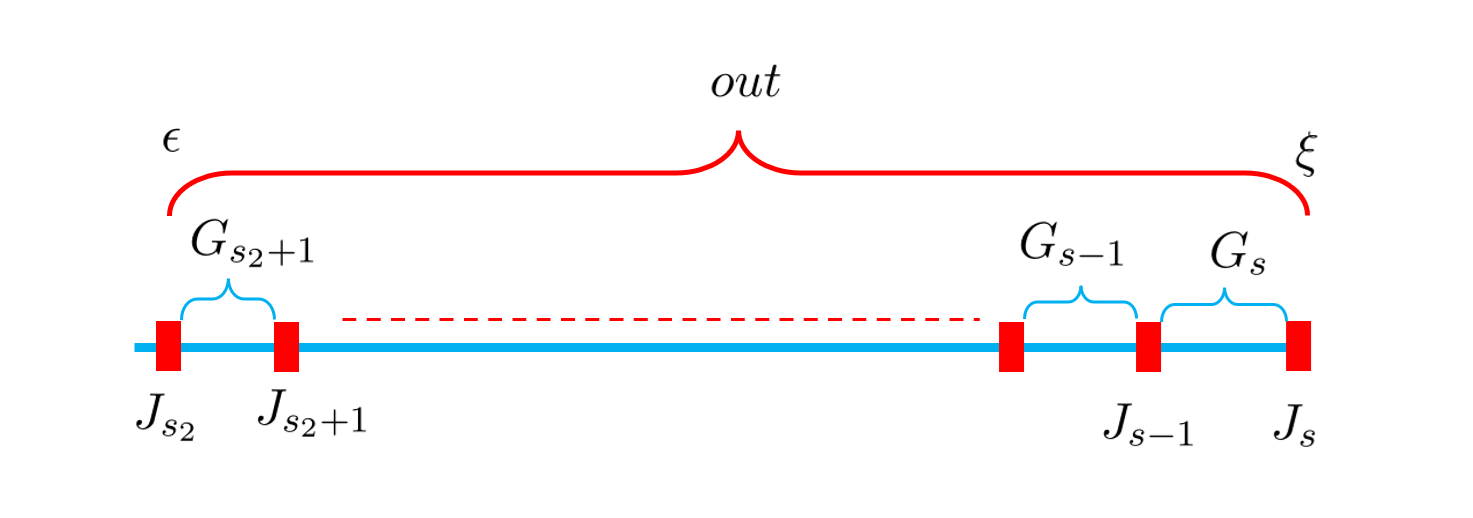}
		
		\caption{zoom in of the right  outside part, where $s_2=\min \mathscr I_{out}^+$}\label{fig-outside}
 \end{figure}

\begin{figure}
		\includegraphics[scale=0.8]{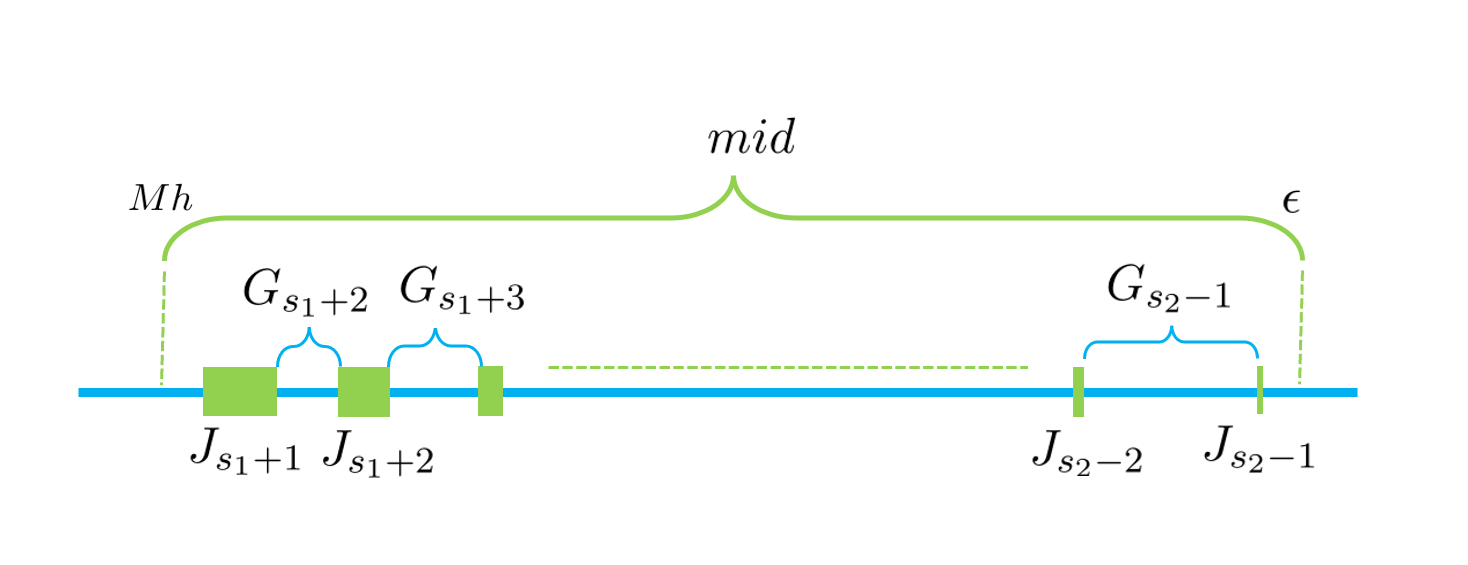}
		
		\caption{zoom in of the right middle part}\label{fig-middle}
 \end{figure}
Let us first make some simple observations that will be useful later.

\begin{lem}\label{lenth-of-band}
 Assume $\varsigma,\epsilon, M,C$ satisfy \eqref{parameter}. Then there exists $\hat h=\hat h(C)>0$ such that if $h\in (0,\hat h]$ and  $(I,\mathcal J) $ is a standard $(\varsigma,\epsilon, M,C,h)$-configuration, then
   \begin{equation}\label{upper-lower-length}
    e^{-C/h}\le \mathcal J_{\min}\le \mathcal J_{\max}\le Ch.
   \end{equation}

\end{lem}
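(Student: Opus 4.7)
The strategy is to verify the two-sided estimate $e^{-C/h} \le |J_i| \le Ch$ for every $i \in \mathscr I$ and then take the minimum over $i$. I would split according to the partition $\mathscr I = \mathscr I_{in} \sqcup \mathscr I_{out} \sqcup \mathscr I_{mid}$ and apply the quantitative controls (iii)--(vi) of Definition \ref{config-standard} in each subcase. The threshold $\hat h(C)$ will then be the minimum of finitely many thresholds produced by the individual subcases, each depending only on $C$ (the parameters $\varsigma,\epsilon,M$ will enter only through inequalities that are automatic under \eqref{parameter}).

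For the upper bound, the case $i = 0$ is immediate from (iii); for $i \in \mathscr I_{in}\setminus\{0\}$, (iv) gives $|J_i| \le Ch/(-\log h)$, which is dominated by $Ch$ since $h \le e^{-1/C} \le e^{-1}$ forces $-\log h \ge 1$; for $i \in \mathscr I_{out}$, (v) gives $|J_i| \le e^{-1/(Ch)}$, which is $\le Ch$ once $h$ is sufficiently small; for $i \in \mathscr I_{mid}$, the defining conditions force $Mh < |c_i| < \epsilon < 1/e$, so both $e^{-|c_i|/(Ch)} \le 1$ and $-\log|c_i| > 1$, and the right half of (vi) gives $|J_i| \le Ch$.

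For the lower bound, the cases $i = 0$, $i \in \mathscr I_{in}\setminus\{0\}$, and $i \in \mathscr I_{out}$ all reduce to elementary polynomial-versus-$e^{-C/h}$ comparisons applied to (iii)--(v), each valid below a threshold depending only on $C$. The delicate case is $i \in \mathscr I_{mid}$, where the left half of (vi) reads
$$|J_i| \ge e^{-|c_i|C/h}\,\frac{h}{-C\log|c_i|}.$$
Here I would use $|c_i| < \epsilon$ to get $e^{-|c_i|C/h} \ge e^{-\epsilon C/h}$, and $|c_i| \ge Mh$ with $M > 1$ to get $-\log|c_i| \le -\log h$, yielding $|J_i| \ge e^{-\epsilon C/h}\cdot h/(-C\log h)$. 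Since $\epsilon < \varsigma/100 < 1/25$, the residual exponent $(1-\epsilon)C/h$ dominates the logarithmic correction $\log(-C\log h/h)$ for all $h$ below a threshold depending only on $C$, giving $|J_i| \ge e^{-C/h}$.

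The main obstacle will be exactly this last case: it requires both pieces of geometric information on $c_i$ --- the upper bound from $J_i \subset (-\epsilon,\epsilon)$ and the lower bound from $J_i \cap [-Mh,Mh] = \emptyset$ --- to control the exponential and logarithmic factors in (vi) simultaneously. The residual exponent gap of $(1-\epsilon)C/h$, guaranteed by $\epsilon < 1$ under \eqref{parameter}, is what absorbs the polynomial prefactor and allows the threshold to depend only on $C$.
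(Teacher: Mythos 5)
Your proposal is correct and follows essentially the same route as the paper: split $\mathscr I$ into $\{0\}\cup(\mathscr I_{in}\setminus\{0\})\cup\mathscr I_{out}\cup\mathscr I_{mid}$, apply (iii)--(vi) case by case, and observe that the key constraint $h\le Mh\le |c_i|\le\epsilon\le\varsigma/100$ gives both $-\log|c_i|\le -\log h$ and a uniform upper bound on $|c_i|$ that leaves an exponential margin $e^{-C/h}$ versus $e^{-(\text{const})\cdot C/h}\cdot h/(-C\log h)$. The only cosmetic difference is that the paper uses the looser universal bound $|c_i|<1/10$ (so the margin is $(9/10)C/h$) while you use $|c_i|<\epsilon<1/25$; both work, though your closing sentence attributes the uniformity to ``$\epsilon<1$'' when what is actually needed, and what your earlier line correctly records, is $\epsilon<\varsigma/100<1/25$, i.e.\ $\epsilon$ bounded away from $1$ independently of the parameters.
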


\begin{proof}
  Choose $\hat h=\hat h(C)>0$ such that for any $h\in (0,\hat h]$, we have
\begin{equation}\label{h_0}
 e^{-\frac{1}{Ch}}\le Ch \ \ \text{ and }\ \  e^{-\frac{C}{h}}\le e^{-\frac{C}{10h}}\frac{h}{-C\log h} .
\end{equation}
Notice that if $i\in \mathscr I_{mid}$, then
$$h\le Mh\le|c_i|\le \epsilon\le \varsigma/100<1/10.$$
So if $h\in (0,\hat h]$, by Definition \ref{config-standard} and \eqref{h_0},
\begin{eqnarray*}
\mathcal J_{\max}&\le& \max\left\{Ch, \frac{Ch}{-\log h}, e^{-\frac{1}{Ch}}, \frac{Ch}{\log 10}\right\}\le Ch,\\
\mathcal J_{\min}&\ge& \min\left\{\frac{h}{C}, \frac{h}{-C\log h}, e^{-\frac{C}{h}}, e^{-\frac{C}{10h}}\frac{h}{-C\log h}\right\}\ge e^{-\frac{C}{h}}.
\end{eqnarray*}
So \eqref{upper-lower-length} holds.
\end{proof}

To model the more general pair $(B,\tilde \B_B)$  which  is mentioned  in Paragraph \ref{sec-struc-spec},  we introduce two more configurations as follows:

\begin{defi}
  Assume $(I,\mathcal J)$ is a $[r,s]$-configuration and  $\varsigma,\epsilon, M,C, h$ satisfy \eqref{parameter}. We call $(I,\mathcal J)$ an $(\varsigma,\epsilon, M,C,h)$-configuration if there exists an affine map $T$ such that $(T(I),T(\mathcal J))$ is a standard $(\varsigma,\epsilon, M, C,h)$-configuration, where
  $
  T(\mathcal J):=\{T(J): J\in \mathcal J\}.
  $
\end{defi}


\begin{defi}\label{config-kappa-rho}
    Assume $(I,\mathcal J)$ is a  configuration and  $\varsigma,\epsilon, M,C, h$ satisfy \eqref{parameter}.  Assume $k\in \N$ and $\rho\in (0,1)$. We call $(I,\mathcal J)$ a  $(k,\rho;\varsigma,\epsilon, M, C,h)$-configuration if the following hold:

(i)
 There exists a  disjoint family  $\{I_1,\cdots, I_k\}$ of subintervals of $I$   such that
     $$
    \frac{\rho}{k}\le  \frac{|I_i|}{|I|}\le \frac{1}{\rho k}, \  \forall i=1,\cdots,k
     $$
     and for any $J\in \mathcal J$, there exists some $i$ (hence unique) such that $J\subset I_i$.

  (ii)    For any $i$, define
    $
    \mathcal J_i:=\{J\in \mathcal J: J\subset I_i\}.
    $ Then
     $(I_i,\mathcal J_i)$ is a $(\varsigma,\epsilon, M,C,h)$-configuration.

\end{defi}
See  Figure \ref{fig-k-rho-config} for an example of a  $(3,\rho;\varsigma,\epsilon, M, C,h)$-configuration.

\begin{figure}
		\includegraphics[scale=0.8]{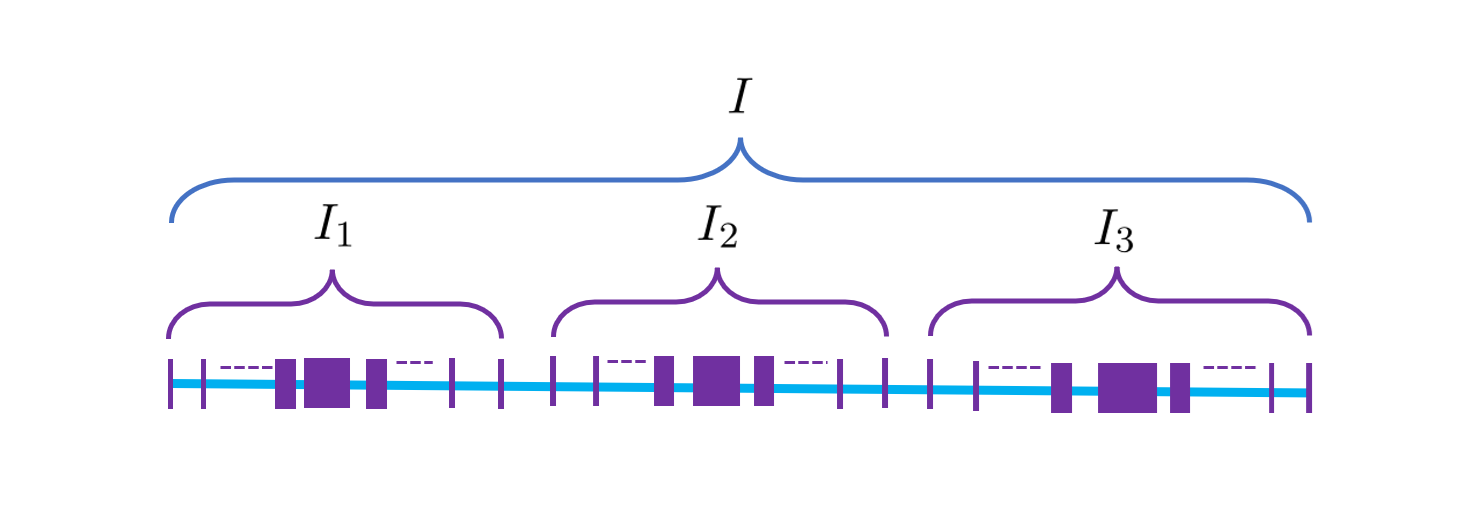}
		
		\caption{A $(3,\rho;\varsigma,\epsilon, M, C,h)$-configuration}\label{fig-k-rho-config}
 \end{figure}
\begin{rem}\label{type2f}

  (i) We call the family $\{(I_i,\mathcal J_i):1\le i\le k\}$ the sub-$(\varsigma,\epsilon, M,C,h)$-configurations of $(I,\mathcal J).$

  (ii)
  If $k=1$, then  a $(k,\rho;\varsigma,\epsilon, M,C,h)$-configuration is just a $(\varsigma,\epsilon, M,C,h)$-configuration.

\end{rem}

We have the following observation, which is crucial later for estimating the dimensions of the spectrum.

\begin{lem}\label{pre-dim-delta}
Given $k\in \N$, $\rho\in (0,1)$. Assume $\varsigma,\epsilon, M, C, h$ satisfy \eqref{parameter}. Assume  $(I,\mathcal J)$ is a $(k,\rho; \varsigma,\epsilon,M,C,h)$-configuration. Let $\hat h$ be the constant in Lemma \ref{lenth-of-band}.  Then

(1)  There exists $\tilde h=\tilde h(C,\varsigma,\rho)\in (0,\hat h]$ such that if $h\in (0,\tilde h] $, then for any $J\in \mathcal J$,
\begin{equation*}
  \frac{\rho}{8ke^{C/h}}\le \frac{|J|}{|I|}\le \frac{Ch}{2k\rho \varsigma}\le \frac{1}{10}.
\end{equation*}

(2) Assume $h\in (0,\tilde h]$. Assume $r>0$ satisfies
$\mathcal J_{\min}\le r<|I|$. Then
$$N_r(I)\le \frac{16k e^{C/h}}{\rho}.$$

(3) Fix $\kappa\in \N.$ Then for any $\delta\in(0,1)$  there exists $h(\delta)\in (0,\tilde h]$ such that if $h\in (0,h(\delta)]$ and $1\le k\le \kappa$,   then
\begin{equation}\label{delta}
\sum_{J\in \mathcal J}\left(\frac{|J|}{|I|}\right)^\delta\le 1.
\end{equation}

\end{lem}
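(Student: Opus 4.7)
The plan is to prove the three assertions in order, with (1) and (2) being direct consequences of the definitions and Lemma \ref{lenth-of-band}, while (3) requires a delicate scale-by-scale estimate that realizes the heuristic sketched in Section 1.3.2.

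For Part (1), I will exploit the affine invariance of length ratios. Each sub-configuration $(I_i,\mathcal J_i)$ is a $(\varsigma,\epsilon,M,C,h)$-configuration, so there is an affine map $T_i$ with $(T_i(I_i),T_i(\mathcal J_i))$ standard. Condition (i) of Definition \ref{config-standard} gives $2\varsigma\le |T_i(I_i)|\le 8$, while Lemma \ref{lenth-of-band} (applicable since $h\le \hat h$) yields $e^{-C/h}\le |T_i(J)|\le Ch$ for every $J\in\mathcal J_i$. Ratios being affine invariants, $|J|/|I_i|\in [e^{-C/h}/8,\,Ch/(2\varsigma)]$; multiplying by $|I_i|/|I|\in[\rho/k,\,1/(\rho k)]$ from Definition \ref{config-kappa-rho}(i) delivers the two-sided bound. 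The requirement $|J|/|I|\le 1/10$ is enforced by taking $\tilde h\le \rho\varsigma/(5C)$ in addition to $\tilde h\le \hat h$. Part (2) is then immediate: the lower bound from (1) gives $\mathcal J_{\min}\ge \rho|I|/(8k e^{C/h})$, hence $|I|/r\le 8ke^{C/h}/\rho$, and a trivial cover of $I$ by consecutive intervals of length $r$ uses at most $\lceil |I|/r\rceil\le 16k e^{C/h}/\rho$ of them.

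For Part (3), the main point, I will first reduce to the standard case. Writing
\begin{equation*}
\sum_{J\in\mathcal J}\Bigl(\tfrac{|J|}{|I|}\Bigr)^{\!\delta}
=\sum_{i=1}^{k}\Bigl(\tfrac{|I_i|}{|I|}\Bigr)^{\!\delta}\sum_{J\in\mathcal J_i}\Bigl(\tfrac{|J|}{|I_i|}\Bigr)^{\!\delta},
\end{equation*}
concavity of $t\mapsto t^\delta$ combined with $\sum_i|I_i|\le |I|$ gives $\sum_i(|I_i|/|I|)^\delta\le k^{1-\delta}\le \kappa^{1-\delta}$, and affine invariance lets me identify each inner sum with the corresponding sum for a standard configuration. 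Hence it suffices to prove that for every standard $(\varsigma,\epsilon,M,C,h)$-configuration $(I',\mathcal J')$ one has $\Sigma(h,\delta):=\sum_{J\in\mathcal J'}(|J|/|I'|)^\delta\to 0$ as $h\downarrow 0$, and then to choose $h(\delta)\in(0,\tilde h]$ so small that $\Sigma(h,\delta)\le\kappa^{\delta-1}$. Using $|I'|\ge 2\varsigma$, I split the sum according to $\mathscr I=\mathscr I_{in}\sqcup\mathscr I_{out}\sqcup\mathscr I_{mid}$. Definition \ref{config-standard}(iv) bounds the inside part by $(Ch/(2\varsigma))^\delta+(-2C\log h)\bigl(Ch/(-\log h\cdot 2\varsigma)\bigr)^\delta\lesssim h^\delta(\log h^{-1})^{1-\delta}$; Definition \ref{config-standard}(v) bounds the outside part by $2Ch^{-1}\bigl(e^{-1/(Ch)}/(2\varsigma)\bigr)^\delta$, which decays stretched-exponentially. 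Both tend to $0$ as $h\downarrow 0$.

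The hard part is the middle sum $S_{mid}$, because the band lengths and gap lengths in Definition \ref{config-standard}(vi) vary over many scales within $[Mh,\epsilon]$. I will handle it by a dyadic decomposition: for $Mh\le t_j:=2^j Mh\le \epsilon$, the gap lower bound $|G_i|\ge h/(-C\log|g_i|)$ limits the number of bands with $|c_i|\in[t_j,2t_j]$ to $O\bigl(t_j(-\log t_j)/h\bigr)$, while the length upper bound gives $|J_i|/|I'|\lesssim h\,e^{-t_j/(Ch)}/\bigl((-\log t_j)\varsigma\bigr)$. Summing over $j$ and comparing to an integral in $u=t/h$, the factor $e^{-\delta u/C}$ confines the mass to bounded $u$, yielding $S_{mid}\lesssim h^\delta(\log h^{-1})^{1-\delta}\to 0$. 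Combining the three estimates completes (3). All the subtlety lies in organizing this multi-scale sum so that the exponential gain from $e^{-\delta t/(Ch)}$ dominates the polynomial growth in the number of bands; this is the only step where choosing $h$ small depending on both $\delta$ and $\kappa$ is essential.
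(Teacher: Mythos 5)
Your proof is correct and follows essentially the same route as the paper: parts (1)--(2) are obtained from Lemma \ref{lenth-of-band} together with the length-ratio bounds in Definition \ref{config-kappa-rho}, and part (3) is obtained by reducing to a standard configuration, splitting the sum into $S_{in}+S_{out}+S_{mid}$, and estimating $S_{mid}$ via a multi-scale decomposition in the center position $|c_i|$ (dyadic in your version, $e$-adic with $\mathscr I_{mid}^l=\{i: e^{-l-1}<|c_i|\le e^{-l}\}$ in the paper's). The only variations are cosmetic: you use concavity of $t\mapsto t^\delta$ to bound $\sum_i(|I_i|/|I|)^\delta\le k^{1-\delta}$ where the paper uses the slightly cruder $k^{1-\delta}\rho^{-\delta}$, and the base of the geometric scale decomposition differs; both lead to the same conclusion $S_{mid}\lesssim h^\delta(\log h^{-1})^{1-\delta}\to 0$.
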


\begin{proof}
(1) Define $\tilde h$ as
\begin{equation}\label{def-tilde-h}
  \tilde h:=\min\left\{\hat h, \frac{2\rho\varsigma}{10C}\right\}.
\end{equation}

Fix any $h\in(0,\tilde h]$. Assume $J_\ast\in \mathcal J_i$ is such that $|J_\ast|=\mathcal J_{\min}.$
By Definition \ref{config-kappa-rho}, $(I_i,\mathcal J_i)$ is a $( \varsigma,\epsilon,M,C,h)$-configuration and $|I_i|/|I|\ge \rho/k$.  So by Lemma \ref{lenth-of-band},
\begin{align*}
  \frac{\mathcal J_{\min}}{|I|} & =\frac{|J_\ast|}{|I_i|}\frac{|I_i|}{|I|}\ge\frac{e^{-C/h}}{8}\frac{\rho}{k}=\frac{\rho}{8ke^{C/h}}.
\end{align*}
Similarly, assume $J^\ast\in \mathcal J_j$  is such that $|J^\ast|=\mathcal J_{\max}.$
By Definition \ref{config-kappa-rho}, $(I_j,\mathcal J_j)$ is a $( \varsigma,\epsilon,M,C,h)$-configuration and $|I_j|/|I|\le 1/k\rho$.  So by Lemma \ref{lenth-of-band},
\begin{align*}
  \frac{\mathcal J_{\max}}{|I|} & =\frac{|J^\ast|}{|I_j|}\frac{|I_j|}{|I|}\le\frac{Ch}{2\varsigma}\frac{1}{k\rho}=\frac{Ch}{2k\rho \varsigma}\le \frac{1}{10}.
\end{align*}
So the result follows.

(2) By (1), we have
\begin{align*}
N_r(I) \le \frac{|I|}{r} +1& \le \frac{|I|}{\mathcal J_{\min}}+1\le \frac{8k e^{C/h}}{\rho}+1\le\frac{16k e^{C/h}}{\rho}.
\end{align*}

(3)
Assume $\{I_1,\cdots,I_k\}$ are the  subintervals in Definition \ref{config-kappa-rho}(i). Since $k\le \kappa$, we have
$$
\sum_{J\in \mathcal J}\left(\frac{|J|}{|I|}\right)^\delta=\sum_{i=1}^k\sum_{J\in \mathcal J_i}\left(\frac{|J|}{|I_i|}\right)^\delta\left(\frac{|I_i|}{|I|}\right)^\delta\le \frac{\kappa}{\rho^{\delta}}\max_{1\le i\le k}\sum_{J\in \mathcal J_i}\left(\frac{|J|}{|I_i|}\right)^\delta.
$$
By Definition \ref{config-kappa-rho}(ii), each $(I_i, \mathcal J_i)$ is a $(\varsigma,\epsilon,M,C,h)$-configuration.
Thus to get \eqref{delta}, we only need to show that for a standard $(\varsigma,\epsilon, M, C,h)$-configuration  $(I,\mathcal J)$, we have
\begin{equation}\label{delta-1}
  \sum_{J\in \mathcal J}\left(\frac{|J|}{|I|}\right)^\delta\le\frac{\rho^\delta}{\kappa}.
\end{equation}

Now assume  $(I,\mathcal J)$ is a standard $(\varsigma,\epsilon, M, C,h)$-configuration. By Definition \ref{config-standard}(i), we have $|I|\ge 2\varsigma$. To show \eqref{delta-1}, we only need to show that
\begin{equation}\label{delta-2}
  S(\delta):=\sum_{J\in \mathcal J}|J|^\delta\le\frac{(2\varsigma\rho)^\delta}{\kappa}.
\end{equation}

We  split the sum in \eqref{delta-2} into three parts and estimate them separately.
Write
\begin{equation*}
  S(\delta)=\sum_{J\in \mathcal J}|J|^\delta=\sum_{i\in \mathscr I_{in}}|J_i|^\delta+\sum_{i\in \mathscr I_{out}}|J_i|^\delta+\sum_{i\in \mathscr I_{mid}}|J_i|^\delta=: S_{in}(\delta)+ S_{out}(\delta)+ S_{mid}(\delta).
\end{equation*}
By Definition \ref{config-standard}(iii) and (iv), we have
\begin{align}\label{S-in}
  S_{in}(\delta) & =\sum_{-r_1\le i\le s_1}|J_i|^\delta\le (Ch)^\delta+(-2C\log h)\left(\frac{Ch}{-\log h}\right)^\delta.
\end{align}
By Definition \ref{config-standard}(ii) and (v), we have
\begin{align}\label{S-out}
  S_{out}(\delta) & =\sum_{i\in \mathscr I_{out}}|J_i|^\delta\le 2\frac{C}{h}\exp\left(\frac{-\delta}{Ch}\right).
\end{align}
By \eqref{S-in} and \eqref{S-out} it is seen that there exists $h_1\in (0,\tilde h]$ such that for any $h\in (0,h_1]$ we have
\begin{equation}\label{est-S-in-out}
  S_{in}(\delta),\ \ \  S_{out}(\delta)\le \frac{(2\varsigma\rho)^\delta}{3\kappa}.
\end{equation}

The estimate of $S_{mid}(\delta)$ is more complex. We need to split the sum further according to different scales. We proceed as follows. Assume $L=L(h)\in \N$ is such that
\begin{equation*}
  e^{-L-1}< h\le e^{-L}.
\end{equation*}
Recall that $c_i$ is the center of $J_i.$
For any $1\le l\le L$, define
$$
\mathscr I_{mid}^l:=\{i\in \mathscr I_{mid}: e^{-l-1}<|c_i|\le e^{-l}\}.
$$
Since $e^{-L-1}<h< Mh\le|c_i|\le \epsilon<e^{-1}$, we have
$$
\mathscr I_{mid}=\bigsqcup_{l=1}^L \mathscr I_{mid}^l.
$$
By Definition \ref{config-standard}(vi), if $i\in \mathscr I_{mid}^l$, we have
\begin{align*}
  |J_i| & \le C \exp(-e^{L-l}/eC)\frac{h}{l}.
\end{align*}
Now we estimate the cardinality of $\mathscr I_{mid}^l$. If $i-1,i\in \mathscr I_{mid}^l$, then
$$
e^{-l-1}<|c_{i-1}|<|g_i|<|c_i|\le e^{-l}.
$$
By Definition \ref{config-standard}(vi), we have
$$
|G_i|\ge \frac{h}{-C\log |g_i|}\ge \frac{h}{C(l+1)}.
$$
From this we conclude that
\begin{equation*}
  \#\mathscr I_{mid}^l\le \frac{C(l+1)(e^{-l}-e^{-l-1})}{h}+2\le \frac{6Cle^{-l}}{h}.
\end{equation*}
Consequently, we have
\begin{align*}\label{S-in}
  S_{mid}^{(l)}(\delta) :&=\sum_{i\in \mathscr I_{mid}^l}|J_i|^\delta\le \frac{6Cle^{-l}}{h}
  C^\delta \exp(-e^{L-l}\delta/eC)\frac{h^\delta}{l^\delta}\\
  &=6C^{1+\delta}l^{1-\delta}e^{-l}h^{\delta-1}\exp(-e^{L-l}\delta/eC)\\
  &\le 6C^{2}le^{-l}\exp((L+1)(1-\delta)-e^{L-l}\delta/eC)\\
  &\le 6eC^{2}le^{-l}\exp(L(1-\delta)-e^{L-l}\delta/eC).
\end{align*}
If $1\le l\le L(1-\delta/2)$, then
\begin{equation*}
  S_{mid}^{(l)}(\delta)\le 6eC^{2}L\exp(L-e^{\delta L/2}\delta/eC).
\end{equation*}
If $ l> L(1-\delta/2)$, then
\begin{equation*}
  S_{mid}^{(l)}(\delta)\le 6eC^{2}Le^{-L(1-\delta/2)}\exp(L(1-\delta))=6eC^{2}Le^{-L\delta/2}.
\end{equation*}
From these, it is seen that there exists $h(\delta)\in (0,h_1)$ such that for any $h\in (0,h(\delta)]$ and any $1\le l\le L$ we have
\begin{equation}\label{est-S-mid-k}
  S_{mid}^{(l)}(\delta)\le \frac{(2\varsigma\rho)^\delta}{3L\kappa}.
\end{equation}
Consequently, we have
\begin{equation}\label{est-S-mid}
 S_{mid}(\delta)=\sum_{l=1}^{L} S_{mid}^{(l)}(\delta)\le \frac{(2\varsigma\rho)^\delta}{3\kappa}.
\end{equation}
Combine with  \eqref{est-S-in-out} and \eqref{est-S-mid}, we get \eqref{delta-2}.
\end{proof}


%

\section{ Dimension estimates for the limit set of a nested covering structure}\label{sec-nested-covering}

Motivated by the covering structure of the spectrum $\Sigma_\alpha$ obtained in \cite{HS1,HS2,HS3}, in this section, we define an abstract nested covering structure with the special configurations as their building blocks.  This nested covering structure naturally determines a limit set.  Due to the fine metrical control of the configurations,  we can obtain the upper bounds for the fractal  dimensions of the limit set.  In the next section, we will apply the result to the spectrum of critical AMO.

 \subsection{ An abstract  language}\label{sec-symbolic}\

 At first we define an abstract language, which will be used as the index set for the abstract  nested coverings.
 It is good to keep in mind that the following definitions are intended to give  a coding for a  $(k,\rho; \varsigma,\epsilon,M,C,h)$-configuration. The types are related to  the types of bands in $\tilde \B_n$ (see Paragraph  \ref{sec-struc-spec}).

 We define the alphabet for the types as
\begin{equation*}
  \mathscr T:=\{\mathbf 1, \mathbf 2\}.
\end{equation*}

Define  $\Omega_0$ as
$$
\Omega_0:=\{\emptyset_{\mathbf t}\},\ \  \ \text{where }\ \  \mathbf t=\1 \text{ or } \2.
$$
We call $\mathbf t$ the {\it type} of the word $\emptyset_{\mathbf t}$.

Next, we define $\Omega_1$ as follows. We fix the only word $w=\emptyset_{\mathbf t}\in \Omega_0$. Choose $k_w\in \N$, and for any $1\le k\le k_w$, choose  $r_w^{[k]}, s_w^{[k]}\in \N$ and write
$$
I_w^{[k]}:=\{i\in \Z:-r_w^{[k]}\le i\le s_w^{[k]}, i\ne 0\}.
$$
Define the alphabets
$$
\mathscr A_w^{[k]}:=\{i_\1^{[k]}: i\in I_w^{[k]}\}\cup \{0_\2^{[k]}\}\ \ \text{ and }\ \
\mathscr A_w:=\bigcup_{1\le k\le k_w}\mathscr A_w^{[k]}.
$$
Then define
$$
\Omega_1:=\{w e: w\in \Omega_0, e\in \mathscr A_{w}\}.
$$

%
%
 Assume $\Omega_n$ has been defined, we define $\Omega_{n+1}$ as follows.
Fix any $w\in \Omega_n$,
choose $k_w\in \N$, and for  any $1\le k\le k_w$, choose  $r_w^{[k]}, s_w^{[k]}\in \N$. We write
$$
I_w^{[k]}:=\{i\in \Z:-r_w^{[k]}\le i\le s_w^{[k]},i\ne0\}
$$
and define the alphabets
$$
\mathscr A_w^{[k]}:=\{i_\1^{[k]}: i\in I_w^{[k]}\}\cup \{0_\2^{[k]}\}\ \ \text{ and }\ \
\mathscr A_w:=\bigcup_{1\le k\le k_w}\mathscr A_w^{[k]}.
$$
We define $\Omega_{n+1}$ as
$$
\Omega_{n+1}:=\{w e: w\in \Omega_n; e\in \mathscr A_w\}.
$$
By induction, we have defined a set $\Omega_n$ for any $n\in\Z_+$. Finally we write
$$
\Omega_\ast:=\bigcup_{n\in \Z_+}\Omega_n.
$$
We call $\Omega_\ast$ a {\it language} and call any $w\in \Omega_\ast$ a {\it word}.

For any $w\in \Omega_\ast$ and any $e=i^{[k]}_{\mathbf t}\in \mathscr A_w$, we call $\mathbf t, k, i$ the {\it type}, the {\it global index}, the {\it local index} of $e$, respectively and  will use the notations
\begin{equation*}
  T_e:=\mathbf t;\ \ \ G_e:=k;\ \ \ L_e:=i.
\end{equation*}

For any $w\in \Omega_\ast$, we also introduce an order on $\mathscr A_w$ as follows: for $e,\hat e\in \mathscr A_w$ and $e\ne \hat e$,
$$
e\prec \hat e\Leftrightarrow G_e<G_{\hat e} \text{ or } \ G_e=G_{\hat e}, L_e<L_{\hat e}.
$$

For any $w=w_0\cdots w_n\in \Omega_n$, we define $T_w:=T_{w_n}$ and  say that $w$ has type $T_{w_n}.$  We also write $|w|=n$ and say that $w$ is a word of length $n.$  For any $0\le i\le n$, we  write $w|_i:=w_0\cdots w_i$ and call it the $i$-th prefix of $w.$

\subsection{ A nested covering structure coded by $\Omega_\ast$}\

Assuming  $\Omega_\ast$ is a language defined  in the previous subsection, we now describe a nested covering structure indexed by $\Omega_\ast$.

\begin{defi}
 Given a language $\Omega_\ast$.  Assume $\mathcal I(\Omega_\ast):=\{I_w: w\in \Omega_\ast\}$ is a class of compact intervals in $\R$. We say that  $\mathcal I(\Omega_\ast)$  is  a nested covering structure if
the following hold:

(i) For any $w\in \Omega_*$,  the pair $(I_w,\mathcal J_w)$ is a configuration, where $\mathcal J_w:=\{I_{we}: e\in \mathscr A_w\}.$ Moreover, for different $e,\hat e\in \mathscr A_w,$
\begin{equation*}
  I_{we}<I_{w\hat e} \Leftrightarrow e\prec \hat e.
\end{equation*}

(ii) $\max\{|I_w|: w\in \Omega_n\}\to 0$ as $n\to\infty$.

\end{defi}

Finally, if $\mathcal I(\Omega_\ast)$ is a nested covering structure, we define the limit set of  $\mathcal I(\Omega_\ast)$ as
\begin{equation}\label{def-lim-set}
  X(\Omega_\ast):=\bigcap_{n\ge0} \bigcup_{w\in \Omega_n} I_w.
\end{equation}

Assume $\mathcal I(\Omega_\ast)$ is a nested covering structure. For each $w\in \Omega_*$ and $1\le k\le k_w$, we write
$$
\mathcal J_w^{[k]}:=\{I_{we}: e\in \mathscr A_w^{[k]}\} \ \ \text{ and }\ \ I_w^{[k]}:={\rm Co}(\bigcup_{J\in \mathcal J_w^{[k]}}J),
$$
where ${\rm Co}(A)$ denotes the convex hull of $A$. Then  $(I_w^{[k]}, \mathcal J_w^{[k]}), \ 1\le k\le k_w$ are all configurations and
$$\mathcal J_w=\bigsqcup_{1\le k\le k_w}\mathcal J_w^{[k]}.$$

\subsection{Upper bounds of the dimensions for limit sets}\

Now we can state the main result of this section:

\begin{thm}\label{abstract-dim-bd}
  Assume $\kappa\in \N$ and $\rho\in (0,1)$. Assume $\varsigma, \epsilon, M,C$ satisfy \eqref{parameter}. Fix any $\delta\in (0,1)$, let $h(\delta)$ be the constant given by Lemma \ref{pre-dim-delta}(3). Assume $\Omega_\ast$ is a language  and $\mathcal I(\Omega_\ast)$ is a nested covering structure such that for any $w\in \Omega_\ast$,   there exists $h_w>0$ such that $(I_w, \mathcal J_w)$ is a $(k_w,\rho; \varsigma,\epsilon,M,C,h_w)$-configuration with
  $(I_w^{[k]}, \mathcal J_w^{[k]}), \ 1\le k\le k_w$  the related sub $(\varsigma,\epsilon,M,C,h_w)$-configurations.

  (1) If $k_w\le \kappa$ and $h_w\in (0,h(\delta)]$ for any $w\in \Omega_\ast$, then
  $$\dim_H X(\Omega_\ast)\le \delta.$$

  (2) Fix any $h'(\delta)\in (0,h(\delta)]$.  If $k_w\le \kappa$ and $h_w\in [h'(\delta),h(\delta)]$ for any $w\in \Omega_\ast$, then
  $$\overline{\dim}_B X(\Omega_\ast)\le \delta.$$
\end{thm}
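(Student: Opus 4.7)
The strategy is to extract from Lemma \ref{pre-dim-delta} a pointwise ``mass distribution'' inequality at every branching of $\Omega_\ast$ and then pick, for each of the two dimensions, the appropriate stopping layer in the tree. Because each $(I_w,\mathcal J_w)$ is a $(k_w,\rho;\varsigma,\epsilon,M,C,h_w)$-configuration with $k_w\leq\kappa$ and $h_w\leq h(\delta)$, Lemma \ref{pre-dim-delta}(3) directly yields
\[
\sum_{e\in\mathscr A_w}|I_{we}|^{\delta}\leq |I_w|^{\delta}\qquad(w\in\Omega_\ast).
\]
Call a finite antichain $\mathcal W\subset\Omega_\ast$ a \emph{cut set} if every infinite branch of $\Omega_\ast$ has a (unique) prefix lying in $\mathcal W$. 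A short induction on the maximal depth of $\mathcal W$---replacing, at the deepest layer, a complete family of siblings $\{ve:e\in\mathscr A_v\}\subset\mathcal W$ by $v$ (which, by the displayed inequality, does not decrease the $\delta$-mass and strictly decreases the depth)---upgrades the local inequality to
\[
\sum_{w\in\mathcal W}|I_w|^{\delta}\leq|I_{\emptyset_{\mathbf t}}|^{\delta}=:C_0.
\]

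For (1), axiom (ii) of the nested covering structure ensures that $\Omega_n$ is itself a cut set with $\max_{w\in\Omega_n}|I_w|\to 0$. Thus $\{I_w:w\in\Omega_n\}_{n\geq 0}$ is a sequence of covers of $X(\Omega_\ast)$ of vanishing diameter whose $\delta$-mass is uniformly bounded by $C_0$, so the $\delta$-dimensional Hausdorff measure of $X(\Omega_\ast)$ is finite and $\dim_H X(\Omega_\ast)\leq\delta$.

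For (2) I would additionally exploit the child-to-parent ratio bounds of Lemma \ref{pre-dim-delta}(1). Since $k_w\leq\kappa$ and $h_w\geq h'(\delta)$,
\[
\eta:=\frac{\rho}{8\kappa\,\exp(C/h'(\delta))}\leq\frac{|I_{we}|}{|I_w|}\leq\frac{1}{10}
\]
uniformly in $w\in\Omega_\ast$ and $e\in\mathscr A_w$. For $0<r<|I_{\emptyset_{\mathbf t}}|$, writing $w^-:=w|_{|w|-1}$, define
\[
\mathcal W(r):=\bigl\{w\in\Omega_\ast\setminus\Omega_0:\,|I_w|<r\leq|I_{w^-}|\bigr\}.
\]
The upper ratio $\tfrac{1}{10}$ together with axiom (ii) makes $\mathcal W(r)$ a cut set, since every infinite branch eventually satisfies $|I_w|<r$ and its smallest such prefix lies in $\mathcal W(r)$. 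Every $w\in\mathcal W(r)$ has $|I_w|<r$, so $I_w$ is covered by a single interval of length $r$, while $|I_w|\geq\eta|I_{w^-}|\geq\eta r$. Combining with the cut-set inequality,
\[
N_r\bigl(X(\Omega_\ast)\bigr)\leq\#\mathcal W(r)\leq(\eta r)^{-\delta}\sum_{w\in\mathcal W(r)}|I_w|^{\delta}\leq C_0\,\eta^{-\delta}\,r^{-\delta},
\]
and letting $r\to 0$ gives $\overline{\dim}_B X(\Omega_\ast)\leq\delta$.

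The only genuine obstacle is the cut-set inequality itself: one has to check that at the deepest level of any cut set, every parent with at least one child inside $\mathcal W$ in fact has \emph{all} of its children inside $\mathcal W$, so that the local bound can be applied to collapse siblings simultaneously. This is forced by the antichain-plus-branch-covering property (any missing sibling would push a strictly deeper vertex into $\mathcal W$, contradicting that the current layer is the deepest), after which the whole argument is the standard mass distribution principle on a tree.
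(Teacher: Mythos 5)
Your proposal is correct, and the overall strategy (a mass distribution bound on a stopping set in the tree, combined with the metrical control of Lemma \ref{pre-dim-delta}) is the same as the paper's. Part (1) is essentially identical to the paper's argument. For part (2) you make a genuinely different, dual, choice of stopping set: you stop at the \emph{first} nodes whose interval has length $< r$, so each $I_w$ is covered by a single $r$-interval, and you use the lower ratio bound from Lemma \ref{pre-dim-delta}(1) to get $|I_w|\geq\eta r$ and hence $\#\mathcal W(r)\leq C_0\eta^{-\delta}r^{-\delta}$. The paper instead stops one level earlier, at the \emph{last} nodes $u$ with $|I_u|>r$ and $(\mathcal J_u)_{\min}\leq r$, and then invokes Lemma \ref{pre-dim-delta}(2) to bound $N_r(I_u)$ by a constant depending on $\kappa,\rho,C,h'(\delta)$. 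Both give $N_r(X(\Omega_\ast))\lesssim r^{-\delta}$; your version avoids \ref{pre-dim-delta}(2) entirely and is arguably a bit slicker. You also isolate the cut-set inequality $\sum_{w\in\mathcal W}|I_w|^{\delta}\leq|I_{\emptyset_{\mathbf t}}|^{\delta}$ as a reusable lemma proved bottom-up by collapsing the deepest layer; the paper proves the same inequality inline, top-down, by repeatedly splitting off $\mathcal U_{m_\ast}$ and expanding the remainder into children. Your note that at the deepest level every sibling of a stopping node must itself be a stopping node (otherwise a missing sibling would force a strictly deeper vertex into the cut set) correctly fills the one point that needs checking. The only small thing left tacit is that a cut set actually yields a cover of $X(\Omega_\ast)$: this is because, for $N$ large, every $w\in\Omega_N$ has its (unique) ancestor in the cut set, so $I_w$ is contained in one of the stopping intervals; you should say this explicitly, as the paper does.
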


\begin{proof}
  (1)   For any $n$, $\{I_w: w\in \Omega_n\}$ forms a covering of $X(\Omega_\ast)$. Moreover, by Lemma \ref{pre-dim-delta}(1) we have
\begin{equation}\label{upper-est-n-band}
|I_w|=|I_{\emptyset_{\mathbf t}}|\prod_{i=1}^n \frac{|I_{w|_i}|}{|I_{w|_{i-1}}|}\le\frac{|I_{\emptyset_{\mathbf t}}|}{10^n}\to 0.
\end{equation}

We claim that for any $n\in \N,$
\begin{equation}\label{sim-dim}
  \sum_{w\in \Omega_n}|I_w|^\delta\le |I_{\emptyset_{\mathbf t}}|^\delta.
\end{equation}

We show it by induction. When $n=0,$ it is trivially true.

Now assume the statement is true for $n-1$. We have
\begin{align*}
  \sum_{u\in \Omega_n} |I_u|^\delta & =\sum_{w\in\Omega_{n-1}}\sum_{e\in \mathscr A_w}|I_{we}|^\delta =\sum_{w\in\Omega_{n-1}}|I_w|^\delta\sum_{e\in \mathscr A_w}\frac{|I_{we}|^\delta}{|I_w|^\delta}.
\end{align*}
By the assumption,  the pair $(I_w,\{I_{we}:e\in \mathscr A_w\})$ is a $(k_w,\rho;\varsigma,\epsilon,M,C,h_w)$-configuration with $k_w\le \kappa$ and  $h_w\le h(\delta)$. By Lemma \ref{pre-dim-delta}(3), we have
$$
\sum_{e\in \mathscr A_w}\frac{|I_{we}|^\delta}{|I_w|^\delta}\le 1.
$$
So by induction hypothesis, we have
\begin{align*}
  \sum_{u\in \Omega_n} |I_u|^\delta =\sum_{w\in\Omega_{n-1}}|I_w|^\delta\sum_{e\in \mathscr A_w}\frac{|I_{we}|^\delta}{|I_w|^\delta}\le \sum_{w\in\Omega_{n-1}}|I_w|^\delta\le |I_{\emptyset_{\mathbf t}}|^\delta.
\end{align*}
By induction, the claim holds.

Then by the definition of Hausdorff measure, we have
$$
\mathscr H^\delta(X(\Omega_\ast))\le |I_{\emptyset_{\mathbf t}}|^\delta,
$$
where $\mathscr H^\delta$ denotes $\delta$-dimensional Hausdorff measure. Consequently, $\dim_H X(\Omega_\ast)\le \delta.$

(2)  Assume $0<r<|I_{\emptyset_{\mathbf t}}|$, let us estimate $N_r(X(\Omega_\ast))$. Our strategy is the following: we construct an interval  covering of $X(\Omega_\ast)$ such that each interval in this covering has approximately the size $r$, then we estimate the number of intervals.

At first, we define a subset of  $\Omega_\ast$  as follows. By \eqref{upper-est-n-band},  for any $w\in\Omega_n$ we have
$$
|I_w|\le |I_{\emptyset_{\mathbf t}}|/{10^n}.
$$

Take $N\in \N$ such that $|I_{\emptyset_{\mathbf t}}|/10^N<r.$ For each $w\in \Omega_N$, let $0\le m_w\le N$ be the integer such that
\begin{equation}\label{m-w}
  |I_{w|_{m_w}}|>r, (\mathcal J_{w|_{m_w}})_{\min}\le r.
\end{equation}
Since $|I_{\emptyset_{\mathbf t}}|>r$ and $|I_w|<r$, the number $m_w$ is well-defined and unique. Now define
$$
\mathcal U:=\{w|_{m_w}: w\in \Omega_N\}.
$$

We claim that:  $\{I_u: u\in \mathcal U\}$ is a covering of $X(\Omega_\ast)$ and
$\sum_{u\in \mathcal U}|I_u|^\delta\le |I_{\emptyset_{\mathbf t}}|^\delta.$

Indeed,
by \eqref{def-lim-set}, $\{I_w: w\in \Omega_N\}$ is a covering of $X(\Omega_\ast)$. Since for each $w\in \Omega_N$, we have $u=w|_{m_w}\in \mathcal U$ and $I_w\subset I_u$, so the first statement holds.

Notice that if $u,\hat u\in \mathcal U$ and $u\ne \hat u$, then $u$ and $\hat u$ are non-compatible, that is, neither $u\lhd \hat u$, nor $\hat u\lhd u$ (where $u\lhd v$ means that $u$ is a prefix of $v$). This is due to the uniqueness of $m_w$ in \eqref{m-w}. Now write
$$m_\ast=\min\{|u|: u\in \mathcal U\}\ \ \text{ and }\ \  m^\ast=\max\{|u|: u\in \mathcal U\}.$$ Then $0\le m_\ast\le m^\ast\le N.$ For any $m_\ast\le k\le m^\ast$, write
$$
\mathcal U_k:=\{u\in \mathcal U: |u|=k\}.
$$
Then $\mathcal U=\bigsqcup_{k=m_\ast}^{m^\ast} \mathcal U_k.$
By \eqref{sim-dim} and Lemma \ref{pre-dim-delta}(3), we have
\begin{eqnarray*}
  |I_{\emptyset_{\mathbf t}}|^\delta&\ge& \sum_{u\in \Omega_{m_\ast}}|I_u|^\delta=\sum_{u\in \mathcal U_{m_\ast}}|I_u|^\delta+\sum_{u\in \Omega_{m_\ast}\setminus\mathcal U_{m_\ast}}|I_u|^\delta\\
  &\ge&\sum_{u\in \mathcal U_{m_\ast}}|I_u|^\delta+\sum_{u\in \Omega_{m_\ast}\setminus\mathcal U_{m_\ast}}\sum_{e\in\mathscr A_u}|I_{ue}|^\delta=\sum_{u\in \mathcal U_{m_\ast}}|I_u|^\delta+\sum_{v\in \mathcal V_{m_\ast+1}}|I_v|^\delta,
\end{eqnarray*}
where $\mathcal V_{m_\ast+1}:=\{ue: u\in \Omega_{m_\ast}\setminus\mathcal U_{m_\ast},e\in\mathscr A_u\}$. Since words in $\mathcal U_{m_\ast+1}$ and $\mathcal U_{m_\ast}$ are non compatible, we must have $\mathcal U_{m_\ast+1}\subset \mathcal V_{m_\ast+1}.$ Consequently,
\begin{eqnarray*}
  |I_{\emptyset_{\mathbf t}}|^\delta&\ge& \sum_{u\in \mathcal U_{m_\ast}}|I_u|^\delta+\sum_{v\in \mathcal V_{m_\ast+1}}|I_v|^\delta\\
  &=&\sum_{u\in \mathcal U_{m_\ast}}|I_u|^\delta+\sum_{u\in \mathcal U_{m_\ast+1}}|I_u|^\delta+\sum_{v\in \mathcal V_{m_\ast+1}\setminus\mathcal U_{m_\ast+1}}|I_{v}|^\delta.
\end{eqnarray*}
We can iterate this process. Finally we get
$$
|I_{\emptyset_{\mathbf t}}|^\delta\ge\sum_{k=m_\ast}^{m^\ast} \sum_{u\in \mathcal U_{k}}|I_u|^\delta=\sum_{u\in \mathcal U}|I_u|^\delta.
$$
Hence the second statement of the claim holds.

By \eqref{m-w} and the claim, we have
$$
|I_{\emptyset_{\mathbf t}}|^\delta\ge \sum_{u\in \mathcal U}|I_u|^\delta\ge r^\delta \#\mathcal U.
$$
For each $u\in \mathcal U$, since $(I_u, \mathcal J_u)$ is a $(k_u,\rho; \varsigma,\epsilon,M,C,h_u)$-configuration with $k_u\le \kappa$ and $h'(\delta)\le h_u\le h(\delta)$, by Lemma \ref{pre-dim-delta}(2),we have
$$
N_r(I_u)\le \frac{16k_u e^{C/h_u}}{\rho}\le \frac{16\kappa e^{C/h'(\delta)}}{\rho}.
$$
Since $\{I_u: u\in \mathcal U\}$ is a covering of $X(\Omega_\ast)$, we have
$$
N_r(X(\Omega_\ast))\le \sum_{u\in \mathcal U}N_r(I_u)\le \frac{16\kappa e^{C/h'(\delta)}}{\rho}\#\mathcal U\le \frac{16\kappa e^{C/h'(\delta)}|I_{\emptyset_{\mathbf t}}|^\delta}{\rho}r^{-\delta}.
$$
Then we have
$$
\overline{\dim}_B X(\Omega_\ast)=\limsup_{r\to0}\frac{\log N_r(X(\Omega_\ast))}{-\log r}\le \delta.
$$
So the result follows.
\end{proof}

 \section{The fractal  dimensions  of the spectrum of  critical AMO}\label{sec-dim}

In this section, we apply  Theorem \ref{abstract-dim-bd} to  critical AMO and finish the proof of Theorems~\ref{main-3} and ~\ref{main-2}.

\subsection{ The nested covering  structure  of the spectrum}\

In this part, the nested covering  structure  of the spectrum established in  \cite{HS1,HS2,HS3} is essential. We summarize the  related results in Theorem \ref{key-HS-new} . The statements are extracted from all the three works and are  unified in a way  suitable for our use.   We will  make some remarks on  the connection  of the result and the works \cite{HS1,HS2,HS3}.

 \begin{rem}\label{Z-inv}
     Note that for $\alpha,\beta\in \R\setminus\Q$ with  $\alpha-\beta\in \Z$,  we have  $\Sigma_\alpha=\Sigma_\beta$.  Consequently,  both $\mathscr F_B(\delta)$ and $\mathscr F_H(\delta)$ are $\Z$-invariant.  Hence  in the rest of  this section we will  restrict $\alpha\in [0,1]\setminus \Q.$
 \end{rem}

 Assume that $\alpha\in [0,1]\setminus \mathbb{Q}$ and  has a continued fraction expansion
$$
\alpha=[a_1,a_2,a_3,\cdots].
$$
Let $G:[0,1]\setminus \mathbb{Q}\to [0,1] \setminus \mathbb{Q}$ be the Gauss map defined by
\begin{equation*}
  G(\alpha):=\{1/\alpha\}.
\end{equation*}
For any $n\in \N$,  define
\begin{equation*}
  h_n(\alpha):=2\pi[a_n,a_{n+1},a_{n+2},\cdots]=2\pi G^{n-1}(\alpha).
\end{equation*}

%
%
%
%

 \begin{thm}[\cite{HS1,HS2,HS3}] \label{key-HS-new}
 Let   $\hat m \in \Z_+$  and $\hat M \geq 2$. Then
there exist $\epsilon_0 >0$, $ \varsigma \in (0,4)$ and $\kappa\in \N$
such that,  for any $\epsilon \in (0,\epsilon_0]$, there exist $h_0>0$,
$M> C> 1$, $\rho \in (0,1)$ such that if $\alpha=[a_1,a_2,\cdots] \in
[0,1]\setminus\Q$ satisfies
$$ 1\leq a_n \leq \hat M \mbox{ for } 1\le n \leq \hat m;$$
 $q_{\hat m}(\alpha) $ is odd, where $$p_{\hat m}(\alpha)/q_{\hat m}(\alpha):=[a_1,\cdots, a_{\hat m}]$$ is the $\hat m$-th convergent of $\alpha,$ and
    $$
    h_n(\alpha)\in(0, h_0],\ \ \ (\forall n \geq \hat m +1)\,,
    $$
then the following hold:

(1) There exists a family of  disjoint  intervals $\{I_1,\cdots, I_{q_{\hat m}(\alpha)}\}$   such that
$$
\Sigma_{\alpha}=\bigcup_{1\le i\le q_{\hat m}(\alpha)}\Sigma_\alpha^{(i)},\ \ \ \text{where }\ \ \ \Sigma_\alpha^{(i)}:=\Sigma_\alpha\cap I_i.
$$

(2) For each $1\le i\le q_{\hat m}(\alpha)$, there exist a language $\Omega_\ast^{(i)}$ and a related nested covering structure $\mathcal I(\Omega_\ast^{(i)})$ such that the following hold:

(2-1) The limit set of $\mathcal I(\Omega_\ast^{(i)})$ is exactly $\Sigma_\alpha^{(i)}$,  that is,
   $$
   X(\Omega_\ast^{(i)})=\Sigma_\alpha^{(i)}.
   $$

(2-2) For each $n\ge1$ and  $w\in \Omega_{n-1}^{(i)}$,  the pair
   $(I_w,\mathcal J_w)$ is a $(k_w,\rho;\varsigma,\epsilon,M,C,h_{n+\hat m}(\alpha))$-configuration with  $k_w\le \kappa$, and with
  $$(I_w^{[k]}, \mathcal J_w^{[k]}), \ 1\le k\le k_w$$  the related sub $(\varsigma,\epsilon,M,C,h_{n+\hat m}(\alpha))$-configurations.

(2-3) If $T_w=\1$, then $k_w=1.$
\end{thm}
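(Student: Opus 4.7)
The plan is to extract and unify the statement from the Helffer--Sj\"ostrand semiclassical iterations in \cite{HS1,HS2,HS3}, rephrasing their output in the configuration language of Section \ref{sec-config}. The guiding idea is that once $a_1,\dots,a_{\hat m}$ are bounded by $\hat M$, the first $\hat m$ steps of the renormalization contribute only finitely many universal constants, which can be absorbed into $\epsilon_0,\varsigma,\kappa,h_0,M,C,\rho$; the genuinely semiclassical regime that drives the nested structure is switched on only because of the hypothesis $h_n(\alpha)\le h_0$ for $n\ge \hat m+1$.

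For item (1) I would apply the initial HS approximation of \cite{HS1} iteratively $\hat m$ times. Each step replaces the spectrum by that of a new AMO-like operator with a smaller effective Planck constant, and the parity assumption that $q_{\hat m}(\alpha)$ is odd is the arithmetic input which forces the image after $\hat m$ steps to split into exactly $q_{\hat m}(\alpha)$ disjoint intervals $I_1<\dots<I_{q_{\hat m}(\alpha)}$ with $\Sigma_\alpha^{(i)}=\Sigma_\alpha\cap I_i$; without odd parity one would have to pair adjacent bands and the decomposition of (1) would not be this clean.

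For (2-1) and (2-2), I would fix one piece $\Sigma_\alpha^{(i)}$ and iterate the HS scheme of \cite{HS2,HS3} inside it. At the $n$-th step the effective Planck constant is $h=h_{n+\hat m}(\alpha)\le h_0$, so inside every current band $B$ the semiclassical analysis produces an ordered family of sub-bands whose cardinality, central-band width, typical band lengths and inter-band gaps match (after affine normalisation to $[-4,4]$) the quantitative estimates of Definition \ref{config-standard}(ii)--(vi); the multi-scale behaviour of the index sets $\mathscr I_{in},\mathscr I_{out},\mathscr I_{mid}$ is exactly the content of the refined ``black box'' analysis in \cite{HS3}. Encoding band positions by local indices and the at most $\kappa$ sub-configurations produced at each step by global indices yields the language $\Omega_\ast^{(i)}$ of Section \ref{sec-symbolic} together with the nested covering $\mathcal I(\Omega_\ast^{(i)})$. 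The identity $X(\Omega_\ast^{(i)})=\Sigma_\alpha^{(i)}$ would then follow because the HS iteration is exhaustive (every spectral point lies in every level) and the band diameters shrink uniformly to zero.

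The hardest part, and the one I expect to consume most of the effort, is securing the uniformity demanded by (2-2) and the type dichotomy (2-3): the integer bound $\kappa$, the ratio bound $\rho$ and the semiclassical constants $M,C$ must all be independent of $n$ and of the branch of the tree. In the dictionary I have in mind, type-$\1$ bands regenerate a single sub-configuration ($k_w=1$), while type-$\2$ bands are those for which the HS reduction produces several (uniformly boundedly many) Harper-like sub-operators whose combined spectra fill $\mathcal J_w$. Establishing the uniformity amounts to tracking through \cite{HS2,HS3} that each reduction is a bounded composition of two elementary HS moves, and that, under $h_n(\alpha)\le h_0$ for all $n\ge \hat m+1$, every constant appearing in Section \ref{sec-config} propagates without degradation, possibly after enlarging $C$ and shrinking $\rho$ and $h_0$ once at the very end.
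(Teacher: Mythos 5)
Your overall reading of the theorem as a translation of the Helffer--Sj\"ostrand renormalization into the configuration language of Section~\ref{sec-config}, and your identification of the uniformity of $\kappa,\rho,M,C$ across the tree as the delicate point, are both in line with the paper's intent. Your description of the type dichotomy (type~$\1$ giving $k_w=1$, type~$\2$ giving boundedly many sub-configurations via Theorem~\ref{theoreme4}) and of the multiscale band structure matching Definition~\ref{config-standard}(ii)--(vi) is also essentially the paper's argument.

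However, there is a genuine gap in the mechanism you propose for item~(1) and for producing the initial $(k,\rho;\varsigma,\epsilon,M,C,h)$-configuration when $\hat m>0$. You propose to ``apply the initial HS approximation of \cite{HS1} iteratively $\hat m$ times,'' with the effective Planck constant decreasing at each step. This cannot work: the renormalization step of \cite{HS1} is a small-$h$ semiclassical result, and under the hypothesis $1\le a_n\le\hat M$ for $n\le\hat m$ the first $\hat m$ effective Planck constants $h_1(\alpha),\dots,h_{\hat m}(\alpha)$ are \emph{not} small (they are bounded below by $2\pi/(\hat M+1)$), so the \cite{HS1} step simply does not apply there. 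What the paper does instead (and what \cite{HS2} was written to do) is treat these first $\hat m$ steps in one shot: one regards the Harper operator with $h/2\pi$ close to the rational $p_{\hat m}(\alpha)/q_{\hat m}(\alpha)$ as a $q_{\hat m}\times q_{\hat m}$ matrix-valued $h$-pseudodifferential system $M_{p_{\hat m},q_{\hat m}}(x,hD_x)$ (with $h\sim|\alpha-p_{\hat m}/q_{\hat m}|$ small), whose spectrum splits into $q_{\hat m}(\alpha)$ bands $I_1,\dots,I_{q_{\hat m}(\alpha)}$, and in each band the Chambers determinant identity reduces the problem to a perturbation of a Harper model with the new (small) semiclassical parameter $\sim 1/a_{\hat m+1}$. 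This is exactly what makes $(I_{\emptyset_{\mathbf t}},\mathcal J_{\emptyset_{\mathbf t}})$ a $(k,\rho;\dots)$-configuration with $k=q_{\hat m}(\alpha)$ at the root, and it is not reproducible by iterating the \cite{HS1} step.

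A second, smaller point: you correctly intuit that the odd-parity assumption on $q_{\hat m}(\alpha)$ is what makes the decomposition into disjoint $I_i$'s clean, but the actual input is van Mouche's theorem \cite{VM} that touching bands of the rational Harper model occur only for $q_{\hat m}$ even and only at $\ell=q_{\hat m}/2$; the paper invokes this explicitly. Without it, one would have to treat the touching pair using the refined near-tangency estimates of \cite{HS2} (the last line of \eqref{0.8}), which would modify the construction of the first covering. Your phrasing (``pairing adjacent bands'') gestures at this but should be pinned to \cite{VM}.
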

\begin{proof}
We have just to verify that the statement is a direct consequence of the results given in the appendices. This link was already announced in Paragraph \ref{sec-struc-spec}.  Let us consider the step 1 corresponding to the construction of $\Omega_1$. When $\hat m =0$ the statement corresponding to the properties of $\Omega_1$ is given in Proposition \ref{propB.6}. For $a_1$ large the semi-classical parameter $h= 2\pi\alpha $ is close to $2\pi /a_1$. The Harper model we are starting from is a type {\bf 1f} operator and
 we indeed get a configuration as introduced in Definition \ref{config-standard}.  Theorem \ref{theorem3} then explains what we shall do in each of the intervals with the new semi-classical parameter $h'$ close to $2\pi/a_2$.
When $\hat m>0$, the first step is different since we do not start form a type 1 operator but from an $h$-pseudo-differential system. The analysis performed in \cite{HS2} (see also Appendices A and B) leads us to a $(k,\rho;\varsigma,\epsilon,M, C,h)$ configuration
 with $k= q_{\hat m}(\alpha)$  (see Figure \ref{fig-k-rho-config} for an illustration of the configuration  when $\alpha$ is close to the rational $1/3$ (with $\hat m=1$)).  According to the statements in the appendices, we are at step 2 in each interval with a configuration with a new semi-classical parameter $h'$. Note that $k$ also appears when using  Theorem \ref{theoreme4}  relative to type $2$ operators.
 We can then iterate at any step.
\end{proof}

\begin{rem}
     The assumption that $q_{\hat m}(\alpha)$ is odd is unessential but permits to assume that the bands corresponding to the spectrum of the Harper model  for $p_{\hat m}(\alpha)/q_{\hat m}(\alpha)$ do not touch according to the result
     of \cite{VM}. Without this assumption, we should use \cite{HS2} for the description of the spectrum near the touching point (see the last line in \eqref{0.8}) and will oblige us to change at the first step the definition
      of $\tilde {\mathcal B}_1$ (see Paragraph \ref{sec-struc-spec} for the definition of $\tilde \B_1$).
    \end{rem}

%
%
%
%
%


\subsection{Arbitrarily small  Hausdorff and upper box dimensions of the spectrum}

As an application of Theorem \ref{abstract-dim-bd} and  Lemma \ref{pre-dim-delta}, we have the following estimates.

\begin{prop}\label{basic-H-B}

Let $\hat m \in \Z_+$  and $\hat M \geq 2$. Then for any $\delta>0$, there exists a constant
$L:=L(\delta,\hat m, \hat M)>0$ such that   the following hold:

(1) If
$\alpha=[a_1,a_2,\cdots,a_{\hat m},a_{\hat m+1},\cdots]$ is  such that $q_{\hat m}(\alpha)$ is odd and
   \begin{equation}\label{alpha-condi-1}
  1\le a_n\le \hat M,\ (1\le n\le \hat m); \ \ L\le a_n, \ (n\ge \hat m+1),
  \end{equation}
   then $\dim_H\Sigma_\alpha\le \delta$.

   (2) Fix any $\gamma>1$. If
$\alpha=[a_1,a_2,\cdots,a_{\hat m},a_{\hat m+1},\cdots]$ is such that $q_{\hat m}(\alpha)$ is odd and
   $$
  1\le a_n\le \hat M,\ (1\le n\le \hat m); \ \ L\le a_n\le \gamma L, \ (n\ge \hat m+1),  $$
   then $\overline{\dim}_B\Sigma_\alpha\le \delta$.
\end{prop}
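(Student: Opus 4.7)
The strategy is to realize $\Sigma_\alpha$ as a finite union of limit sets of nested covering structures supplied by Theorem \ref{key-HS-new}, and then to bound each of their dimensions via Theorem \ref{abstract-dim-bd}. The real content lies in parameter bookkeeping: the hypothesis $a_n \ge L$ must force the smallness $h_{n+\hat m}(\alpha) \le h(\delta)$ required by Lemma \ref{pre-dim-delta}(3) and Theorem \ref{abstract-dim-bd}, with $L$ depending only on $(\delta, \hat m, \hat M)$ and not on $\alpha$.

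First, feed $(\hat m, \hat M)$ into Theorem \ref{key-HS-new} to extract $\epsilon_0 > 0$, $\varsigma \in (0,4)$, $\kappa \in \N$, then fix any $\epsilon \in (0, \epsilon_0]$ (say $\epsilon := \epsilon_0/2$) to obtain $h_0 > 0$, $M > C > 1$, $\rho \in (0,1)$, all depending only on $(\hat m, \hat M)$. Given $\delta \in (0,1)$, Lemma \ref{pre-dim-delta}(3) applied with these constants $\kappa, \rho, \varsigma, \epsilon, M, C$ yields a threshold $h(\delta) \in (0, \tilde h]$. The elementary continued-fraction bound
\begin{equation*}
  h_n(\alpha) = 2\pi[a_n, a_{n+1}, a_{n+2}, \cdots] \le \frac{2\pi}{a_n}
\end{equation*}
then motivates the choice
\begin{equation*}
  L := \left\lceil \frac{2\pi}{\min\{h_0,\, h(\delta)\}} \right\rceil + 1,
\end{equation*}
which depends only on $(\delta, \hat m, \hat M)$ and guarantees that $a_n \ge L$ implies $h_n(\alpha) \in (0, h(\delta)] \subset (0, h_0]$.

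Under the hypotheses of (1), the assumptions of Theorem \ref{key-HS-new} are now all in force, so we obtain
\begin{equation*}
  \Sigma_\alpha = \bigcup_{i=1}^{q_{\hat m}(\alpha)} X(\Omega_\ast^{(i)}),
\end{equation*}
where the level-$n$ pairs $(I_w, \mathcal J_w)$ of $\mathcal I(\Omega_\ast^{(i)})$ are $(k_w, \rho; \varsigma, \epsilon, M, C, h_{n+\hat m}(\alpha))$-configurations with $k_w \le \kappa$. Since $h_{n+\hat m}(\alpha) \le h(\delta)$ for every $n \ge 1$, Theorem \ref{abstract-dim-bd}(1) gives $\dim_H X(\Omega_\ast^{(i)}) \le \delta$ for each $i$; taking the finite union and using countable stability of Hausdorff dimension proves (1).

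For (2), the additional upper bound $a_n \le \gamma L$ supplies the matching lower estimate $h_n(\alpha) \ge 2\pi/(a_n + 1) \ge 2\pi/(\gamma L + 1) =: h'(\delta)$, so every $h_w = h_{n+\hat m}(\alpha)$ lies in the compact interval $[h'(\delta), h(\delta)]$. Theorem \ref{abstract-dim-bd}(2) then gives $\overline{\dim}_B X(\Omega_\ast^{(i)}) \le \delta$ for each $i$, and finite stability of the upper box dimension completes (2). The main obstacle, such as it is, is keeping the quantifier chain straight: the constants $\epsilon_0, \varsigma, \kappa, h_0, M, C, \rho$ must all be locked in before $h(\delta)$ is extracted, and $L$ must appear last so that the $\alpha$-dependent conditions ($q_{\hat m}(\alpha)$ odd and $a_n \le \hat M$ for $n \le \hat m$, which do not involve $L$) remain coherent with all earlier parameter choices.
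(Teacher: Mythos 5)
Your proposal is correct and follows essentially the same path as the paper: decompose $\Sigma_\alpha$ via Theorem \ref{key-HS-new}, translate $a_n \ge L$ into $h_{n+\hat m}(\alpha)\le h(\delta)$ using $h_n(\alpha)\le 2\pi/a_n$, and invoke Theorem \ref{abstract-dim-bd}(1) and (2) for the Hausdorff and upper box dimensions respectively, with the two-sided bound $h_n(\alpha)\ge 2\pi/(\gamma L+1)$ supplying $h'(\delta)$ in part (2). One cosmetic point: the paper chooses $\epsilon\in(0,\frac{\varsigma}{100}\wedge\epsilon_0]$ rather than simply $\epsilon_0/2$, since the configuration parameter constraint \eqref{parameter} also requires $\epsilon<\varsigma/100$; your choice should be adjusted to $\epsilon:=\min\{\epsilon_0,\varsigma/100\}/2$ (or one should note that $\epsilon_0$ is already implicitly below $\varsigma/100$ by the conclusion of Theorem \ref{key-HS-new}).
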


\begin{proof}
(1)
  Let $\epsilon_0,\varsigma,\kappa$ be the constants in Theorem \ref{key-HS-new}.
Fix
$$\epsilon\in (0, \frac{\varsigma}{100}\wedge \epsilon_0].$$ Let $h_0>0,M> C>1,\rho\in (0,1)$ be the constants determined by Theorem \ref{key-HS-new}.

Now fix any $\delta>0$. Choose $h(\delta,\hat m,\hat M)\in (0,h_0)$ such that  Lemma \ref{pre-dim-delta} holds. Choose
$$L(\delta,\hat m,\hat M):=2\pi/h(\delta,\hat m,\hat M).$$
Now fix any $\alpha=[a_1,a_2,\cdots]$ such that \eqref{alpha-condi-1} holds.
Then for any $n\ge \hat m+1$, we have
$$
h_n(\alpha)= \frac{2\pi}{a_n+G_{n}(\alpha)}\le \frac{2\pi}{a_n}\le h(\delta,\hat m,\hat M)\le h_0.
$$
By Theorem \ref{key-HS-new}, there exists $\{I_i: 1\le i\le q_{\hat m}(\alpha)\}$ and  for any $1\le i\le q_{\hat m}(\alpha),$ there exist $\Omega_\ast^{(i)}$ and $\mathcal I(\Omega_\ast^{(i)})$ such that Theorem \ref{key-HS-new}  (2-1) and (2-2) hold. Then by Theorem \ref{abstract-dim-bd}(1), we have
$$
\dim_H \Sigma_\alpha=\dim_H \bigcup_{1\le i\le q_{\hat m}(\alpha)}\Sigma_\alpha^{(i)}=\max_{1\le i\le q_{\hat m}(\alpha)}\dim_H\Sigma_\alpha^{(i)}=\max_{1\le i\le q_{\hat m}(\alpha)}\dim_H X(\Omega_\ast^{(i)})\le \delta.
$$

(2) By the assumption, for any $n\ge \hat m+1$ we also have
$$
h_n(\alpha)= \frac{2\pi}{a_n+G_{n}(\alpha)}\ge \frac{2\pi}{a_n+1}\ge \frac{\pi}{a_n}\ge \frac{\pi}{\gamma L(\delta,\hat m,\hat M)}= \frac{h(\delta,\hat m,\hat M)}{2\gamma}.
$$
Then by Theorem \ref{abstract-dim-bd}(2), we have
$$
\overline{\dim}_B \Sigma_\alpha=\overline{\dim}_B \bigcup_{1\le i\le q_{\hat m}(\alpha)}\Sigma_\alpha^{(i)}=\max_{1\le i\le q_{\hat m}(\alpha)}\overline{\dim}_B\Sigma_\alpha^{(i)}=\max_{1\le i\le q_{\hat m}(\alpha)}\overline{\dim}_B X(\Omega_\ast^{(i)})\le \delta.
$$
\end{proof}
\subsection{Proofs of Theorem \ref{main-3} and Theorem \ref{main-2}}\

  Given Proposition \ref{basic-H-B}, the proofs are essentially the same as that of \cite{HLQZ} Theorem 1.1. The only extra issue is  the parity of $q_{\hat m}(\alpha)$.
To deal with this, we need  the following well-known fact.  We also include a very short proof.

\begin{lem}\label{odd}
  Given $\alpha\in [0,1]\setminus \Q$, assume $p_n(\alpha)/q_n(\alpha)$ is the $n$-th convergent of $\alpha$. Then for any $n\in \N$, at least one of $q_n(\alpha), q_{n+1}(\alpha)$ is odd.
\end{lem}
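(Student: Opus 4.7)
The plan is to use the standard recurrence for the convergent denominators, namely $q_{k+1} = a_{k+1} q_k + q_{k-1}$ with initial values $q_{-1} = 0$ and $q_0 = 1$, and argue by contradiction reducing modulo $2$.

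Concretely, I would suppose for contradiction that both $q_n(\alpha)$ and $q_{n+1}(\alpha)$ are even. From the recurrence
\begin{equation*}
q_{n-1}(\alpha) = q_{n+1}(\alpha) - a_{n+1} q_n(\alpha),
\end{equation*}
the parity of $q_{n-1}(\alpha)$ is forced to be even as well. Iterating the same identity downward (or equivalently working modulo $2$, where the recurrence becomes $q_{k+1} \equiv a_{k+1} q_k + q_{k-1} \pmod{2}$), one concludes inductively that $q_k(\alpha)$ is even for every $-1 \le k \le n+1$. This contradicts $q_0(\alpha) = 1$.

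I expect no real obstacle here; the only care needed is to fix the convention for $q_0$ and $q_{-1}$ consistently with the definition used in Theorem \ref{key-HS-new}, and to note that the argument is purely arithmetic and does not depend on the $a_i$ being bounded or on $\alpha$ being irrational. The proof is two or three lines once the recurrence is written down.
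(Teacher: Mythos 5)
Your proof is correct and uses essentially the same argument as the paper: the recurrence $q_{k+1}=a_{k+1}q_k+q_{k-1}$ with $q_{-1}=0$, $q_0=1$, a descent showing that two consecutive even denominators force all earlier ones to be even, and the contradiction with $q_0=1$. No differences worth noting.
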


\begin{proof}
  Note that $q_n(\alpha)$ can be defined recursively by
  $$
  q_{-1}(\alpha)=0, \ q_0(\alpha)=1, \ q_{n+1}(\alpha)=a_{n+1}q_{n}(\alpha)+q_{n-1}(\alpha),\  (n\ge0).
  $$
  If both $q_n(\alpha)$ and $q_{n+1}(\alpha)$ are even for some $n\ge1$, then
  $$
  q_{n-1}(\alpha)=q_{n+1}(\alpha)-a_{n+1}q_{n}(\alpha)
  $$
  is also even. By induction, we get $q_0(\alpha)$ is even, a contradiction.
\end{proof}

\noindent {\bf Proof of Theorem \ref{main-3}.}
By Remark \ref{Z-inv}, we only need to show that $\mathscr F_B(\delta)\cap [0,1]$  is dense in $[0,1]$ and has positive Hausdorff dimension.

At first we show that $\mathscr F_B(\delta)\cap [0,1]$ has positive Hausdorff dimension.  Take $L=L(\delta, 2,2)$ as in  Proposition \ref{basic-H-B}. Define
$$
\mathscr F:=\{[1,2,a_3,a_4,\cdots]: L\le a_n\le 10 L, n\ge3\}.
$$
Then for any $\alpha\in \mathscr F$, we have $q_2(\alpha)=3.$ By Proposition \ref{basic-H-B},
we have $\overline \dim_B \Sigma_\alpha\le \delta.$  So $\mathscr  F\subset \mathscr F_B(\delta)\cap [0,1]$. By \cite{Go} Theorem 11, $\dim_H G^2(\mathscr  F)>0.$ By a simple computation, we know that $G^2: \mathscr F\to G^2(\mathscr F)$ is a bi-Lipschitz map. So $\dim_H \mathscr  F=\dim_H G^2(\mathscr  F)>0$.
Hence $\dim_H\mathscr F_B(\delta)\cap [0,1]>0.$

Next we show that $\mathscr F_B(\delta)\cap [0,1]$ is dense in $[0,1]$. Fix any $N\ge 2$ and take
$$
\hat L:=L(\delta, N,N)\vee L(\delta, N+1,N).
$$
Define $\mathscr F_N:=\mathscr F_N^o\cup \mathscr F_N^e$, where
\begin{align*}
  \mathscr F_N^o&:=\{\alpha\in \R\setminus \Q: a_n\le N,\ ( n\le N);  q_N(\alpha) \text{ is odd}; a_n=\hat L, \ (n\ge N+1)\}\\
  \mathscr F_N^e&:=\{\alpha\in \R\setminus \Q: a_n\le N,\ ( n\le N);  q_N(\alpha) \text{ is even}; a_{N+1}=1, a_n=\hat L, \ (n\ge N+2)\}.
\end{align*}
By Proposition \ref{basic-H-B}, $\mathscr F_N^o\subset \mathscr F_B(\delta)\cap [0,1]$. If $\alpha\in \mathscr F_N^e$, then by Lemma \ref{odd}, $q_{N+1}(\alpha)$ is odd. Then By Proposition \ref{basic-H-B} again, $\mathscr F_N^e\subset \mathscr F_B(\delta)\cap [0,1]$. So $\mathscr F_N\subset \mathscr F_B(\delta)\cap [0,1]$. Now define
$$
\widehat{\mathscr F}:=\bigcup_{N\ge2}\mathscr F_N
$$
Then $\widehat{\mathscr F}\subset \mathscr F_B(\delta)\cap [0,1]$ and by the same argument as \cite{HLQZ} Sec. 4.4, $\widehat{\mathscr F}$ is dense in $[0,1]$. So $\mathscr F_B(\delta)\cap [0,1]$ is also dense in $[0,1]$.
\hfill $\Box$

\noindent {\bf Proof of Theorem \ref{main-2}.}\ Let $\mathcal F$ and $\widehat{\mathcal F}$  be  defined as above.  Then $\widetilde F:={\mathcal F}\cup \widehat{\mathcal F}$ is dense in $[0,1]$ and of positive Hausdorff dimension.  For any $\alpha\in \widetilde{\mathcal F}$,  we have shown that $\overline{\dim}_B\Sigma_\alpha\le \delta<1$. Consequently,   $\beta(\alpha)=0$ by \cite{JZ}. By \cite{HLQZ} Theorem 1.3, we have $\dim_H \Sigma_\alpha>0.$
\hfill $\Box$

\bigskip

\noindent{\bf Acknowledgements}.
Y.-H. Qu was supported by the National Natural Science Foundation of China, No. 11790273, No. 11871098 and No. 12371090.  Q. Zhou was partially supported by National Key R\&D Program of China (2020YFA0713300) and Nankai Zhide Foundation.

 \appendix
 \section{A short reminder on the results  in \cite{HS2}}\label{sec-ap-a}
 The following theorem was proved in \cite{HS2} and sufficient for the results given in \cite{HLQZ}.

\begin{thm}
 Let $\hat m \in \Z_+$  and $M \geq 2$. There exists $\eta_0  >0$ and, for $\eta_1 \in (0,\eta_0  )$,   constants $C=C(\hat m, M,\eta_1) >0$  and $C'=C'(\hat m, M,\eta_1) >0$ such that if
 $$\alpha=[a_1,a_2,\dots] $$
  is irrational and satisfies  for some $m\leq \hat m$
 \begin{equation}\label{0.7}
 \begin{array}{ll}
 1 \leq  a_j \leq M& \mbox{ for } 0 < j \leq m\\
 a_j \geq C & \mbox{ for } j\geq m+1\,,
 \end{array}
 \end{equation}
 then $\Sigma_{\alpha}$ is contained in the union of $q_m(\alpha)$ intervals $I_\ell (h)$ ($\ell=1,\cdots, q_m(\alpha))$ in the form
 $[\gamma_\ell (h), \delta_\ell (h)]$ with
 \begin{equation}\label{0.8}
 \begin{array}{l}
   \gamma_\ell(h)\,,\, \delta_\ell (h) \in \Sigma_{\alpha}\,,\\
 \gamma_\ell < \delta_\ell \leq \gamma_{\ell +1} < \delta_{\ell +1}\,,\\
 \gamma_\ell(h) \geq \gamma_\ell  - C' h\, \mbox{ and }   \delta_\ell (h) < \delta_\ell + C'  h\,, \\
\gamma_{\ell}(h) \geq \gamma_{\ell} + \frac{1}{C'} \sqrt{h} \mbox{ if } \delta_{\ell-1} =\gamma_{\ell}\,,
 \end{array}
 \end{equation}
 where
 \begin{equation}
  \alpha^{(m)} =[a_1,\dots,a_m]= \frac{p_m(\alpha)}{q_m(\alpha)}\,,
  \end{equation}
 \begin{equation}
 h = 2\pi |\alpha - \alpha^{(m)}| \,,
 \end{equation}
 \begin{equation}\label{0.11}
 \cup_\ell [\gamma_\ell,\delta_\ell]  =\Sigma_{\alpha^{(m)}} \,,
 \end{equation}
 \begin{equation}\label{0.12}
 d(I_\ell (h), I_{\ell+1}(h))\geq \frac 1{C' }\mbox{ if } \delta_\ell \neq \gamma_{\ell +1}
 \mbox{ and } \geq \frac 1 {C'} \sqrt{h} \mbox{ if } \delta_\ell =\gamma_{\ell +1}\,.
 \end{equation}
 For each interval $I_\ell(h)$, $\Sigma_{\alpha}\cap I_\ell (h)$ can be described as living in a union of closed intervals $J_j^{(\ell)}$ of length $\neq 0$ with
 $\partial J_{j}^{(\ell)} \subset \Sigma_{\alpha}\,$, $J_{j+1}^{(\ell)}$ on the right of $J_j^{(\ell)}$ and
 \begin{equation}\label{0.13}
\frac{1}{C'}  \frac{1}{a_{m+1}} \leq d(J_j^{(\ell)}, J_{j+1}^{(\ell)} )  \leq C'  \frac{1}{\sqrt{a_{m+1}}}\,,
 \end{equation}
 \begin{equation}
| |J_0^{(\ell)}| - 2 \eta_1|  \leq  \frac{C'}{|a_{m+1}|}\,.
 \end{equation}
 The other bands have size
 \begin{equation}\label{0.16}
 \exp \left( - C(j) |a_{m+1}| \right) \mbox{ with } \frac 1 {C'} \leq C(j) \leq C'\,.
 \end{equation}
 For $j\neq 0$, if $\kappa_j^{(\ell)} $ is the affine function sending $J_j^{(\ell)}$ into  $[-2,+2]$, then
 $$
 \kappa_j^{(\ell)}(J_{j}^{(\ell)}) \cap \Sigma_{\alpha} \subset \cup_k J_{j,k}^{(\ell)}\,,
 $$
 where the $J_{j,k}^{(\ell)}$ have analogous properties to the $J_j^{(\ell)} $ with $a_{m+1}$ replaced by $a_{m+2}$ and \eqref{0.13} can be improved in the form
 \begin{equation}
\frac{1}{C'}   \frac{1}{a_{m+2}} \leq d(J_{j,k}^{(\ell)}, J_{j,k+1}^{(\ell)}) \leq C' \frac{1}{a_{m+2}}\,.
 \end{equation}
 One can then iterate indefinitely.
   \end{thm}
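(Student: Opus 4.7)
The plan is to follow the semiclassical strategy developed by Helffer–Sj\"ostrand, treating the Harper operator at frequency $\alpha$ as a perturbation of its rational approximant at $\alpha^{(m)}=p_m(\alpha)/q_m(\alpha)$, with small semiclassical parameter $h=2\pi|\alpha-\alpha^{(m)}|$. The first step is to realize the Harper operator as an $h$-pseudodifferential operator via the Wannier transform associated to magnetic translations; when $\alpha$ is replaced by $\alpha^{(m)}$, this operator admits the band decomposition $\cup_\ell[\gamma_\ell,\delta_\ell]=\Sigma_{\alpha^{(m)}}$ in \eqref{0.11}, and (assuming the odd-denominator genericity used in \cite{VM}) each band is separated from its neighbors by a positive gap bounded below in terms of $(\hat m, M)$ only.

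Next, for the true frequency $\alpha$, I would apply a Grushin (Feshbach-type) reduction localized in a neighborhood of each rational band $[\gamma_\ell,\delta_\ell]$. Projecting onto the spectral subspace of the $\alpha^{(m)}$-operator attached to this band produces an effective $h$-pseudodifferential system whose principal symbol is, to leading order, itself a Harper-type function on a two-torus, but now with a renormalized semiclassical parameter that is essentially $1/a_{m+1}$. This is the crucial renormalization mechanism: spectral analysis of the effective operator reintroduces a band structure at the new scale, and a WKB / harmonic-oscillator analysis at its turning points yields the gap estimate \eqref{0.13} ($\sim 1/a_{m+1}$) and the tunneling estimate \eqref{0.16} ($\sim\exp(-C(j)a_{m+1})$) for bands $J_j^{(\ell)}$ with $j\ne 0$. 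The hypothesis $a_{m+1}\ge C$ ensures that the renormalized semiclassical parameter is small enough to validate the asymptotic expansions.

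The interval estimates \eqref{0.8} and \eqref{0.12} then follow from the fact that the Grushin reduction is exact modulo $O(h)$ in operator norm, so the true spectrum moves by at most $O(h)$ relative to $\Sigma_{\alpha^{(m)}}$, which is far less than the rational band gap in the non-touching case. Near the touching points $\delta_\ell=\gamma_{\ell+1}$ (unavoidable in the iterated step even if ruled out at the first step by the parity convention on $q_{\hat m}(\alpha)$), the effective symbol exhibits a conical / Dirac-type degeneracy, and a normal-form analysis there produces the characteristic $\sqrt h$ spreading recorded in the last lines of \eqref{0.8} and \eqref{0.12}. Because each non-trivial band $J_j^{(\ell)}$, once affinely rescaled by $\kappa_j^{(\ell)}$ to $[-2,2]$, is described by a new Harper-type operator whose semiclassical parameter is $h'=2\pi|G(\alpha^{(m)})-p/q|$ at the next continued-fraction level, the whole construction is self-similar and can be iterated indefinitely; the central band $J_0^{(\ell)}$ is not yet described at this stage and will need the sharper treatment from \cite{HS3} recalled in Appendix \ref{appc}.

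The main obstacle, as is typical in this circle of ideas, will be the Grushin reduction near band touchings and the verification that the reduced operator is again of Harper type up to errors that can be iterated. This demands both precise Weyl calculus for $h$-pseudodifferential operators on the torus and an explicit computation of the effective symbol, which rests on the algebraic identities for magnetic translations that form the technical heart of \cite{HS1,HS2}. Once these are in place, propagating the fine metric control \eqref{0.13}–\eqref{0.16} through the iteration becomes a careful book-keeping argument, at the price that the constants $C,C'$ are allowed to depend on the previously fixed parameters $(\hat m,M,\eta_1)$.
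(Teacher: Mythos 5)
This theorem is stated in Appendix~\ref{sec-ap-a} of the paper \emph{without proof}: the appendix is explicitly titled ``A short reminder on the results in \cite{HS2}'' and the result is attributed directly to the Helffer--Sj\"ostrand memoir, so there is no in-paper argument to compare against. What you have written is a fair high-level reconstruction of the semiclassical renormalization program of \cite{HS1,HS2}: pseudodifferential realization of Harper, Grushin/Feshbach reduction to an effective $h'$-pseudodifferential operator with $\frac{h'}{2\pi}=[a_{m+1},a_{m+2},\dots]$, WKB/tunneling estimates producing the gap scale $1/a_{m+1}$ and the exponential band widths $\exp(-C(j)a_{m+1})$, and a normal-form analysis near the touching bands producing the $\sqrt h$ scale. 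You also correctly flag that the central band $J_0^{(\ell)}$ is the ``black box'' that [HS3] (Appendices~\ref{sec-ap-b} and~\ref{appc}) is needed for.

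Two cautions. First, a few technical statements are imprecise as written. The Grushin reduction is \emph{exact}: $\mu\in\mathrm{Sp}(P)$ iff $0\in\mathrm{Sp}(E_{-+}(\mu))$, and the $O(h)$ localization of bands around $\Sigma_{\alpha^{(m)}}$ in \eqref{0.8} is not a consequence of some operator-norm error in the reduction but of the fact that the effective symbol is a small perturbation of the renormalized Harper symbol. Likewise, it is not ``$G(\alpha^{(m)})$'' (which is undefined since $\alpha^{(m)}$ is rational) that appears at the next step, but the Gauss-map iterate of $\alpha$ itself, via $\frac{2\pi}{h}\equiv\frac{h'}{2\pi}\ (\mathrm{mod}\ \Z)$; this is the key arithmetic identity driving the iteration. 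Second, and more fundamentally, what you have is a \emph{strategy outline}, not a proof: each mechanism you name (the microlocal reduction near a band, the tunneling-matrix computation, the normal form at the conical crossings, the propagation of the symmetry structure making the reduced operator again of Harper/type-$\mathbf{1f}$) is the object of long technical developments in \cite{HS1,HS2}, and your sketch does not and realistically cannot supply those steps. For the purposes of this paper that is acceptable, since the theorem is simply being quoted; but one should not mistake the outline for a self-contained argument.
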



   \begin{rem} $\eta_1$ corresponds  with the exclusion in each interval and at each step of the renormalization of a small interval of size $\approx 2\eta_1$
    for which another analysis has to be done and which was the object of \cite{HS3}. This corresponds  to the energy $0$ for the map $(x,\xi) \mapsto 2( \cos x + \cos \xi$). This refined analysis was not need in \cite{HLQZ}.
    \end{rem}

    \begin{rem}
    The possibility of having $\delta_\ell =\gamma_{\ell +1}$ is due to the occurrence of touching bands. van Mouche \cite{VM}  has proven that it occurs only
     when $q_m$ is even and  for $\ell = \frac{ q_m} {2}$.
     These two touching bands lead to the lower bound \eqref{0.12} and  the weaker estimate in \eqref{0.13}.
     \end{rem}

     In the next appendix, we will give a more precise information in the excluded intervals appearing in the above theorem.
     \section{Extracts of Harper III}\label{sec-ap-b}

We mainly follow  the  talk of presentation \cite{HS3talk}, which was announcing \cite{HS3}. Note that, concerning the first step, the paper of Helffer--Kerdelhue \cite{HK}
can be useful for more explicit computations.

 {\it We warn that  in this section, ${\alpha}\in \Z^2$  denotes an integer vector to match the notation in \cite{HS3}, while the frequency is related to $h/2\pi$.}

  \subsection{Introduction}
  As in \cite{HS1} we are considering the Harper model
  \begin{equation}\label{eqw:(1)}
  P_0= \cos hD_x + \cos x \mbox{ in } L^2(\mathbb R), \mbox{ with } D_x=\frac 1i \frac{\partial}{\partial x}\,,
  \end{equation}
  together with perturbations of this operator under the condition that
  \begin{equation}\label{eq:(2)}
  \frac{h}{2\pi} = \frac{1}{a_1 + \frac{1}{a_2 + \frac{1}{a_3 + \ldots}}}=[a_1,a_2,a_3,\cdots]\,,
   \end{equation}
   where $a_j\in \mathbb N$, $a_j\geq C_0$. Here $C_0$ is a sufficiently large constant.  In \cite{HS1}, as recalled in Appendix A,  it has been  obtained  a partial description of the spectrum of $P_0$ by an infinite procedure of localizations in subintervals associated with affine dilations.
   Notice that
   $$
   P_0= \frac 12 (\tau_h + \tau_{-h}) + \cos x\,,
   $$
   where $\tau_h$ is the translation operator $(\tau_h u )(x)= u(x-h)\,,$ and that it can also be understood as an $h$-pseudo-differential operator of symbol $\cos \xi + \cos x$.  See the footnote 4  for the definition.

   Unfortunately, each time that an isolated part of the spectrum was localized in a fixed interval, the methods which were developed in \cite{HS1} do not permit to continue the procedure in a small neighborhood of the middle of this interval. In \cite{HS2} the results were generalized to the case when $\frac{h}{2\pi}$ is ``close" to a rational, but do not solve the previous problem. Hence, the goal of \cite{HS3talk,HS3} was to complete this description.
   If we associate with $P_0$ the symbol
   $$
   p_0(x,\xi) =\cos \xi + \cos x\,,$$
   then, for $0 < \mu \leq 2$, the surface of real energy $\mu$, i.e. the set   $ \{(x,\xi)\in \mathbb R^2| p_0(x,\xi)=\mu\}$ is decomposed as $\cup_{\alpha \in \mathbb Z^2} U_{\alpha}(\mu)$, where $U_\alpha(\mu)$ is a closed curve surrounding $2\pi \alpha$ for $0 < \mu < 2$ and which becomes $\{2\pi \alpha\}$ for $\mu=2$. For $-2 \leq \mu < 0$, we have the same type of decomposition (by just replacing $2\pi \alpha$ by $2\pi \alpha + (\pi,\pi)$).

   Following ideas by Wilkinson,  \cite{HS1} gives a localization of  the spectrum of $P_0$ in $[-2,2] \setminus [-\epsilon_0,\epsilon_0]$ (for small positive $\epsilon_0$) in a union of closed intervals of size $e^{-  1/(C h)}$, which are separated  by open intervals of size $\sim h$.
   Each interval corresponds modulo $\mathcal O(e^{-1/C h})$ to an eigenvalue of a reference problem obtained by ``filling" all the ``wells" $U_\alpha$ except one.
   Analyzing then the tunneling effect between the wells it can be proved that the study of the spectrum in each of these intervals can be reduced  to
    the analysis of the spectrum of an infinite matrix $W=(w_{\alpha,\beta})_{(\alpha,\beta)\in \mathbb Z^2\times \Z^2}$, for which the principal contribution can be  computed.

    Using the symmetry properties of $w_{\alpha,\beta}$,  $W$ is isospectral (up to a multiplicative positive constant) to the $h'$-Weyl quantification of some symbol $p(x,\xi)$ which is closed to $p_0$. Here $h'\in (0,2\pi]$ is given by
    \begin{equation}\label{defh'}
    \frac{2\pi}{h}\equiv \frac{h'}{2\pi} \mbox{ mod } \mathbb Z\,.
    \end{equation}
    This implies that
    $$
     \frac{h'}{2\pi} =[a_2,a_3,\cdots]\,.
    $$
    For the spectrum near $0$ we do not have a natural localization in wells. Actually, when $\mu$ is closed to $0$, the energy surface $p_0=\mu$ is close to the union of the lines
    $$
    \xi =\pm  x + (2k+1) \pi\,,\, k\in \mathbb Z\,,
    $$
    and the difficulty is that $p_0$ has critical points at $(k\pi,\ell \pi)$ with $ k + \ell -1\in 2 \mathbb Z$.
    One is  led to analyze the microlocal solution\footnote{ We refer to the appendices in \cite{HS3} for precise definitions. This involves
     the notion of Analytic Frequency Set and to work modulo exponentially small terms. We also refer to the more recent lectures of M. Hitrick and J. Sj\"ostrand \cite{HiSj} for an introduction of the basic definitions.} of $(P_0-\mu) u=0$ near these points.

  Let $s(0,1)$ be the segment $[(0,\pi),(\pi,0)]$,  and $s(0,j)=\mathcal H^{1-j}(s(0,1))$, where
  $$
  \mathcal H (x,\xi)=(\xi, -x)\,,$$
  and
   $s(\alpha,j) = s(0,j) + \{ 2\pi \alpha \}$\,.
   Near the middle of $s(0,1)$, the microlocal solution of $(P_0-\mu)u=0$ is unique up to a multiplicative constant. Let us denote by $u_{0,1}$ such a solution. Using the symmetries of $P_0$, we can define $u_{\alpha,j}$ a microlocal solution near $s(\alpha,j)$. Near a branching point (say for example $(0,\pi)$) the microlocal kernel of $(P_0-\mu)$ has dimension $2$ and an element of the kernel can be written as
   $$
   \begin{array}{lll}
   x_1 u_{0,1} & \mbox{ near the interior of} & s(0,1) \\
    x_3 u_{(0,1),3} & \mbox{ near the interior of} & s((0,1),3)\\
     y_2 u_{0,2} & \mbox{ near the interior of} & s(0,2)\\
      y_4 u_{(0,1),4} & \mbox{ near the interior of} & s((0,1),4)\,.
   \end{array}
   $$
 Here the parameters are  $(x_1,x_3)$ and $(y_2,y_4)$ is then determined by
   $$
 \left(  \begin{array}{c} y_2\\y_4\end{array}\right)=U \, \left(  \begin{array}{c} x_1 \\ x_3\end{array}\right)\,,
 $$
 where $U$ is a $\mu$-dependent unitary matrix whose behavior can be analyzed asymptotically.
 Now we choose a suitable family $\{f_{\alpha,j}\}$ of $L^2$ functions, such that $f_{\alpha,j}$ is microlocalized in the middle of $s(\alpha,j)$ and such that $\langle u_{\alpha,j}|f_{\alpha,j}\rangle=1$.
 Then one introduces what is called in other contexts a Grushin problem.  By introduction of  $$
 \begin{array}{rll}
 (R_+u)(\alpha,j)&:= \langle u| f_{\alpha,j}\rangle & \mbox{ for }\alpha \in \mathbb Z^2\,,\, j=1,3\,,\, u\in L^2\\
 R_-u^-&:= \sum_{\alpha \in \mathbb Z^2, j=2,4} u^-(\alpha,j) f_{\alpha,j} &\mbox{ for }  u^-\in \ell^2(\mathbb C_p)\,.
 \end{array}
 $$
 One  obtains a bijective operator
 $$
 \mathcal P_0 =\left(\begin{array}{cc} P_0-\mu&R_-\\R_+&0\end{array}\right)
 $$
 from $L^2(\mathbb R)\times \ell^2(\mathbb Z^2;\mathbb C^2_p)$ onto $L^2(\mathbb R)\times \ell^2(\mathbb Z^2;\mathbb C^2_i)$\,, where $\mathbb C^2_i$ and $\mathbb C^2_p$ are copies of $\mathbb C^2$ indexed respectively by $1,3$ and $2,4$.

 If the inverse is denoted by $\left(\begin{array}{cc} E&E_+\\E_-&E_{-+}\end{array}\right)$, then $\mu \in Sp(P_0)$ if and only if $0\in Sp(E_{-+}(\mu))$. As with $W$, one can see $E_{-+}$ as an infinite matrix $(E_{-+}(\alpha,\beta))$ where $E_{-+}(\alpha,\beta)$ is now a $2\times 2$ matrix.
 It can be  shown that $0\in Sp(E_{-+})$ if and only if $0\in Sp (P)$, where $P$ is the $h'$-quantification of a matrix-valued symbol.

 Fortunately, the analysis of these $2\times 2$ systems is rather similar to the analysis of perturbed Harper's models, and rather often, one  can indeed  reduce the analysis to the scalar case.
Nevertheless note that  the linear dependence of the spectral parameter is lost and that $P$ is no more selfadjoint.  But it can be found $h'$-pseudodifferential operators $P_1,P_2$, which are elliptic near the characteristic set of $P$, such that $P_1^*P$ and $P P_2^*$ are selfadjoint.

  \subsection{Main results}
For $h >0$ we introduce the following operators initially defined on $\mathcal S(\mathbb R)$:
\begin{enumerate}
\item
$ \mathcal F_h u (\xi ) := \frac{1}{2\pi h} \int e^{-i\xi/h} u(x)\, dx$.
\item$ \tau_1u(x)= u(x-2\pi)$, $\tau_2 u (x)= e^{2i\pi / h} u(x)$\,.
\item $T_\alpha =\tau_1^{\alpha_1} \tau_2^{\alpha_2}$ for $\alpha\in \mathbb Z^2$.
\end{enumerate}
We denote by $V$ the antilinear quantification  of the antisymplectic reflexion $(x,\xi) \mapsto \mathfrak S (x,\xi)=  (\xi,x)$. Note that.  $V^2=I$
 and that
 $$
 V= U_{\frac \pi 4} \Gamma U_{\-\frac \pi 4}\,,
 $$
 where
 $ U_t = e^{it ((hD)^2+x^2-h)/h}$  and $\Gamma$ is the complex conjugation.

In the iteration (``renormalization") procedure we will meet two types of operators.
\begin{defi} We say that the triple $(P,P_1,P_2)$ of $h$-pseudodifferential operators is of type $1$ if $P_1^*P$ and $P P_2^*$ are selfadjoint and
\begin{enumerate} \item
$[P,T_\alpha]=0\,,\, [P_j, T_\alpha]=0\,,\, \forall \alpha \in \mathbb Z^2\,,$
\item $[P,V]=0\,,\, [P_j,V]=0\,,$
\item $P \mathcal F =\mathcal F P^*$\,,\,,$P_1 \mathcal F =\mathcal F P_2^*$, $P_2 \mathcal F =\mathcal F P_1^*$\,.
\end{enumerate}
\end{defi}
This is an extension of the notion of invariant self-adjoint operators which corresponds to the case $P_1=I$ and $P_2=I$.
\begin{defi}\label{defB2}
We say that the triple $(P,P_1,P_2)$ of  $h$-pseudodifferential operators  depending on a complex parameter $\mu$ with $|\mu| < 4$ is of type\footnote{This is called ``strong type $1$" in \cite{HS3}.}  $\bf 1f$  if  it is of type $1$ for $\mu$-real and if there exists $\epsilon >0$ such that  the associated Weyl's symbols\footnote{Here we recall that if $p(x,\xi)$ is a symbol, then the associated $h$-pseudodifferential operator is defined (and denoted by $p^w(x,hD_x)$) on $\mathcal S(\mathbb R)$ by
$$  p^w(x,hD_x) u (x)= (2\pi h)^{-1} \int e^{i\frac{(x-y)\xi}{h}} p(\frac{x+y}{2},\xi) u(y) dy d\xi\,.$$}
 $p(\mu,x,\xi)$, $p_j(\mu,x,\xi)$
 are holomorphic in $\mu,x,\xi$ for $|\mu| <4$, $|\Im (x,\xi)| < \frac 1 \epsilon$ and satisfy there
 $$
 |p(x,\xi,\mu) - (\cos \xi + \cos x -\mu)|\leq \epsilon\,,\, |p_j(x,\xi,\mu)-1| \leq \epsilon\,.$$
 \end{defi}
 Note that  the Harper model (whose symbol is $\cos \xi + \cos x$) which is the first operator in the iteration procedure, is of this type.

 In this context we can define $\epsilon (P)=\epsilon (P,P_1,P_2)$ as the infimum over the $\epsilon >0$ such that the above holds.
 \begin{defi}
 We say that the triple $(P,P_1,P_2)$ of $h$-pseudodifferential operators $2\times 2$ matrices valued $L^2(\mathbb R;\mathbb C_i^2)\mapsto L^2(\mathbb R;\mathbb C_p^2)$ is of type $2$, if $P_1^* P$ and $P P_2^*$ are selfadjoint and
 \begin{enumerate} \item
$[P,T_\alpha]=0\,,\, [P_j, T_\alpha]=0\,,\, \forall \alpha \in \mathbb Z^2\,,$
\item $VP= PV T^2 $\,,\, $VP_j= P_jVT^2\,,$
\item $P \mathcal F T=\mathcal F T P^*$\,,\,$P_1 \mathcal F T=\mathcal F T P_2^*$, $P_2 \mathcal FT =\mathcal F T P_1^*$\,.
\end{enumerate}
\end{defi}

\begin{defi}\label{defB4}
 We say that the triple $(P,P_1,P_2)$ of $h$-pseudodifferential operators $2\times 2$ matrices valued and depending on a complex parameter $\mu$ with $|\mu| < 4$ is of type $\bf 2f$ if  it is of type $2$ for $\mu$-real and if there exists $\epsilon >0$ such that  the associated Weyl symbols $p(\mu,x,\xi)$, $p_j(\mu,x,\xi)$
 are holomorphic in $\mu,x,\xi$ for $|\mu| <4$, $|\Im (x,\xi)| < \frac 1 \epsilon$ and satisfy there
$$
||P(x,\xi,\mu) - P_0(a,b,x,\xi)|| \leq \epsilon\,,\, || P_j (x,\xi,\mu) - P_0(ia,ib,x,\xi)|| \leq \epsilon\,.
$$
Here
$$
P_0(a,b,x,\xi)=\left(\begin{array}{cc} b+ \bar a e^{i\xi}& \bar b + a e^{ix}\\ \bar b + a e^{-ix} & b + \bar a e^{i\xi}\end{array}
 \right)
$$
where $a,b\neq 0$ depend holomorphically on $\mu$,
$$
\bar a:= \overline{a(\bar \mu)}\,,\, \bar b:= \overline{b(\bar \mu)}\,,\, b(\mu)=b(0) (1+\beta_1(\mu)) e^{i\mu (1+\beta_2(\mu))}
$$
where the $\beta_j$ are holomorphic, $|\beta_j|\leq \epsilon$, $\beta_j$ is real for $\mu$ real.

Moreover, for $\mu \in \mathbb R$
\begin{equation}\label{eq:(4)}
|b|^2 + |a|^2 =1\,,\, |\arg b -\arg a |=\frac \pi 2\,.
\end{equation}
\end{defi}
In this context we can define $\epsilon (P)=\epsilon (P,P_1,P_2)$ as the infimum over the $\epsilon >0$ such that the above holds and
$$
C(P):= \max ( \frac{1}{|a(0)|},\frac{1}{|b(0)|})\,.
$$
For $\mu$ real, we get with the help of \eqref{eq:(4)},
\begin{equation} \label{eq:(4a)}
{\rm det} P_0 (a,b;x,\xi)= 2b\bar a \big(\frac{i}{b\bar a} \sin (2 \arg b(\mu)) + \cos \xi + \cos x\big)\,.
\end{equation}
As $\frac{i}{b\bar a}$ is real, this is a good indication that the operators of type $\bf 2f$ should behave like the operators of type $\bf 1f$. To be shorter we say that $P$ is of type $\bf 1f$ (or $\bf 2f$) if there exist $P_1,P_2$ s.t. the corresponding triple $(P,P_1,P_2)$ is of type $\bf 1f$ (or $\bf 2f$).

Finally , we define the $\mu$-spectrum of $P(\mu) $ and denote it by $\mu-Sp(P)$ the following set
$$
\mu-Sp(P)=\{\mu \in (-4,4)\mbox{  s.t. } 0 \in Sp(P (\mu))\}\,.
$$
Of course, if $P(\mu)= P-\mu$, we recover the usual notion of spectrum of $P$.

\begin{thm}\label{theorem3}
There exist $\epsilon_0 >0$ and functions $F: (0,1]\mapsto [1,+\infty)$, $h_0: (0,1]\mapsto (0,1]$, $\alpha: (0,1]^2 \mapsto (0,1]$ with $\alpha(\epsilon,h)$ tending to zero as $h$ tends to zero,  such that if $\epsilon \in(0,1]$ and if $P$ is of type $\bf 1f$ with $\epsilon (P)\leq \epsilon_0$, $0 < h \leq h_0(\epsilon)$, then
$$
\mu-Sp(P) \subset \bigcup_{j=-N_-}^{N_+}  J_j\,,
$$
where the $ J_j$ are disjoint closed intervals (placed in increasing order) such that, for each $j$, there exists a real affine map $\mathcal H_j: \mu \mapsto \mu'$, such that:
\begin{enumerate}
\item For $j\neq 0$, $$\mathcal H_j( J_j \cap \mu-Sp(P))=\mu'-Sp(Q)$$
where $Q$ is an operator of type $\bf 1f $ with $(\mu,h)$ replaced by $(\mu',h')$ and with $\epsilon (Q) \leq \epsilon$.
\item $$\mathcal H_0(J_0 \cap \,\mu-Sp(P))= \mu'-Sp(Q)\,,$$
where, for each $\mu'_0\in \mathcal H_0( J_0)$, $Q$ is an operator of type $\bf 2f$ with $(\mu,h)$ replaced by $(\mu'',h')$, where
$\mu'=\mu'_0 + \mu''$, and with
$$
\epsilon (Q)\leq \alpha (\epsilon,h)\,,\, C(Q)\leq F(\epsilon)\,.$$
For each fixed $\epsilon >0$, the width of $J_0$ satisfies
$$ \frac 1C h \leq |J_0| \leq Ch $$
and
$$
\frac 1C \frac{\log(\frac 1h)}{h}\leq  \frac{d \mathcal H_0}{d\mu} \leq C \frac{\log(\frac 1h)}{h}\,.
$$
The length of $J_j$ for $j\neq 0$ is in $[\frac 1C  e^{-1/Ch}, C \frac{h} {\log(\frac 1h)}]$ and the separation between $ J_j$ and $ J_{j+1}$ belongs to $[\frac{h}{C\log(\frac 1h)}, Ch]$.
\end{enumerate}
\end{thm}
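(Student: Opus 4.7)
The plan is to execute the semiclassical renormalization scheme of Helffer--Sj\"ostrand, splitting the analysis according to the size of $|\mu|$. I fix a small $\epsilon_1>0$ (of the order of $\sqrt{\epsilon(P)}$) and treat separately the ``outside'' regime $|\mu|\geq \epsilon_1$, where the characteristic set $\{p(\mu,x,\xi)=0\}$ of $P$ is a disjoint union of small analytic closed curves $U_\alpha(\mu)$ (up to translation by $2\pi \mathbb Z^2$), and the ``central'' regime $|\mu|<\epsilon_1$, where the energy surface degenerates along the segments $s(\alpha,j)$ through the saddle points $(k\pi,\ell\pi)$, $k+\ell$ odd. The outside regime will produce the bands $J_j$, $j\neq 0$, and the new type $\mathbf{1f}$ operator $Q$ on each of them; the central regime will produce the single band $J_0$ and the effective type $\mathbf{2f}$ operator.

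For the outside regime, I would follow the strategy of \cite{HS1}. For each well $U_\alpha(\mu)$, one forms a reference operator by ``filling'' every other well, applies Agmon-type exponential decay estimates in the spirit of Helffer--Sj\"ostrand semiclassical tunneling theory, and extracts a microlocal kernel element $v_\alpha$ of $P(\mu)$ concentrated near $U_\alpha(\mu)$, chosen so that the full translation symmetries force $v_{\alpha+\beta}=T_\beta v_\alpha$. Projecting the full eigen-equation onto the span of the $v_\alpha$ yields an infinite matrix $W(\mu)=(w_{\alpha,\beta}(\mu))$ whose kernel at $0$ detects $\mu\text{-Sp}(P)$. The type $\mathbf{1f}$ symmetries of $(P,P_1,P_2)$ (commutation with $T_\alpha$, $V$ and the Fourier intertwining with $\mathcal F$) imply that $w_{\alpha,\beta}$ depends only on $\alpha-\beta$ and satisfies the reflection/conjugation relations that characterize the type $\mathbf{1f}$ class. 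Via the identity \eqref{defh'}, $W$ is then unitarily equivalent up to a positive multiplicative constant to the $h'$-Weyl quantization of a new symbol that is $\mathcal O(\epsilon)$-close to $\cos\xi+\cos x-\mu'$ after the affine reparametrization $\mathcal H_j:\mu\mapsto \mu'$ dictated by the leading tunneling amplitude. The size estimates $|J_j|\in [C^{-1}e^{-1/Ch}, Ch/\log(1/h)]$ and the gap estimates $[h/(C\log(1/h)), Ch]$ follow from the standard Weyl asymptotics applied to $W$, once one notes that the Agmon distance between neighboring wells is uniformly bounded from above and below on $|\mu|\in [\epsilon_1,4-\epsilon_1]$ and tends logarithmically to zero as $|\mu|\downarrow \epsilon_1$, which accounts for the $\log(1/h)$ prefactor near the boundary of the outside regime.

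For the central regime, the plan is to implement Sj\"ostrand's Grushin reduction sketched in \cite{HS3talk}. Near the middle of each segment $s(\alpha,j)$ I would construct a one-dimensional microlocal kernel element $u_{\alpha,j}$ of $P(\mu)$, glued across each saddle by the $2\times 2$ unitary connection matrix $U(\mu)$ whose asymptotics are governed by the normal form of $P$ at a non-degenerate critical point of $\cos\xi+\cos x$ and exhibit an explicit logarithmic pre-factor in $h$. Picking biorthogonal test functions $f_{\alpha,j}$, defining $R_\pm$ as in the introduction of \cite{HS3talk}, and verifying bijectivity of the Grushin operator $\mathcal P_0$ yields an effective block $E_{-+}(\mu)$ with $0\in\mathrm{Sp}(E_{-+}(\mu))\Leftrightarrow 0\in\mathrm{Sp}(P(\mu))$. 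The symmetries of the type $\mathbf{1f}$ class translate, through this construction, into the matrix symmetries of Definition \ref{defB4}, and a direct computation of the principal symbol of $E_{-+}$ in terms of $U(\mu)$ identifies it with the symbol $P_0(a,b;x,\xi)$ of Definition \ref{defB4}, with $(a,b)$ satisfying \eqref{eq:(4)}. Selecting an affine $\mathcal H_0$ that normalizes the $\mu$-dependence of $\arg b(\mu)$ (using \eqref{eq:(4a)}) produces the desired type $\mathbf{2f}$ operator $Q$, the width estimate $|J_0|\sim h$ reflecting the $h$-wide range of $\mu$ on which $E_{-+}$ has spectrum, and the derivative bound $d\mathcal H_0/d\mu\sim \log(1/h)/h$ coming from the explicit logarithmic pre-factor in the connection matrix at the saddle.

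The main obstacle, and the reason the analysis in \cite{HS3} is substantially harder than \cite{HS1,HS2}, is the matching between the two regimes and the production of a smallness estimate $\epsilon(Q)\leq \alpha(\epsilon,h)$ that actually tends to zero with $h$ (rather than merely being controlled by $\epsilon$, which is all that is available for $j\neq 0$). This requires uniformly controlling the holomorphic extensions of the microlocal solutions $u_{\alpha,j}$ and of $U(\mu)$ across the transitional range $|\mu|\sim \epsilon_1$, merging the outside hopping description with the central Grushin description without creating spurious bands or gaps, and expanding the effective symbol of $E_{-+}$ to sufficient order to verify that its leading deviation from $P_0(a,b;x,\xi)$ is $o_{h\to 0}(1)$. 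The weaker, $\epsilon$-dependent but not $h$-small bound $C(Q)\leq F(\epsilon)$ is then admissible because the central branch, being of type $\mathbf{2f}$, will only be iterated a bounded number of times before one returns to type $\mathbf{1f}$ at the next semiclassical scale $h'$, as made precise by Theorem \ref{theoreme4} invoked in the proof of Theorem \ref{key-HS-new}.
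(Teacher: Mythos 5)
The paper does not itself prove Theorem \ref{theorem3}; it is reproduced in Appendix B as one of the main results of \cite{HS3} (announced in \cite{HS3talk}), whose proof occupies the entire 124-page memoir. So there is no ``paper's own proof'' for your proposal to be measured against. What the paper does supply, in the introductory part of Appendix B, is a narrative summary of the \cite{HS3} strategy: decomposition of the energy surface into wells $U_\alpha(\mu)$, reduction of the outside spectrum to an infinite tunneling matrix $W$ isospectral to an $h'$-Weyl operator, and a Grushin reduction near the saddle points producing $E_{-+}$ with effective symbol close to $P_0(a,b;x,\xi)$ of Definition \ref{defB4}. Your sketch is a faithful and well-informed reconstruction of exactly that strategy, and you correctly point to the genuinely hard parts: uniform control of the microlocal solutions $u_{\alpha,j}$ and of the connection matrix $U(\mu)$ through the transitional scale, matching the outside and central descriptions without creating spurious bands or gaps, and verifying that the leading deviation of $E_{-+}$ from $P_0(a,b;x,\xi)$ is $o(1)$ as $h\to 0$, which is the whole content of $\epsilon(Q)\le\alpha(\epsilon,h)$.

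However, what you have written is a plan, not a proof: each item you list under ``the main obstacle'' corresponds to several sections of detailed microlocal analysis in \cite{HS3}, none of which is actually carried out. Two further inaccuracies worth flagging. First, the threshold separating the central and outside regimes is not taken of order $\sqrt{\epsilon(P)}$; in \cite{HS2,HS3} (and in Appendix A here, where it is $\eta_1\in(0,\eta_0]$) it is a fixed small constant chosen once and for all, independent of $P$ and of $h$. Second, the band and gap estimates for $j\neq 0$ are not obtained from ``Weyl asymptotics applied to $W$'': the exponentially small widths of the $J_j$ come from direct Agmon-type estimates on the tunneling amplitudes $w_{\alpha,\beta}$, the gaps of size $\sim h$ from the $h$-spacing of the lattice of wells, and the $\log(1/h)$ corrections near the inner boundary of the outside regime from the logarithmic degeneration of the action integral at the hyperbolic critical points of the symbol. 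So the proposal captures the right architecture but, as it stands, it does not contain the estimates that make the theorem true; for those one must still go to \cite{HS3}.
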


We denote the gaps between the intervals by
$
\{ G_i: i\ne 0\}.
$
We denote the center of $J_i$ by $c_i$ and the center of $ G_i$ by $g_i$.  Then
based on our previous discussion, we summarize the results related the lengths of bands and gaps appearing in Theorem \ref{theorem3} as follows (assume $h\sim 1/a_n$):

 \begin{prop}\label{propB.6}
Let the assumptions be the same as Theorem \ref{theorem3}\footnote{Implicitly, we assume that we have shifted the spectrum of the operator in order that it takes the value $0$ at the critical point. See the beginning of  Proposition 4.4 in \cite{HS3} where some map $\mu \mapsto \mu'$ is introduced. The statement below is actually relative to some $\mu'$.}.
 Then there exist $\eta_0  >0$ and, for any $\eta_1\in (0,\eta_0]$,   $h_0>0$, $\hat C >1$
and  $M>0$ such that,  if  $0 < h \leq h_0$, one has with $\hat c =\frac{1}{\hat C}$,

1) If $J_i, G_j\subset [-4,-\eta_1  ]\cup[\eta_1  ,4]$, then
$$
\hat c \leq  - h \log |J_i| \leq  \hat C \ \ \text{ and }\ \  \ \hat c  h \leq |G_j| \leq \hat Ch\,.
$$
Moreover
$$
\frac{\hat c}h \leq \#\{i:  J_i\subset [-4,-\eta_1  ]\cup[\eta_1  ,4]\}\leq  \frac{\hat C}{h} $$

2) If $J_i, G_j\subset [-Mh,Mh]$ and $i\ne0$, then \\
$$
\frac{\hat c h}{-\log h} \leq | J_i| \leq  \frac{\hat C  h}{-\log h};\ \ \hat c \frac{h}{-\log h}|\sinh(g_j/h) | \leq  |G_j | \leq \hat C \frac{h}{-\log h}|\sinh(g_j/h) |.
$$
Moreover
$$
\hat c \log \frac 1h \leq  \#\{i:  J_i\subset [-M h,Mh]\}\leq \hat C  \log \frac 1h .
$$

3) If $J_i,  G_j\subset [-\eta_1  ,-Mh]\cup[M h,\eta_1  ]$, then\footnote{We do not try to be optimal but this is sufficient for our application.}
$$
 \hat  c  e^{-\frac{ \hat C  |c_i|}{h}}\frac{h}{-\log |c_i|} \leq |J_i| \leq  \hat C  \  e^{- \hat c  \frac{ |c_i|}{h}}\frac{h}{-\log |c_i|}  \mbox{ and }  \hat c   \frac{h}{-\log |g_j|} \leq  | G_j|\leq \hat C  \frac{h}{-\log |g_j|}.
$$

\end{prop}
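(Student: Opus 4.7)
The plan is to view Proposition \ref{propB.6} as the quantitative companion to Theorem \ref{theorem3}: the theorem supplies the structural decomposition $\mu\text{-Sp}(P) \subset \bigcup J_j$ together with crude universal bounds $|J_j| \in [e^{-1/Ch}/C,\, Ch/\log(1/h)]$, while the proposition refines those bounds according to where each band sits. The starting point is that each band $J_j$ is mapped affinely, via $\mathcal H_j$, onto an image of size bounded above and below by universal constants, so $|J_j| \asymp 1/|\mathcal H_j'|$. Every estimate in Proposition \ref{propB.6} will therefore reduce to controlling $|\mathcal H_j'|$ (or, equivalently, the derivative of the Bohr--Sommerfeld-type quantization condition) in the three regimes.

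For part 1 (outer region $|c_i| \ge \eta_1$), one is entirely in the classical Helffer--Sj\"ostrand I single-well picture: the bands correspond to the $\sim 1/h$ BS-quantized levels in each well $U_\alpha(\mu)$, gaps between consecutive levels have spacing $\sim h$ coming from the derivative of the action $S(\mu) = \oint \xi\, dx$, and the band widths $\sim e^{-d_{\text{Agmon}}(\mu)/h}$ are the tunneling amplitudes between neighboring wells; since $d_{\text{Agmon}}(\mu)$ is bounded above and below by positive constants when $|\mu| \ge \eta_1$, we get $\hat c \le -h\log|J_i| \le \hat C$ and $|G_j| \asymp h$, and the $\sim 1/h$ count follows by dividing the total length of the outer region by the typical gap. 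For part 2 (inner region $|c_i| \le Mh$), the classical analysis fails because the energy surface $\cos x + \cos \xi = \mu$ is within $\mathcal O(h)$ of the separatrix passing through the saddles $(k\pi, \ell\pi)$. Here one invokes the type $\mathbf{2f}$ reduction promised in Theorem \ref{theorem3}(2): after applying $\mathcal H_0$, the spectrum near zero is governed by the $2\times 2$ symbol $P_0(a,b,x,\xi)$ of Definition \ref{defB4}, whose determinant \eqref{eq:(4a)} is a shifted Harper symbol $\cos \xi + \cos x + (i/b\bar a) \sin(2\arg b(\mu))$. The scattering matrix at each saddle has entries with $\log(1/h)$ asymptotics, which produces the factor $h/(-\log h)$ for band widths and the characteristic $|\sinh(g_j/h)|$ weight in the gap estimate (the hyperbolic sine is the standard form of the connection coefficient across a symmetric quadratic saddle). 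The cardinality $\sim \log(1/h)$ is obtained by integrating the density of bands $\sim 1/h$ against $dg/|\sinh(g/h)|$ over $|g|\le Mh$.

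For part 3 (middle region $Mh \le |c_i| \le \eta_1$), the estimates interpolate between the two extremes: WKB across a barrier of effective height $|c_i|$ supplies the exponential factor $e^{-\hat c|c_i|/h}$, while the residual proximity to the saddle leaves the logarithmic modulation $h/(-\log|c_i|)$ on the remaining amplitude; gaps do not traverse the barrier and therefore keep only the logarithmic factor $h/(-\log|g_j|)$. The main technical obstacle is the precise inner estimate of part 2, in particular the $|\sinh(g_j/h)|$ dependence of the gaps: this requires tracking the unitary scattering matrix $U(\mu)$ at each branching point through the reduction from $P_0$ to an effective $h'$-pseudodifferential operator, and then substituting the asymptotic expansion of $a(\mu), b(\mu)$ near $\mu = 0$ (their moduli satisfy $|a|^2+|b|^2=1$ and $|\arg b - \arg a| = \pi/2$, so the shift in \eqref{eq:(4a)} is purely a displacement in the energy variable). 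Once this scattering analysis is carried through, parts 1 and 3 follow by more routine WKB/Agmon estimates, and the cardinality bounds in each part reduce to comparing total lengths with typical band$+$gap spacings.
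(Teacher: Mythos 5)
The paper does not actually prove Proposition~\ref{propB.6}: it is presented as a summary extracted from \cite{HS3}, and the text immediately preceding it says ``based on our previous discussion, we summarize the results related the lengths of bands and gaps appearing in Theorem~\ref{theorem3}.'' Your sketch, in contrast, attempts to reconstruct the underlying semiclassical derivation. At the level of mechanisms you have the right picture—Bohr--Sommerfeld quantization and Agmon tunneling in the outer region, the type $\mathbf{2f}$ reduction of Theorem~\ref{theorem3} governing the separatrix region, the $\log(1/h)$ factors and $\sinh$-type connection coefficients coming from the scattering matrix at the saddle points, and WKB interpolation in the middle region—and the three-regime decomposition is exactly the one Proposition~\ref{propB.6} uses.

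But the proposal remains a heuristic outline rather than a proof: the precise $\log(1/h)$ and $|\sinh(g_j/h)|$ factors are extracted in \cite{HS3} from a detailed microlocal transfer-matrix analysis via the Grushin problem and the asymptotics of the branching matrix $U(\mu)$, none of which is carried out here. Two of the heuristics are also not quite right. The principle $|J_j|\asymp 1/|\mathcal H_j'|$ works for the $j\neq 0$ intervals where $\mathcal H_j(J_j\cap\mu\text{-}\mathrm{Sp})$ is a full type $\mathbf{1f}$ spectrum of $O(1)$ size, but it fails for $J_0$: Theorem~\ref{theorem3} gives $|J_0|\asymp h$ and $|\mathcal H_0'|\asymp\log(1/h)/h$, so the image has size $\asymp\log(1/h)$, not $O(1)$. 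And the cardinality argument ``integrating the density of bands $\sim 1/h$ against $dg/|\sinh(g/h)|$'' is not a well-posed count; the estimate $\#\{i: J_i\subset[-Mh,Mh]\}\asymp\log(1/h)$ follows more directly from the band sizes $\asymp h/\log(1/h)$ inside an interval of length $\asymp h$ together with the gap lower bound. So the approach is in the right spirit, but as written it would not yield a self-contained proof; it essentially re-derives, informally, what the paper simply cites.
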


\begin{thm}\label{theoreme4}There exist  functions $\tilde \epsilon_0: [1,\infty)\mapsto (0,1]$, $F: (0,1]\mapsto [1,+\infty)$, $\tilde h_0: (0,1]\times [1,\infty)\mapsto(0,1]$,
$\tilde \alpha : (0,1]\times [1,\infty)\times (0,1]\mapsto(0,1]$ with $\tilde \alpha (\epsilon,C,h)$ tending to $0$ as $h\rightarrow 0$ for fixed $(\epsilon, C)$, s.t. if $0< \epsilon \leq 1$, $C\geq 1$ and $P$ is of type $\bf 2f$ with $C(P) \leq C$, $\epsilon (P)\leq \tilde \epsilon_0(C)$, $0 < h < \tilde h_0(\epsilon,C))$, then
$$
[-3,3] \cap \mu-Sp(P) \subset \bigcup_j J_j\,,
$$
where the $J_j$ are closed disjoint intervals  (placed in increasing order), and for each $j$ there exists an affine application $\mathcal H_j:\mu \mapsto \mu'$ such that we have either $(a)$ or $(b)$ (with the same change of parameters as in the previous theorem):
\begin{itemize}
\item[(a)] $$\mathcal H_j(J_j \cap \mu-Sp(P))=\mu'-Sp(Q)$$
where $Q$ is an operator of type $\bf 1f $  with $\epsilon (Q) \leq \epsilon$.
\item[(b)]
$$\mathcal H_j(J_j \cap \,\mu-Sp(P))= \mu'-Sp(Q)\,,$$
where $Q$ is an operator of type $\bf 2f$ with
$$
\epsilon (Q)\leq \tilde \alpha (\epsilon,C, h)\,,\, C(Q)\leq F(\epsilon)\,.$$
\end{itemize}
 For each fixed $(\epsilon,C)$ there exist $E >1$ such that:\\
 \begin{itemize}
  \item In case (b) the width of $J_j$  satisfies
$$
0 <  \frac 1E h \leq | J_j| \leq E h
$$
 and
$$
0 < \frac 1E \frac{\log(\frac 1h)}{h} \leq  \frac{d \mathcal H_j}{d\mu} \leq  E  \frac{\log(\frac 1h)}{h}\,.
$$
\item
In case (a), the width of $J_j$ belongs to $[\frac 1{E}  e^{-E/h}, E \frac{h} {\log(\frac 1h)}]$ and the separation between $J_j$ and $J_{j+1}$ belongs to $[\frac{h}{E\log(\frac 1h)},Eh]$.\\
The separation between two intervals of type (b) is larger than $\frac{1}{E}$.
\end{itemize}
\end{thm}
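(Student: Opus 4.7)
The plan is to reduce the analysis of the type $\mathbf{2f}$ system to a scalar semiclassical problem that closely mirrors the Harper situation treated in Theorem \ref{theorem3}, and then to transport the covering produced there back through the reduction. The key observation is \eqref{eq:(4a)}: for $\mu$ real, the determinant of the principal symbol factors as
\[
\det P_{0}(a,b;x,\xi) = 2b\bar a \Bigl(\tfrac{i}{b\bar a}\sin\bigl(2\arg b(\mu)\bigr) + \cos\xi + \cos x\Bigr),
\]
with $i/(b\bar a)$ real. Thus, up to the nowhere-vanishing factor $2b\bar a$, the characteristic variety of $P$ is governed by $\tilde\mu(\mu) + \cos x + \cos\xi$ with $\tilde\mu(\mu) := \tfrac{i}{b\bar a}\sin(2\arg b(\mu))$. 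By Definition \ref{defB4}, $\arg b(\mu)$ depends analytically on $\mu$ and is essentially affine (with slope $\approx 1$), so $\tilde\mu$ is a real-analytic change of spectral parameter whose Lipschitz and inverse-Lipschitz constants are controlled by $C(P)$.

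The first step is to build an auxiliary Grushin problem for $P(\mu)$ analogous to the one outlined in Appendix \ref{sec-ap-b}. In the energy window $\tilde\mu \in [-3, 3]$, one microlocalizes on the level curves of $\cos x + \cos\xi = -\tilde\mu$, which either decompose into disjoint closed wells $U_\alpha(\tilde\mu)$ (when $|\tilde\mu|$ is bounded away from $0$) or approach the separatrix network through the saddle points at $(k\pi,\ell\pi)$ (when $\tilde\mu$ is near $0$). Using the symmetries listed in the definition of type $\mathbf{2f}$, we build families $\{f_{\alpha,j}\}$ of microlocal test functions and a Grushin operator $\mathcal P$ whose effective Hamiltonian $E_{-+}(\mu)$ is, modulo an elliptic conjugation via $P_{1}^\ast$ or $P_{2}^\ast$, isospectral to an $h'$-pseudodifferential operator $Q(\mu')$ on $L^2(\mathbb{R})$ or $L^2(\mathbb{R};\mathbb{C}^2)$, with $h'$ given by \eqref{defh'}. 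This reduction is what converts the matrix structure into a scalar (or lower rank) problem; the self-adjointness of $P_1^*P$ and $PP_2^*$ is what makes the effective operator admissible for the next step.

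The second step is a dichotomy on the localization of $\tilde\mu$. Away from the saddle values ($|\tilde\mu| \geq \eta_1$) the analysis reduces to tunneling between isolated wells, exactly as in Wilkinson's analysis used to prove Theorem \ref{theorem3}: the effective operator is of type $\mathbf{1f}$ in the new parameter $\mu'$, with $\epsilon(Q)\le \epsilon$ provided $h$ is small enough in terms of $C(P)$. This yields case (a), and the estimates on $|J_j|$, on $d\mathcal H_j/d\mu$, and on the separation between consecutive intervals follow from the same asymptotic expansion of the tunneling coefficients as in Proposition \ref{propB.6}(1)(3) (an interval of length $\in [E^{-1}e^{-E/h},\,Eh/\log(1/h)]$, gaps of order $h/\log(1/h)$ up to $h$). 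Near the saddles ($|\tilde\mu| \leq \eta_1$) the matrix-valued parametrix at the branching points produces an effective operator that is again of type $\mathbf{2f}$, with the new $C(Q)$ now controlled \emph{only by $\epsilon$} (because the normal form at the saddle is universal once the ellipticity factor $2b\bar a$ has been absorbed), so one can take $C(Q) \le F(\epsilon)$. The contraction $\epsilon(Q) \le \tilde\alpha(\epsilon,C,h)$ comes from the smallness of the off-diagonal symbolic corrections, which are $O(h^\infty)$ modulo analytic symbols of size $O(\epsilon)$ in a fixed complex neighborhood, combined with Proposition \ref{propB.6}(2) which gives the size and spacing $\in [E^{-1}h,Eh]$ and the slope $\asymp \log(1/h)/h$ of $\mathcal H_j$.

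The main obstacle is the case (b) analysis near the saddles, where one must produce the Grushin reduction and check that the outgoing operator is genuinely of type $\mathbf{2f}$ with \emph{the same structural relations} (intertwining with $T_\alpha$, $V$, $\mathcal F T$) and with constants depending only on $\epsilon$ and not on $C(P)$. The invariance under $T_\alpha$ follows automatically from the two-dimensional lattice of saddles and the fact that the test functions $f_{\alpha,j}$ are obtained from a single $f_{0,j}$ by applying $T_\alpha$; the antisymplectic symmetry $VP = PVT^2$ is enforced by choosing $f_{0,j}$ compatibly with $V$; and the Fourier intertwining is enforced by grouping the two ``incoming'' directions at each saddle correctly into the $\mathbb{C}^2_i, \mathbb{C}^2_p$ decomposition. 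Once the structural identities are verified, the quantitative bounds $\epsilon(Q)\le \tilde\alpha(\epsilon,C,h)$, $C(Q)\le F(\epsilon)$, and the separation $\ge 1/E$ between two case (b) intervals (coming from the spacing of the zeros of $\sin(2\arg b(\mu))$ inside $[-3,3]$) close the induction and complete the proof.
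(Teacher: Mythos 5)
The statement you are proving is Theorem B.4 of the paper, which is \emph{not} actually proved here: the appendix is an extract from \cite{HS3} (and \cite{HS3talk}) that recalls the statements and the outline of the strategy, but defers all proofs to Helffer--Sj\"ostrand's memoir. Your proposal correctly reconstructs the broad shape of that argument and matches the outline the appendix gives: reading an effective scalar energy $\tilde\mu(\mu)=\frac{i}{b\bar a}\sin(2\arg b(\mu))$ off the determinant factorization \eqref{eq:(4a)}, building a microlocal Grushin problem whose effective Hamiltonian is, after the $P_1^\ast$/$P_2^\ast$ conjugation that restores self-adjointness, an $h'$-pseudodifferential operator, splitting into ``away from saddle values'' (case (a), type $\mathbf{1f}$) and ``near saddle values'' (case (b), type $\mathbf{2f}$), and locating the finitely many case-(b) intervals at the zeros of $\sin(2\arg b(\mu))$ in $[-3,3]$, which also explains the separation $\geq 1/E$.

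However, several steps in your sketch are only asserted, and at least one is a small circularity. First, you invoke Proposition~\ref{propB.6} to obtain the length, gap, and slope estimates in both cases, but Proposition~\ref{propB.6} is stated (and in \cite{HS3} proven) under the hypotheses of Theorem~\ref{theorem3}, i.e., for type $\mathbf{1f}$ operators; using it verbatim for the $\mathbf{2f}$ estimates is circular, since those estimates are precisely part of what Theorem~\ref{theoreme4} asserts. What is needed is a re-derivation of the analogous bounds for $\mathbf{2f}$ operators using \eqref{eq:(4a)} to reduce the symbol calculus to the Harper normal form, plus the observation that the several critical values are uniformly separated. Second, the crucial uniform bound $C(Q)\le F(\epsilon)$ (independent of $C(P)$) is stated as following from ``a universal normal form at the saddle once $2b\bar a$ is absorbed'', but this is where the main work in \cite{HS3} lies: one must show the branching matrix at each saddle has a fixed asymptotic model and that the contraction $\epsilon(Q)\le\tilde\alpha(\epsilon,C,h)$ occurs after one renormalization step so the constants stabilize. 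Third, the verification that the outgoing operator actually satisfies the \emph{twisted} intertwining relations of Definition~\ref{defB4} ($VP=PVT^2$, $P\mathcal F T=\mathcal F T P^\ast$, etc.) rather than the untwisted type-$\1$ relations is asserted to follow from ``choosing $f_{0,j}$ compatibly''; this is exactly the delicate bookkeeping of the $\mathbb{C}^2_i,\mathbb{C}^2_p$ decomposition, and it is the main structural difference between the first and subsequent steps. Your outline is faithful to the strategy, but to be a proof one must supply these three verifications, which is what the bulk of \cite{HS3} does.
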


 There are essentially no difference in the statements. Here are the small differences:
\begin{itemize}
\item We have a  finite number of ``critical values". In the (1f) case, we  only had one close to zero. The last statement of the theorem gives a uniform bound for the intervals of type (b). The theorem
gives also that the different intervals of type (b) have  uniformly comparable  size.
\item For each of these critical values, we have an interval of type (b).
\item Outside these $h$-neighborhoods of these ``critical  values" which are determined by $\sin (2 \arg b(\mu))\sim 0$ (see \eqref{eq:(4a)} and the explanations around)
 the statements for type (a) intervals are identical to the (1f) statement. We have just to replace the $c_i$ by the distance of the middle to the critical values.
\end{itemize}

Let $\epsilon_0>0$ as in Theorem \ref{theorem3} and let
$$
0 < h_1 \leq \min (h_0(\epsilon_0),\tilde h(\epsilon_0, F(\epsilon_0)))$$ small enough in order that if $h\in (0,h_1]$, we have
$$
\max (\alpha(\epsilon_0,h),\tilde \alpha(\epsilon_0, F(\epsilon_0),h) \leq \tilde \epsilon_0(F(\epsilon_0))\,.$$
(note that we can take the same $F$ in the two statements). Let $P$ be an $h$-pseudo-differential operator with $0<h \leq h_1$ satisfying
one of the assumptions $(I)$ or $(II)$
\begin{itemize}
\item[(I)] $P$ is of type $\bf 1f$ with $\epsilon (P)\leq \epsilon_0$\,,
\item[(II)] $P$ is of type $\bf 2f$ with $\epsilon (P) \leq \tilde \epsilon_0(F(\epsilon_0))$, $C(P) \leq F(\epsilon_0)$.
\end{itemize}
Then according to the above theorems, the $\mu$-spectrum is localized in the union of closed disjoint intervals and the analysis of the $\mu$-spectrum in each interval can be reduced, after an affine transformation $\mu \mapsto \mu'$ to the analysis of the $\mu'$-spectrum of $Q$, where $Q$ is an $h'$-pseudodifferential operator (with $h'$ satisfying  \eqref{defh'}) satisfying either $(I)$ or $(II)$.
If $0 <h' \leq h_1$, we can then iterate.
Taking into account the length of the intervals and their separation, we get
\begin{cor}
There exist  $\epsilon_0 >0$, $C_0 >0$ such that if
$$ \frac{h}{2\pi} = \frac{1}{a_1 + \frac{1}{a_2 + \frac{1}{a_3 + \ldots}}}=[a_1,a_2,a_3,\cdots]\,,
\mbox{ with } a_j\geq C_0$$ and if $P$ is an $h$-pseudodifferential operator of type $\bf 1f$, with $\epsilon (P) \leq \epsilon_0$, then $\mu-Sp(P)$ has measure $0$ and its complementary is dense in $\mathbb R$.
\end{cor}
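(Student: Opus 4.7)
The strategy is to iterate Theorems \ref{theorem3} and \ref{theoreme4} to build a nested sequence of finite disjoint coverings of $\mu\text{-}\mathrm{Sp}(P)$ by closed intervals and to prove that the Lebesgue measure of the $n$-th covering decays geometrically. Let $\epsilon_0$ be as in Theorem \ref{theorem3} and let $h_1$ be the quantity fixed just before the statement of the corollary, so that both Theorems \ref{theorem3} and \ref{theoreme4} apply with uniform admissibility constants at every scale that will appear. Choose $C_0\ge 2\pi/h_1$; the precise lower bound on $C_0$ will be dictated by the contraction estimate below.

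First I would set up the iteration. Put $h^{(1)}=h$ and for $n\ge 1$ let $h^{(n+1)}/2\pi=G(h^{(n)}/2\pi)$, so that $h^{(n)}/2\pi=[a_n,a_{n+1},\ldots]$ and hence $h^{(n)}\le 2\pi/a_n\le 2\pi/C_0\le h_1$ for every $n\ge 1$. Applying Theorem \ref{theorem3} to $P$ with parameter $h^{(1)}$ yields a finite disjoint family $\mathcal B_1=\{J_j^{(1)}\}$ covering $\mu\text{-}\mathrm{Sp}(P)$, and identifies the spectral problem on each $J_j^{(1)}$ affinely with the $\mu'$-spectrum of a new operator $Q$ of parameter $h^{(2)}$: for $j\neq 0$ the operator $Q$ is of type $\mathbf{1f}$ with $\epsilon(Q)\le\epsilon_0$, whereas for $j=0$ it is a family of type $\mathbf{2f}$ operators with $\epsilon(Q)\le\tilde\epsilon_0(F(\epsilon_0))$ and $C(Q)\le F(\epsilon_0)$ (this is precisely why $h_1$ was chosen that way). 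On every type $\mathbf{2f}$ child, Theorem \ref{theoreme4} produces a further disjoint decomposition into intervals each affinely equivalent to the $\mu'$-spectrum of a type $\mathbf{1f}$ or $\mathbf{2f}$ operator of the same parameter $h^{(2)}$, again within the admissible ranges. Iterating inductively on $n$ constructs the nested family $\{\mathcal B_n\}_{n\ge 1}$ of finite disjoint coverings of $\mu\text{-}\mathrm{Sp}(P)$.

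The heart of the proof is the following uniform contraction estimate: there exists $\theta\in(0,1)$, depending only on $C_0$, such that for every $n\ge 1$ and every parent $I\in\mathcal B_n$,
\begin{equation*}
\sum_{J\in\mathcal B_{n+1},\,J\subset I}|J|\ \le\ \theta\,|I|.
\end{equation*}
Since affine maps preserve length ratios, it suffices to verify this after the normalization provided by the theorems. From the quantitative content of Theorems \ref{theorem3} and \ref{theoreme4} (supplemented by Proposition \ref{propB.6}), the number of children of $I$ is $O(1/h^{(n+1)})$, the largest child with $j\neq 0$ has length at most $C\,h^{(n+1)}/\log(1/h^{(n+1)})$, and the exceptional middle child has length at most $C\,h^{(n+1)}$; the normalized parent length is bounded below by a fixed positive constant (in the type $\mathbf{2f}$ case this also uses the uniform lower bound on the separation between intervals of type (b)). Summing, the total normalized length of children of $I$ is at most $2C/\log(1/h^{(n+1)})\le 2C/\log(C_0/2\pi)$. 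Choose $C_0$ so large that $\theta:=2C/\log(C_0/2\pi)<1$. Writing $\lambda_n:=\bigl|\bigcup_{I\in\mathcal B_n}I\bigr|$ and summing over parents gives $\lambda_{n+1}\le\theta\,\lambda_n$, hence $\lambda_n\le\theta^n\lambda_0\to 0$. Since $\mu\text{-}\mathrm{Sp}(P)\subset\bigcap_{n\ge 1}\bigcup_{I\in\mathcal B_n}I$, this forces $|\mu\text{-}\mathrm{Sp}(P)|=0$, and density of the complement in $\mathbb R$ follows at once because any nonempty open interval has positive Lebesgue measure and so cannot lie in a set of measure zero.

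The main obstacle is to check that the uniform contraction really does persist through all levels: one has to verify that the operators produced by iteration remain in the admissible ranges of Theorems \ref{theorem3} and \ref{theoreme4}, so that the length bounds used above keep holding. This is exactly what the closed inequalities $\epsilon(Q)\le\epsilon_0$ in the $\mathbf{1f}$ case and $\epsilon(Q)\le\tilde\epsilon_0(F(\epsilon_0))$, $C(Q)\le F(\epsilon_0)$ in the $\mathbf{2f}$ case guarantee, once $h_1$ is chosen as indicated before the corollary; the arithmetic hypothesis $a_j\ge C_0$ then feeds into the length estimate to make $\theta<1$ uniformly in $n$.
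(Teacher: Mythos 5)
Your proposal is correct and fills in, along the same lines the paper intends, the argument that the paper compresses into the single phrase ``Taking into account the length of the intervals and their separation, we get.'' The iteration you set up (alternating Theorems \ref{theorem3} and \ref{theoreme4}, keeping the operators in the admissibility ranges via the choice of $h_1$, and using $a_j\ge C_0$ to keep $h^{(n)}\le 2\pi/C_0\le h_1$ at every level) is exactly the mechanism the paper describes in the paragraph preceding the corollary, and your uniform contraction estimate $\sum_{J\subset I}|J|\le\theta\,|I|$ with $\theta\sim 1/\log(C_0/2\pi)<1$ is the natural quantitative consequence of the length/separation bounds in those theorems (and Proposition \ref{propB.6}). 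One cosmetic remark: after the affine normalization the parent length is of order $1$, while the $O(1/h)$ children of type $\mathbf{1f}$ are individually as small as $e^{-1/Ch}$; your cruder bound (number of children times maximal child length) already gives $O(1/\log(1/h))$, which suffices, but the true total child measure is in fact $O(h)$, so your constant $C_0$ is not optimal — this does not affect the validity of the proof.
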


\section{A slight extension}\label{appc}

In \cite{HS3} we have solved the question of the excluded middle interval appearing in \cite{HS1} but we were only consider irrational $\alpha$ sufficiently close to $0$. We show in this appendix how one should proceed when starting of an irrational sufficiently close to a rational $\frac pq$.

   The question is then to treat the middle interval at each step in order to have the same conclusion  obtained in \cite{HS3} under the stronger condition that $\hat m =0$. This extension is announced at the end of the introduction of \cite{HS3}
   in the following way:

   {\it In \cite{HS2}, the results of \cite{HS1} were extended to the case when for some $N $ we have  $|a_j| \geq C_N(a_1,..., a_N,\epsilon_0)$ for $j\geq N+1$, but still with the same incompleteness as in \cite{HS1}.
 We believe that the techniques in the present paper rather automatically lead to a more complete Cantor structure result also in that case.
 }

 The aim of this appendix is to give a few more details about what was meant by ``automatically". We recall that in the case considered in \cite{HS3} we were starting from $ \cos x+\cos hD_x $
 and that in \cite{HS2} given some rational $\frac p q =[a_1,\cdots, a_m]$ we were
  starting from $M_{p,q} (x,hD_x)$ where $M_{p,q}(x,\xi)$ was a $q \times q$ matrix with nice properties. In particular the spectrum consists in $q$ bands at most two of them touching in the middle
   defined as the image of $q$ eigenvalues $\lambda_\ell (x,\xi,p,q)$. It has been proven by van Mouche \cite{VM}  that there are no touching band when $q$ is odd.
Each of them has a unique critical value (in its middle) $\mu_\ell(p,q)$ with saddle point structure.  In \cite{HS2} (see Proposition 5.4.1 and the reminder in Appendix A), we show that outside arbitrary small neighborhoods
   $(\gamma_\ell -\epsilon_\ell,\gamma_\ell +\epsilon_\ell)$ the spectrum is contained in a union of intervals for which the spectrum is an $h'$-operator of  type {\bf 1f}  .

   What remains is to get the conclusion of Proposition 4.4 in \cite{HS3}  in $(\gamma_\ell -\epsilon_\ell,\gamma_\ell +\epsilon_\ell)$ starting from $M_{p,q} (x,hD_x)$ instead of a general  operator of type {\bf 1f}.

   Once this is proven, the proof is identical  since we are now working either with type 1 or type 2 operators. The main remark is that for this proposition we do not need the holomorphic extension in $|\Im(x,\xi)| < 1/\epsilon$ but
    only (by choosing $\epsilon_\ell$ small enough)  the analyticity is a small neighborhood of the saddle point together with the symmetries with respect to $(x,\xi)\mapsto (-\xi,x)$ and $(x,\xi)\mapsto (x,-\xi)$. Once this is observed, we can find in \cite{HS2} the statement that there exists a analytic function $f(t) = f_0(t)+ h f_1(t)$ such that the symbol of $f(M_{p,q}(x,hD_x),h)$
     can be diagonalized by block modulo an exponentially small  contribution and  the $\ell$-th eigenvalue is close to $\cos q\xi + \cos qx $. Here the main point is
     that
     $$
     {\rm Det} (M_{p,q}(x,\xi)  -\lambda) = f_{p,q} (\lambda) + (-1)^{q+1} 2 (\cos qx +\cos q\xi)
     $$
     a formula due to Chambers \cite{Ch}.
     This implies
     $$f_{p,q} (\lambda_{\ell} (x,\xi,p,q))= (-1)^{q+1} 2 (\cos qx +\cos q\xi).$$

\begin{rem}
We observe that what we get, after a diagonalization   (modulo exponentially small terms), is a reduction to a $h$-perturbation of
$$
\cos q h D_x + \cos qx.
$$
Note that this operator is unitary equivalent with
$$
\cos ((q^2h) D_x) + \cos x\,.
$$
Hence at the price of a change of initial semiclassical parameter, we are in the situation considered in \cite{HS1,HS2}.
\end{rem}
Once proven, the variant of Proposition 4.4, the results from Sections 5 and 6 in \cite{HS3} only use the conclusion of this proposition. So the only difference is that we have to consider $q$ (instead of $1$) critical values corresponding to the saddle points of  the $\lambda_{\ell,p,q} (x,\xi)$ ($\ell=1,\cdots,q$).

We insist on the fact that this is only at the first step that we have a (small) difference in the description of the spectrum.

\end{document}